\documentclass[12pt,reqno]{amsart}
\usepackage[a4paper,margin=29mm]{geometry}
\usepackage[T1]{fontenc}
\usepackage{lmodern}
\usepackage{amsmath,amssymb,amsthm,mathtools,mathrsfs}
\usepackage{microtype}
\usepackage{needspace}
\usepackage{tikz}
\usepackage{tikz-cd}
\usetikzlibrary{arrows.meta,calc}
\usepackage{enumitem}
\usepackage{xcolor}
\usepackage[
  colorlinks=true,
  linkcolor=red,
  citecolor=blue,
  urlcolor=blue
]{hyperref}

\newtheorem{maintheorem}{Theorem}

\newtheorem{proposition}{Proposition}[section]
\newtheorem{lemma}[proposition]{Lemma}
\newtheorem{theorem}[proposition]{Theorem}
\newtheorem{definition}[proposition]{Definition}
\newtheorem{corollary}[proposition]{Corollary}

\theoremstyle{remark}

\newcommand{\C}{\mathbb C}
\newcommand{\R}{\mathbb R}
\newcommand{\Q}{\mathbb Q}
\newcommand{\Z}{\mathbb Z}
\newcommand{\M}{\mathcal M}
\newcommand{\T}{\mathcal T}
\newcommand{\GL}{\operatorname{GL}_2^+(\R)}
\newcommand{\SL}{\operatorname{SL}_2(\R)}
\newcommand{\PSL}{\operatorname{PSL}_2(\R)}
\DeclareMathOperator{\Area}{Area}
\DeclareMathOperator{\covol}{covol}

\DeclareMathOperator{\PD}{PD}

\DeclareMathOperator{\Aut}{Aut}

\setlist[enumerate,1]{label=(\roman*),ref=(\roman*),leftmargin=2.3em,
  itemsep=.35em,topsep=.45em}
\numberwithin{equation}{section}
\allowdisplaybreaks[1]

\title[Finiteness of algebraic WP
  totally geodesic curves in moduli spaces]{Finiteness of algebraic Weil--Petersson\\
  totally geodesic curves in pointed moduli spaces}
  \author[Tianzhi Hu]{Tianzhi Hu}
\address{
School of Mathematics and Statistics, Wuhan University, Wuhan, Hubei 430072, P.R. China
}
\email{hutianzhi@whu.edu.cn}

\author[Kang Zuo]{Kang Zuo}
\address{
School of Mathematics and Statistics, Wuhan University, Wuhan, Hubei 430072, P.R. China;
Shanghai Institute for Mathematics and Interdisciplinary Sciences (SIMIS), Shanghai, P.R. China;
Institut f\"ur Mathematik, Johannes Gutenberg-Universit\"at Mainz, Mainz 55099, Germany
}
\email{zuok@uni-mainz.de}
\date{}

\begin{document}

\begin{abstract}
For every pair $(g,n)$ with $2g-2+n>0$, we prove that $\M_{g,n}$
contains only finitely many closed algebraic curves
that are totally geodesic for the Weil--Petersson metric.
We show that every such curve is an arithmetic quadratic Teichm\"uller
curve and has equal heights for all marked maximal cylinders in every periodic
direction. Although there are infinitely many arithmetic
Teichm\"uller curves, we can bound the
degree of the associated torus covering by the crucial equal-height property. This leads to the desired finiteness statement.

We also prove that quadratic Teichmüller curves are characterized by the existence of a holomorphic splitting of the logarithmic cotangent Kodaira–Spencer map. This result is the analogue of Möller and Viehweg’s characterization of Kobayashi geodesics in $\mathcal A_g$.
\end{abstract}
\maketitle

\section{Introduction}

The moduli space $\M_{g,n}$ of genus $g$ smooth curves with
$n$ marked points carries two natural metrics: the
Teichm\"uller metric and the Weil--Petersson (WP) metric.
The former is a complete Finsler metric, which coincides
with the Kobayashi metric \cite{Royden}, whereas the latter
is an incomplete K\"ahler metric \cite{MasurWP}.
The study of totally geodesic subvarieties with respect
to these two metrics is a natural problem in the geometry
of moduli spaces.

Algebraic totally geodesic subvarieties for the
Teichm\"uller metric have been extensively studied.
McMullen established the rigidity of Teichm\"uller curves
\cite{McMullen}. Nevertheless, infinitely many 
Teichm\"uller curves exist in a fixed moduli space,
as illustrated by the arithmetic examples arising from
square-tiled surfaces \cite{HubertLelievre}.
In contrast, Wright proved that, for each fixed $(g,n)$,
there are only finitely many totally geodesic subvarieties
of complex dimension at least two
\cite{WrightGeodesic}.
Higher-dimensional examples not arising from covering
constructions were constructed by McMullen, Mukamel, and
Wright \cite{MMW}.

For the Weil--Petersson metric, the theory of algebraic
totally geodesic subvarieties is considerably less
developed. Recently, Serv\'an established the local rigidity of WP
totally geodesic subvarieties of $\M_{g,n}$ \cite{Servan}.
Serv\'an also asked whether every such subvariety arises from a covering construction. In contrast to the Teichmüller setting, neither a general finiteness theorem nor an example not arising from a covering construction has been established.

In this paper, we study algebraic curves in $\M_{g,n}$
that are totally geodesic with respect to the
Weil--Petersson metric. We prove the following finiteness theorem.

\begin{maintheorem}\label{thm:finiteness}
For each pair of integers $g,n\geq0$ with $2g-2+n>0$, there are only
finitely many closed algebraic WP totally geodesic curves in $\M_{g,n}$.
\end{maintheorem}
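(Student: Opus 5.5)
The plan follows the outline in the abstract, in three stages. First, show that a closed algebraic WP totally geodesic curve $C\subset\M_{g,n}$ is a Teichm\"uller curve. Second, show that this Teichm\"uller curve is arithmetic, of quadratic type, and has a rigid cylinder structure. Third, bound the degree of the associated torus cover, which leaves only finitely many candidates.

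\textbf{Step 1 (from WP geodesic to Teichm\"uller curve).} Let $f\colon X\to C$ be the universal family over a finite cover of $C$, with sections for the marked points. Compactify to $\bar f\colon \bar X\to\bar C$ with log structure along the cusps and the marked sections. The WP metric restricted to $T\M_{g,n}$ is the $L^2$ metric on $H^1(X_t,T_{X_t}(-D_t))$, which is dual to the $L^2$ metric on quadratic differentials $H^0(X_t,\omega_{X_t}^{2}(D_t))$. The second fundamental form of $C$ is then computed through the Kodaira--Spencer map
\[
\theta\colon T_{\bar C}(-\log S)\to R^1\bar f_*T_{\bar X/\bar C}(-\log).
\]
Total geodesy means that the WP orthogonal complement of the image of $\theta$ is parallel. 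Dually, we obtain a holomorphic splitting of the logarithmic cotangent Kodaira--Spencer map $\bar f_*\omega^2_{\bar X/\bar C}(D)\to\Omega^1_{\bar C}(\log S)$. I would then use the characterization announced in the abstract, the analogue of M\"oller--Viehweg: such a splitting forces the curve to be a quadratic Teichm\"uller curve, generated by a half-translation surface $(X_0,q)$. The expected mechanism is that a splitting line sub-bundle of degree equal to $\deg\Omega^1_{\bar C}(\log S)$ realizes the Arakelov equality, and this equality characterizes Teichm\"uller curves.

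\textbf{Step 2 (arithmeticity and equal heights).} On a Teichm\"uller curve $\operatorname{SL}_2(\R)\cdot(X_0,q)$ I would compare the two metrics along the disk: the WP metric must restrict to a constant multiple of the hyperbolic metric for the disk to be WP geodesic. Near each cusp, corresponding to a periodic direction with cylinder decomposition having moduli $m_i$ and heights $h_i$, use Wolpert's asymptotic expansion of the WP metric in plumbing coordinates, with pinching lengths $\ell_i\sim 2\pi^2/(m_i t)$ along the Teichm\"uller ray. Total geodesy along the curve should also force the second fundamental form to vanish in the transverse degeneration directions. Matching leading and subleading terms then forces all marked maximal cylinders to have equal heights in every periodic direction. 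Equal heights in all directions forces the relative periods to be commensurable, so the Veech group is commensurable to $\operatorname{SL}_2(\Z)$. The curve is therefore arithmetic: $(X_0,q)$, or its canonical double cover, is a torus cover branched over one point, in the spirit of Gutkin--Judge.

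\textbf{Step 3 (degree bound), which I expect to be the main obstacle.} Let $d$ be the number of squares of the square-tiled surface. The number of square-tiled surfaces with $d$ squares in a fixed stratum is finite, and only finitely many strata occur in $\M_{g,n}$. So it suffices to bound $d$ in terms of $(g,n)$. I would fix the horizontal direction and normalize so that all horizontal cylinders have the common height $h$. Then I would apply the equal-height property in other rational directions, for example under the parabolic and rotation elements of the Veech group. The aim is to show that the square-tiled surface is built from at most $N(g,n)$ cylinders, each of height $1$ and circumference bounded by the number of saddle connections, which is at most $2g-2+n$ up to a constant. This would give $d\le C(g,n)$. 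The difficulty is controlling the circumferences: equal heights in every direction must exclude long thin cylinders, and I would attempt this via a Siegel--Veech or Euler characteristic count comparing cylinder areas in two transverse directions. Together with Step 2, this yields finitely many curves and proves Theorem~\ref{thm:finiteness}.
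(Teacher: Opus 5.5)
Your three-stage architecture matches the paper's, but the two steps that carry the weight are not supplied, and the mechanism you propose for equal heights is false. Total geodesy does not force the induced WP metric to be a constant multiple of the hyperbolic metric. $\M_{1,1}$ and $\M_{0,4}$ are trivially totally geodesic in themselves, yet their WP metrics are incomplete while the hyperbolic metric is complete. So no matching of asymptotic terms against a hyperbolic metric can produce equal heights.

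\textbf{Equal heights.} What the paper uses instead is the WP-orthogonal lift $q=\beta(dt/t)$, the quadratic differential of minimal WP norm with $\langle t\partial_t,q\rangle=1$. In the pinching directions its WP norm grows like $\ell_i^3|a_i|^2$, with $\ell_i=m_i\ell+O(1)$. Minimizing therefore gives limiting nodal coefficients $a_i=c\,m_i^{-2}$ (Proposition~\ref{prop:splitting}). The same analysis shows that the splitting extends logarithmically across the cusps. You take this extension for granted in Step~1, but it is needed before any characterization phrased in terms of $\Omega^1_B(\log D)$ can be invoked. Next, $\operatorname{im}\beta$ is the tautological line of the Teichm\"uller curve, so $|a_i|\propto L_i^2$ and hence $L_i\propto m_i^{-1}$. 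Identifying the cusp monodromy with the affine multitwist gives $m_i=lh_i/L_i$, so all $h_i$ agree. Your passage from equal heights to a lattice of relative periods is right in spirit: a circle-valued transverse height function in two transverse periodic directions does it, as in Lemma~\ref{lem:relative-torus}. However, you must keep the marked points at vertices, not merely obtain a torus cover branched over one point. Otherwise the final count in $\M_{g,n}$ fails, because periodic points on arithmetic surfaces are infinite in number.

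\textbf{Degree bound.} Step~3 contains no argument, as you partly concede, and the suggested route cannot work. Cylinder circumferences are not bounded by the number of saddle connections, because saddle-connection lengths on square-tiled surfaces are not combinatorially controlled. You are missing two ingredients.
\begin{enumerate}
\item The exact height formula. On a balanced surface, the common height in the direction of a primitive $u\in\Lambda_{\mathrm{rel}}$ is exactly $A_{\mathrm{rel}}/|u|$. Hence cylinders of heights $h_1,h_2$ in two nonparallel directions force $A_{\mathrm{rel}}\ge h_1h_2$ (Lemma~\ref{lem:height}).
\item A non-divergence input. Suppose area-one balanced surfaces had degrees $N_j=A_{\mathrm{rel},j}^{-1}\to\infty$. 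Apply the Eskin--Mirzakhani--Mohammadi compactness theorem to the invariant probability measures on the closed orbits $\SL\cdot(Y_j,\omega_j)$; this gives a limit probability measure on the stratum. Near a point of its support, every surface has two transverse cylinders of height at least some $c>0$. By weak convergence, each orbit meets that neighbourhood for large $j$, giving $N_j^{-1}\ge c^2$, a contradiction.
\end{enumerate}
Without the exact height formula and this non-divergence input, no bound $d\le C(g,n)$ follows. Regular marked points also need separate treatment; the paper passes to an $S_3$-cover that turns them into zeros (Lemma~\ref{lem:mark-cover}).
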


For $\M_{0,3}$, the statement is empty. In each of the one-dimensional cases
$\M_{0,4}$ and $\M_{1,1}$ the only closed irreducible algebraic curve
is the whole moduli space. It is natural to ask whether this finiteness holds for higher-dimensional algebraic WP totally geodesic subvarieties in $\M_{g,n}$.

The key to our finiteness theorem is a structural Theorem~\ref{thm:structure} for closed algebraic WP totally geodesic curves.
For such a curve $\mathcal C$, total geodesy yields a
holomorphic splitting $\beta$ of the cotangent
Kodaira--Spencer map; see \eqref{eq:beta}. For any (pointed) curve $[(X,P)]\in\mathcal C$, the image of $\beta$ determines a quadratic
differential $q\in H^0(X,K_X^2(P))$, and we prove that $\mathcal C$ is the
associated Teichm\"uller curve. Moreover, this curve is
arithmetic, and all cylinders in every periodic direction
of $q$ have equal heights. These properties, together
with the compactness theorem of Eskin--Mirzakhani--
Mohammadi \cite[Theorem~2.3]{EMM}, lead to the finiteness
result.

We briefly recall the relevant terminology. Let
$[(X,P)]\in\M_{g,n}$, where $P=\{x_1,\ldots,x_n\}$, and
write the hyperbolic metric on $X\setminus P$ as
$\varrho(z)|dz|^2$. The WP metric is the K\"ahler metric
dual to the Hermitian pairing
\[
\langle q_1,q_2\rangle_{\mathrm{WP}}
=\int_{X\setminus P}
\frac{q_1\overline{q_2}}{\varrho(z)}\,dx\,dy
\]
on $H^0(X,K_X^2(P))$ \cite{MasurWP}.
A curve is WP totally geodesic if its tangent bundle is
preserved by the WP Levi--Civita connection.

We recall some standard terminology concerning quadratic
differentials and translation surfaces; see
\cite{Veech,MasurCylinders}.

Let $[(X,P)]\in\M_{g,n}$ and let $0\ne q\in H^0(X,K_X^2(P))$.
Thus $q$ is holomorphic away from $P$, has at most simple
poles at $P$, and satisfies $\int_X|q|<\infty$.
The triple $(X,P,q)$ is a \textbf{pointed quadratic
differential surface}. Away from the zeros of $q$ and
$P$, the local coordinates $z=\int\sqrt q$ give the
flat metric $|q|=|dz|^2$. The $\GL$-action on these
coordinates generates a Teichm\"uller disk.
If its image in $\M_{g,n}$ is an algebraic curve, we
call it a \textbf{quadratic Teichm\"uller curve}.

For nonsquare $q$, the \textbf{canonical orientation cover}
is the connected double cover $p:Y\to X$ on which
$p^*q=\omega^2$ for a holomorphic Abelian differential
$\omega$. If $q$ is a square, we take $Y=X$,
$p=\operatorname{id}$, and $\omega^2=q$.
The pair $(Y,\omega)$ is a \textbf{translation surface}:
away from the zeros $Z(\omega)$ of $\omega$, its local coordinates
$z=\int\omega$ have translation transition maps.
Throughout, unless otherwise stated, we fix the distinguished set
\begin{equation}\label{eq:distinguished}
\Sigma:=Z(\omega)\cup p^{-1}(P).
\end{equation}
We call the quadratic Teichm\"uller curve
\textbf{arithmetic} if, after applying an element of
$\GL$, the marked translation surface $(Y,\omega,\Sigma)$
admits a translation covering onto a flat torus
$\C/\Lambda$, ramified only over the origin, such that
every point of $\Sigma$ maps to the origin.
This strengthens the usual arithmeticity condition
\cite{GutkinJudge} by also requiring the marked points to map to the origin.

A \textbf{saddle connection} is a straight segment in the
flat metric joining zeros of $q$ or points of $P$,
with no such point in its interior.
A direction $\vartheta$ is called \textbf{periodic} if
the union of its saddle connections, the zeros of $q$,
and $P$ has complement consisting entirely of cylinders
of parallel closed trajectories. These cylinders are called \textbf{marked maximal cylinders}.
Each cylinder $C$ is isometric to
\[
(\R/L\Z)\times(0,h),
\]
where $L>0$ is its circumference and $h>0$ its
transverse height. A \textbf{core curve} $\gamma$ of $C$
is a simple closed geodesic in $C$,
representing a generator of $H_1(C,\Z)$.
Let $\delta$ be a straight geodesic segment perpendicular
to the closed trajectories, whose interior lies in $C$
and whose endpoints lie on the two boundary components
of the metric completion of $C$.
In the flat coordinate
$z=\int e^{-i\vartheta}\sqrt q=x+iy$,
defined modulo horizontal translations, the closed
trajectories are horizontal, and
\[
L=\left|\int_\gamma\sqrt q\right|,
\qquad
h=\left|\operatorname{Im}
\int_\delta e^{-i\vartheta}\sqrt q\right|.
\]
Thus $L$ is the flat length of a core curve, and $h$ is the transverse height of the cylinder. In a periodic direction, the cylinder boundaries consist of saddle connections meeting the zeros of $q$ or the marked points $P$.

We now state the structural theorem for algebraic
WP totally geodesic curves.

\begin{maintheorem}\label{thm:structure}
Let $\mathcal C\subset\M_{g,n}$ be a closed algebraic WP totally geodesic curve,
where $2g-2+n>0$.
Then:
\begin{enumerate}
\item\label{it:disk}
$\mathcal C$ is a quadratic Teichm\"uller curve (generated by some pointed quadratic surface).
\item\label{it:heights}
In every periodic direction of the above pointed quadratic surface,
all marked maximal cylinders have the same height.
\item\label{it:arithmetic}
$\mathcal C$ is an arithmetic quadratic Teichm\"uller curve.
\end{enumerate}
\end{maintheorem}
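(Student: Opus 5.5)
We argue in three stages, one for each part of Theorem~\ref{thm:structure}.

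\emph{Part \ref{it:disk}.} Let $f:\mathcal X\to\overline{\mathcal C}$ be the (pointed) universal family over a finite cover of the compactification, and let $\Delta$ be the boundary. The cotangent Kodaira--Spencer map is $f_*(\omega^2_{\mathcal X/\mathcal C}(\mathcal P))\to\Omega^1_{\overline{\mathcal C}}(\log\Delta)$. It is dual to the inclusion $T_{\mathcal C}\hookrightarrow T\M_{g,n}$, which is identified via WP duality with $H^0(X,K_X^2(P))^\vee$. Total geodesy makes the WP-orthogonal complement of $T\mathcal C$ parallel. Hence the WP-orthogonal projection is a holomorphic (not merely $C^\infty$) splitting $\beta:\Omega^1_{\overline{\mathcal C}}(\log\Delta)\to f_*(\omega^2(\mathcal P))$; this is the map \eqref{eq:beta}. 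At each point the image of $\beta$ is a line spanned by some $q$, and the curve's tangent direction is the Beltrami differential $\bar q/\varrho$, i.e.\ WP-dual to $q$.

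We then have to show that $\mathcal C$ is the Teichm\"uller disk generated by $q$. The plan is to mimic M\"oller--Viehweg. Because $\beta$ is a holomorphic section, $\deg\beta(\Omega^1(\log\Delta))=2g(\overline{\mathcal C})-2+|\Delta|$. The pairing $q\otimes q\mapsto$ (Kodaira--Spencer contraction) gives a nonzero map $\beta(L)^{\otimes2}\to L\otimes\ldots$. We aim to show this forces equality in an Arakelov-type inequality for the line bundle $\beta(L)$ inside $f_*\omega^2(\mathcal P)$. Equality should hold exactly when the Kodaira--Spencer image of $q$ is proportional to $\bar q/|q|$, i.e.\ when the curve is tangent to the Teichm\"uller disk of $q$ everywhere. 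A curve everywhere tangent to the Teichm\"uller line field is an integral curve of it, hence the disk projection. Algebraicity then makes it a quadratic Teichm\"uller curve.

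\emph{Part \ref{it:heights}.} With \ref{it:disk} known, the WP and Teichm\"uller tangent directions coincide. We take a periodic direction, apply the element of $\GL$ making it horizontal, and degenerate along the corresponding cusp by the horizontal stretch $\operatorname{diag}(e^{t},e^{-t})$. The cylinders pinch at rates governed by the moduli $h_i/L_i$. Since the direction is parabolic, these moduli are commensurable, and the curve approaches the boundary stratum. We compare the WP length of the Teichm\"uller ray with Wolpert's expansion of the WP metric near the boundary, which is roughly $\sum_i (d\ell_i^{1/2})^2$ for the pinching hyperbolic lengths $\ell_i\sim \pi^2 L_i/(h_i t)$-type asymptotics. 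WP total geodesy forces the ray to be a WP geodesic, and WP geodesics entering a boundary stratum pinch all curves at proportional rates in a specific way. Matching the two asymptotics, equivalently requiring that the second fundamental form vanish to leading order at the cusp, should force $h_i$ to be independent of $i$. We expect this matching to be the main obstacle: it requires controlling the error terms in Wolpert's expansion uniformly along the Teichm\"uller ray, and correctly including the marked points as cylinder boundaries. Our first attempt will be to compute $\beta$ explicitly in plumbing coordinates and demand holomorphicity at $\Delta$.

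\emph{Part \ref{it:arithmetic}.} For the arithmetic statement we pass to $(Y,\omega,\Sigma)$. Since cylinders in two transverse periodic directions (e.g.\ those of two cusps) all have equal heights, the relative periods of $\omega$ along saddle connections in these directions lie in the lattices generated by the common heights. Concretely, every saddle connection in each periodic direction has a holonomy component that is an integer combination of the common height $h$. Using two non-parallel periodic directions, we obtain that all relative periods of $\omega$ in $H_1(Y,\Sigma;\Z)$ lie in a lattice $\Lambda\subset\C$, after normalising by an element of $\GL$. The map $y\mapsto\int_{\sigma_0}^y\omega \bmod \Lambda$ then gives a translation covering $Y\to\C/\Lambda$. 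It is branched only over $0$, with all of $\Sigma$ mapping to $0$, which is exactly the arithmetic condition.
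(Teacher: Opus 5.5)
Your part~\ref{it:disk} has no working mechanism. You take for granted that the WP-orthogonal lift is defined on $\Omega^1(\log\Delta)$, but parallelism only gives it over the interior. Extending it across the cusps needs the boundary asymptotics of the WP metric: in a normalized nodal frame the normal block of the Gram matrix grows like $(-\log|t_i|)^3$, and this is what keeps the coefficients of $\beta(dt/t)$ bounded. More seriously, you never say which Arakelov inequality you mean. Moreover, $f_*\omega^{\otimes2}(\mathcal P)$ is not the Hodge bundle of a variation of Hodge structures, so the M\"oller--Viehweg equality mechanism is not available there.

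The missing idea is to pass to square roots. After a base change with $\mathcal L^{\otimes2}\simeq\pi^*\Omega^1_B(\log D)$, build the orientation double-cover family $g:\mathcal Y\to B'$. The roots $w_a$ of $\beta(a^{\otimes2})$ give a line $\lambda:\mathcal L\hookrightarrow E=(g_*\omega_{\mathcal Y/B'})^-$, split by the retraction $r$ with $a\otimes r(\xi)=\kappa'\bigl(\tau^{-1}\operatorname{Tr}(w_a\xi)\bigr)$, and one checks $\theta(\lambda(a),\eta)=\tau\,\iota(a\otimes r(\eta))$. Thus $\lambda(\mathcal L)\oplus r^\vee(\mathcal L^{-1})$ is a degree-zero Higgs subbundle of the weight-one VHS, hence a flat subvariation carrying the uniformizing metric. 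Cauchy--Schwarz in the Higgs pairing gives Poincar\'e $\le$ Teichm\"uller, and Royden together with Schwarz--Pick gives the reverse. Rigidity of holomorphic isometric disks (Earle--Kra--Krushkal, Antonakoudis) then yields the Teichm\"uller disk. The proportionality $\mu\propto\bar q/|q|$ is an output of this equality, not an input. Your closing step is also not meaningful as stated. There is no Teichm\"uller line field on $\T_{g,n}$, since the direction at each point depends on the $q$ chosen there. Pointwise tangency to $\bar q_Y/|q_Y|$ does not show that the $q_Y$ are the terminal differentials of one Teichm\"uller disk.

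Your part~\ref{it:heights} rests on a claim that does not follow from total geodesy. That property concerns geodesics of the induced WP metric on $\mathcal C$, whereas the ray $\operatorname{diag}(e^{t},e^{-t})$ is a geodesic of the Poincar\'e (equivalently Teichm\"uller) metric on $\mathcal C$, which is a different metric. Moreover, along any holomorphic curve approaching the boundary, the hyperbolic lengths of the pinching curves satisfy $\ell^{\mathrm{hyp}}_i/\ell^{\mathrm{hyp}}_j\to m_j/m_i$. So comparing pinching rates carries no information about the $h_i$. Three concrete inputs are needed.
\begin{enumerate}
\item[(a)] Put $\ell=-\log|t|$ and $\ell_i=-\log|t_i|=m_i\ell+O(1)$. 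Minimize $\bar b^{\,t}Gb$ subject to $-\pi\sum_i m_ib_i+o(1)=1$, with normal block $C_0\operatorname{diag}(\ell_i^3)$. This gives limiting double-pole coefficients $a_i=c\,m_i^{-2}$ of $\beta(dt/t)$.
\item[(b)] By part~\ref{it:disk}, $\beta(dt/t)$ spans the tautological line. So the residues of its square root on the lifted collars give circumferences $L_i\propto1/m_i$.
\item[(c)] The cusp monodromy is the affine multitwist with exponents $lh_i/L_i$, and these must equal the plumbing orders $m_i$.
\end{enumerate}
Hence $h_i=m_iL_i/l$ is constant.

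Your part~\ref{it:arithmetic}, granted part~\ref{it:heights}, is correct and even leaner than the paper's. The lattice Veech group supplies two nonparallel periodic directions $\vartheta_1,\vartheta_2$. The transverse-height function modulo $h_k$ is continuous precisely because the heights are equal, and it vanishes on $\Sigma$. Together these give $\operatorname{Im}(e^{-i\vartheta_k}\Lambda_{\mathrm{rel}})\subseteq h_k\Z$ for $k=1,2$, so $\Lambda_{\mathrm{rel}}$ is a lattice without the paper's absolute-period lemma. Phrase this via that height function rather than via saddle connections in the two directions. Also transfer equal heights to the orientation cover, where cylinders lift isometrically in pairs.
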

We state the above properties separately and in the order in which they
are proved. Using Theorem~\ref{thm:structure}\ref{it:arithmetic}, we answer the dimension-one case
of Serv\'an's question \cite[Question 1.5]{Servan}. More precisely,
Corollary~\ref{coro:WPcurve} shows that every such WP totally
geodesic curve arises from a covering construction.

In the course of proving Theorem~\ref{thm:structure}, we realized
that the argument in fact yields a new characterization of Teichm\"uller
curves. We first recall the
relevant background.

Viehweg and Zuo developed the theory of maximal
Higgs fields and used it to characterize special
families and Shimura curves through equality in
Arakelov inequalities \cite{VZ03,VZ04}.
Building on this, M\"oller characterized
Teichm\"uller curves generated by holomorphic Abelian differentials in $\M_g$ by the existence of
a rank-two Higgs subbundle with maximal Higgs field
in the weight-one variation of Hodge structures
\cite[Theorem~5.3]{Moller}.
M\"oller and Viehweg also characterized
Kobayashi geodesics in $\mathcal A_g$ by the splitting of the logarithmic cotangent map
\cite[Theorem~1.2]{MV10}.
Motivated by these results, we establish the following
characterization of quadratic Teichm\"uller curves
in $\M_{g,n}$ in terms of the splitting of the logarithmic
Kodaira--Spencer map.

We fix the notation here. Let $\mathcal C\subset\M_{g,n}$ be a closed irreducible
algebraic curve. After passing to a
finite cover of its normalization and
taking stable reduction, we obtain a stable pointed
family over a smooth projective curve $B$:
\[
f:(\mathcal X;S_1,\ldots,S_n)\longrightarrow B,
\]
and write $S=\sum_{j=1}^n S_j$ and let $D\subset B$ be the reduced divisor of singular fibers. We refer to the beginning
of Section~\ref{sec:wp} for the construction.
\begin{maintheorem}
\label{thm:log-characterization}
With the notation above, the following are equivalent:
\begin{enumerate}
\item\label{it:log-teich}
$\mathcal C$ is a quadratic Teichm\"uller curve.
\item\label{it:log-split}
The logarithmic cotangent Kodaira--Spencer map
\[
\kappa:
f_*\bigl(\omega_{\mathcal X/B}^{\otimes2}(S)\bigr)
\longrightarrow\Omega_B^1(\log D)
\]
admits a holomorphic right inverse
\begin{equation*}
\beta:\Omega_B^1(\log D)\longrightarrow
f_*\bigl(\omega_{\mathcal X/B}^{\otimes2}(S)\bigr)
\end{equation*}
satisfying $\kappa\circ\beta=\operatorname{id}$.
\end{enumerate}
\end{maintheorem}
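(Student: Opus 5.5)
The equivalence splits into two directions, and the second is the substantive one.

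For (i)$\Rightarrow$(ii), I would use the standard description of a Teichm\"uller curve. Let $\mathcal C$ be generated by $(X,P,q)$. After the finite cover and stable reduction, the family $f$ carries a tautological section
\[
q_B\in H^0\bigl(B,\,f_*(\omega_{\mathcal X/B}^{\otimes2}(S))\otimes L\bigr),
\]
where $L$ is the dual of the tautological line bundle $\mathcal O(-1)$ on the (quadratic) Teichm\"uller disk. The fibrewise value of $q_B$ is the generating differential up to scaling. Equivalently, there is a line subbundle $\mathcal L\subset f_*(\omega_{\mathcal X/B}^{\otimes2}(S))$ spanned by these differentials.

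The key computation is the pairing of $q$ with the Teichm\"uller Beltrami differential $\bar q/|q|$. This pairing gives $\int_X|q|\neq0$, so $\kappa|_{\mathcal L}$ is nowhere zero on $B\setminus D$. Near a cusp of $D$, I would use the cylinder decomposition in the parabolic direction. The degeneration pinches the core curves, and the logarithmic Kodaira--Spencer map is computed by the residues/periods along those core curves. This should show that $\kappa|_{\mathcal L}:\mathcal L\to\Omega^1_B(\log D)$ stays an isomorphism at $D$. A cleaner route is to compare degrees: Teichm\"uller curves satisfy $\deg\mathcal L=\deg\Omega^1_B(\log D)$, which is the analogue of Arakelov equality; this follows from the uniformization of $B\setminus D$ by $\mathbb H$ with $\mathcal L$ the tautological bundle. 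A nonzero map between line bundles of equal degree is an isomorphism. Then $\beta:=(\kappa|_{\mathcal L})^{-1}$ is the required splitting.

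For (ii)$\Rightarrow$(i), set $\mathcal L=\beta(\Omega^1_B(\log D))$, so $\kappa|_{\mathcal L}$ is an isomorphism. I would follow M\"oller--Viehweg. Local generators $q_t$ of $\mathcal L$ at $t\in B\setminus D$ define a holomorphic map $\Phi:\widetilde{B\setminus D}\to\mathcal Q_{g,n}$ into the bundle of quadratic differentials. It lifts the period/uniformizing map and satisfies $\kappa(q_t)\neq0$, i.e. the tangent vector of $B$ pairs nontrivially with $q_t$. The aim is to show that the Kobayashi (= Teichm\"uller, by \cite{Royden}) metric of $\M_{g,n}$ restricted to $B\setminus D$ equals the hyperbolic metric of $B\setminus D$; then $\mathcal C$ is a Kobayashi geodesic, hence a Teichm\"uller disk.

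\textbf{Main obstacle: the curvature/norm comparison.} Equip $\mathcal L$ with the $L^1$ norm $\|q\|=\int|q|$. Through $\beta$, this induces a metric on $T_B(-\log D)$ whose norm of $v$ is $\sup_{\|q\|=1}|\langle\kappa^{-1}\dots\rangle|$-type. By the Teichm\"uller dual description, this metric is bounded above by the Teichm\"uller metric. I would prove that it has Gaussian curvature $\le -4$ (in the normalization where the hyperbolic metric has curvature $-4$). This uses plurisubharmonicity of $\log\int|q_t|$ for holomorphic families $q_t$, which follows from the $\partial\bar\partial$ of $\log|q_t|$ being a current supported at zeros. The logarithmic condition at $D$ makes the metric complete, with the standard cusp behaviour. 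Ahlfors--Schwarz then gives: Teichm\"uller metric $\ge$ this metric $\ge$ hyperbolic metric. Since the Kobayashi metric of $B\setminus D$ is the hyperbolic metric and the inclusion is distance-nonincreasing, all three are equal.

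Finally, an isometric holomorphic disk for the Teichm\"uller metric is a Teichm\"uller disk, via the uniqueness of extremal Beltrami differentials and McMullen's rigidity \cite{McMullen}. Hence $\mathcal C$ is a quadratic Teichm\"uller curve. I expect the hardest part to be the curvature estimate together with the boundary control at $D$.
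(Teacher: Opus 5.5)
\textbf{The gap: the curvature step in (ii)$\Rightarrow$(i).} The problem is at the step you yourself call the main obstacle, and the mechanism you propose runs in the wrong direction.

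Write $N(t)=\int_{X_t}|\beta(dt)|$. Your metric is then $\sigma=|dt|/N$, with curvature $K_\sigma=4N^2\,\partial_t\partial_{\bar t}\log N$. Three things go wrong:
\begin{enumerate}
\item Plurisubharmonicity of $\log N$ would give $K_\sigma\ge 0$, not $K_\sigma\le -4$.
\item It is also false here, because the complex structure of $X_t$ moves. On an actual Teichm\"uller disk, $N$ is a constant multiple of $\operatorname{Im}\zeta$, whose logarithm is strictly superharmonic; that is exactly why $K_\sigma=-4$ there.
\item Even granting $K_\sigma\le -4$, the Ahlfors--Schwarz lemma gives $\sigma\le\rho_U$. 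You already have this for free from $\sigma\le\|\cdot\|_{\mathrm{Teich}}\le\rho_U$.
\end{enumerate}
What must be proved is the reverse, maximality inequality $\rho_U\le\|\cdot\|_{\mathrm{Teich}}$. No general curvature estimate supplies it. On the orientation cover, $\int|q_a|=\tau^{-1}\|w_a\|^2$ is a Hodge norm. Griffiths' formula then shows that $K_\sigma+4$ is proportional to the second-fundamental-form term of the line spanned by $w_a$ in $E^{1,0}$, minus the Cauchy--Schwarz defect between $\theta(w_a)$ and $\overline{w_a}$. Both terms are nonnegative, they enter with opposite signs, and neither vanishes a priori. So completeness plus a curvature bound in either direction is not available.

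\textbf{The missing idea, and the other direction.} What you need is a rigidity input forcing the Hodge metric on this line to be the uniformizing one. The paper gets it from nonabelian Hodge theory rather than from an $L^1$ curvature computation:
\begin{enumerate}
\item After base change and the orientation double cover, $\beta$ yields a split injection $\mathcal L\hookrightarrow E$ with retraction $r$ and $\theta(\lambda(a),\eta)=\tau\,\iota(a\otimes r(\eta))$ (Lemma~\ref{lem:factor}).
\item Hence $\lambda(\mathcal L)\oplus r^\vee(\mathcal L^{-1})$ is a saturated Higgs subbundle of degree zero. Simpson polystability splits it off as a rank-two sub-VHS carrying the pulled-back uniformizing Higgs field.
\item Uniqueness of harmonic metrics gives $\rho_U(v)=\|\theta_{\widetilde v}(w_a)\|/\|w_a\|$, where the numerator is the full dual norm.
\item Cauchy--Schwarz against an arbitrary Beltrami representative then gives $\rho_U\le\|\cdot\|_{\mathrm{Teich}}$ (Lemma~\ref{lem:metric}).
\end{enumerate}
The concluding step uses the Earle--Kra--Krushkal/Antonakoudis theorem that an infinitesimally isometric holomorphic disk is a Teichm\"uller disk; McMullen's rigidity is not the relevant input.

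In (i)$\Rightarrow$(ii), your interior argument is fine. However, the equality $\deg\mathcal L=\deg\Omega^1_B(\log D)$ is precisely the cusp statement to be proved, and it does not follow from uniformization alone. The paper obtains it from the explicit growth $\int_{X_t}|q|=\frac{AT}{2\pi}\log(1/|t|)$ in the cusp coordinate, together with Lemma~\ref{lem:polylog-extension}, which extends sections of polylogarithmic $L^1$ growth holomorphically.
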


The proof is organized as follows. In
Section~\ref{sec:wp}, WP total geodesicity yields
a holomorphic splitting $\beta$ of the Kodaira--Spencer map. Using plumbing coordinates
\cite{WolpertNodal} and the boundary asymptotics
of the WP metric
\cite{MasurWP,ObitsuWolpert,MelroseZhu},
we establish the logarithmic extension of $\beta$ and study its asymptotic behaviour. In Section~\ref{sec:hodge} and Appendix~\ref{app:algebraic-splitting}, we prove \ref{it:log-split}$\Rightarrow$\ref{it:log-teich} of Theorem~\ref{thm:log-characterization}
by realizing the pullback of the base curve's uniformizing
Higgs bundle as a rank-two Higgs factor of the weight-one
VHS arising from the orientation-cover family of the quadratic curves family.
The converse is proved directly in Section~\ref{sec:converse}. In Section~\ref{sec:degeneration}, we use cusp
monodromy and the Veech group \cite{Veech} to prove
that the absolute periods form a lattice and that
all cylinders in each periodic direction have
equal heights. In Section~\ref{sec:flat}, these
properties first yield a lattice of relative periods on the marked
orientation surface and hence prove the arithmeticity assertion of
Theorem~\ref{thm:structure}\ref{it:arithmetic}.
The compactness theorem of Eskin--Mirzakhani--Mohammadi
\cite{EMM} then gives a bound for the degree of the associated
torus coverings. This finally proves
Theorem~\ref{thm:finiteness}.

\vspace{.5cm}

\noindent\textbf{Acknowledgements:}
We would like to thank Xin L\"u for helpful suggestions and discussions on the proof of Appendix~\ref{app:algebraic-splitting}.

\vspace{.5cm}

\noindent\textbf{AI Declaration:} The authors proposed the research questions and some ideas of this paper. ChatGPT found the equal-height property as stated in Theorem~\ref{thm:structure}. AI tools were also used for proving intermediate results and polishing the writing. All
mathematical statements have been independently checked by the authors, who take full
responsibility for the content.

\section{The logarithmic extension of the WP splitting}\label{sec:wp}
Pass to a finite cover of $\M_{g,n}$ that carries a universal
family, and normalize a component lying above $\mathcal C$, denoted by $U$. The induced map $U\to \M_{g,n}$ is immersed by \cite[\S2.2, footnote~5]{Servan}.

Choose the stable-reduction base change to be unramified
over $U$; see \cite{DeligneMumford,Knudsen}, which yields a family
\[
f:(\mathcal X;S_1,\ldots,S_n)\longrightarrow B
\]
over the smooth projective completion $B$. Write $S=\sum_{j=1}^n S_j$, and let
$D\subset B$ be the reduced divisor of singular fibers,
with $U=B\setminus D$. The sections $S_1,\cdots,S_n$ are disjoint,
avoid the nodes, and remain marked on rational components.

\begin{definition}[Relative dualizing sheaf]\label{def:dualizing}
For the stable pointed family $f$, let $\omega_{\mathcal X/B}$ denote
the relative dualizing sheaf, characterized by
$f^!\mathcal O_B\simeq\omega_{\mathcal X/B}[1]$; see \cite{HartshorneDuality}. On a smooth fiber it restricts to the canonical bundle;
on a nodal fiber, its sections are differentials on the normalization
with at most simple poles at the preimages of the nodes and opposite
residues. 
\end{definition}

Consequently, $\omega_{\mathcal X/B}^{\otimes2}$ describes
stable quadratic differentials, with at most double poles at the nodes,
while $\omega_{\mathcal X/B}^{\otimes2}(S)$ also allows simple poles
along the marked sections.

Relative Serre duality identifies $f_*\bigl(\omega_{\mathcal X/B}^{\otimes2}(S)\bigr)$ with
$\bigl(R^1f_*\mathcal T_{\mathcal X/B}(-S)\bigr)^\vee$; in particular,
$f_*\bigl(\omega_{\mathcal X/B}^{\otimes2}(S)\bigr)$ is locally free of rank $3g-3+n$.

\begin{definition}[Plumbing coordinates and nodal coefficients]\label{def:plumbing}
Let $t$ be a local coordinate on $B$ with $(t=0)\in D$, and let
$x_i,y_i$ be coordinates on disjoint disks on the two branches of
the normalization at the $i$th node. A \textbf{plumbing chart} replaces
the node $x_i y_i=0$ by the annulus $x_i y_i=t_i$, where the
\textbf{plumbing coordinate} $t_i$ is a local smoothing parameter,
$t_i=0$ being the nodal fiber. Along the base curve,
\begin{equation}\label{eq:plumbing}
 t_i=t^{m_i}u_i(t),\qquad m_i\in\Z_{>0},\quad u_i(0)\ne0;
\end{equation}
$m_i$ is the order of approach to the boundary divisor. A
\textbf{stable quadratic differential} is a section of $\omega_{\mathcal X/B}^{\otimes2}(S)$. On the central fiber $t=0$, its
restriction to the two branches of the normalization has Laurent
expansions
\begin{align*}
 q&=\bigl(a_i+O(x_i)\bigr)\Bigl(\frac{dx_i}{x_i}\Bigr)^2
 \quad\text{on the $x_i$-branch};\\
 q&=\bigl(a_i+O(y_i)\bigr)\Bigl(\frac{dy_i}{y_i}\Bigr)^2
 \quad\text{on the $y_i$-branch},
\end{align*}
with the \textbf{same} coefficient $a_i$, as follows from the opposite
residue condition for sections of the dualizing sheaf, the sign
disappearing upon squaring. On each nearby smooth annulus
$x_iy_i=t_i\neq0$ these two descriptions glue into a single joint
expression
\begin{equation}\label{eq:laurent-smooth}
 q=F_i(x_i,y_i,t)\Bigl(\frac{dx_i}{x_i}\Bigr)^2,
\end{equation}
where $F_i$ is holomorphic in $(x_i,y_i,t)$ for
$|x_i|,|y_i|<\varepsilon$ and $|t|$ small, with
$F_i(0,0,0)=a_i$; this is valid because
$\frac{dx_i}{x_i}=-\frac{dy_i}{y_i}$ on the annulus, so
$\bigl(\frac{dx_i}{x_i}\bigr)^2=\bigl(\frac{dy_i}{y_i}\bigr)^2$ is a
single-valued quadratic generator. We call $a_i$ the \textbf{double-pole
coefficient}.
\end{definition}

\begin{definition}[Logarithmic Kodaira--Spencer map]\label{def:kappa}
Over $U$, define
\[
\kappa_U:f_*\bigl(\omega_{\mathcal X/B}^{\otimes2}(S)\bigr)|_U\longrightarrow\Omega_U^1,\qquad
\kappa_{U,b}(q)(v)
=\langle q,\mathrm{KS}_b(v)\rangle_b,
\]
where
$\mathrm{KS}_b:T_bU\to H^1(X_b,T_{X_b}(-P_b))$
is the pointed Kodaira--Spencer map and the pairing is Serre duality. By the logarithmic deformation theory of
stable curves, the map $\kappa_U$ extends uniquely to
\[
\kappa:f_*\bigl(\omega_{\mathcal X/B}^{\otimes2}(S)\bigr)\longrightarrow\Omega_B^1(\log D).
\]
\end{definition}

Because the WP metric is K\"ahler, its Levi--Civita connection
preserves the complex structure. Total geodesy therefore makes
$T\mathcal C$ parallel along the curve; its orthogonal complement is
parallel as well. Hence we have the splitting
\[
 \beta_U:\Omega_U^1\longrightarrow
 f_*\bigl(\omega_{\mathcal X/B}^{\otimes2}(S)\bigr)\big|_U,
 \qquad \kappa\circ\beta_U=\mathrm{id}_{\Omega^1_U},
\]
over $U$. The following proposition
extends $\beta_U$ across the boundary divisor $D$ and computes the
nodal behaviour of the extension.

\begin{proposition}\label{prop:splitting}
The splitting $\beta_U$ extends to a holomorphic splitting
\begin{align}\label{eq:beta}
 \beta:\Omega_B^1(\log D)
       \longrightarrow f_*\bigl(\omega_{\mathcal X/B}^{\otimes2}(S)\bigr).
\end{align}
With the conventions of Definition~\ref{def:plumbing}, the limiting
double-pole coefficients of $\beta(dt/t)$ satisfy
\begin{equation}\label{eq:nodal}
 a_i=c\,m_i^{-2}
\end{equation}
for a common $c\in\C^\times$. The limiting differential is nonzero on
every irreducible component of the stable fiber. Moreover, as $t\to0$, we have
\begin{equation}\label{eq:cusp-L1}
 \int_{X_t}|\beta(dt)|\asymp |t|\log(1/|t|).
\end{equation}
\end{proposition}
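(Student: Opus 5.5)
The first step is to identify $\beta_U$ explicitly. On $U$, total geodesy makes $T\mathcal C$ WP-parallel, so $\beta_U$ is the WP-orthogonal lift. For a local coordinate $t$ on $U$, $\beta_U(dt)$ is the unique quadratic differential $q_t\in H^0(X_t,K^2(P_t))$ that is WP-orthogonal to $\ker\kappa_{U,t}$ and normalized by $\kappa(q_t)=dt$. Equivalently, $q_t$ is the WP-dual of $\partial_t$: $q_t=\overline{\mu_t}\,\varrho/\|\partial_t\|^2_{\mathrm{WP}}$, where $\mu_t$ is the harmonic Beltrami differential representing $\mathrm{KS}(\partial_t)$. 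Holomorphy of $\beta_U$ follows because the orthogonal complement of $\ker\kappa_U$ is a holomorphic subbundle; it is parallel for the Chern connection, which coincides with the Levi--Civita connection since the metric is Kähler. To extend $\beta_U$ across $D$ I would use Riemann extension. It suffices to show that, in a frame of $f_*(\omega^{\otimes 2}_{\mathcal X/B}(S))$ near $t=0$ given by plumbing (Wolpert's basis: differentials supported on the thick part together with the $(dx_i/x_i)^2$-type differentials at each node), the coefficients of $\beta(dt/t)=t\,q_t$ are bounded. Once that holds, $\kappa\circ\beta=\mathrm{id}$ extends by continuity.

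The key analytic input is the boundary asymptotics of the WP metric (Masur, Wolpert, Obitsu--Wolpert, Melrose--Zhu). Along the curve $t_i=t^{m_i}u_i$, the pinching length satisfies $\ell_i\sim 2\pi^2/(m_i\log(1/|t|))$. The tangent vector $\partial_t$ is, to leading order, $\sum_i m_i t^{-1}\,t_i\partial_{t_i}$ plus bounded terms. The WP metric is asymptotically a product of the metric on the boundary stratum and the metrics $\sim |dt_i|^2/(|t_i|^2(\log|t_i|)^3)$ in the pinching directions. Dualizing, the covector to $\partial_t$ in plumbing coordinates should be, to leading order, $\sum_i c_i\,(dx_i/x_i)^2$ on the collars with
\[
c_i\propto \frac{m_i\,\|\partial_{t_i}\|^2}{\|\partial_t\|^2}\cdot t_i/t .
\]
Computing this with the asymptotics, every factor of $\log$ and of $|t|$ cancels in $t\,q_t$, and the ratio between nodes is $m_i\cdot m_i^{-3}=m_i^{-2}$. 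This gives \eqref{eq:nodal}, with common $c$ determined by the normalization $\kappa(\beta(dt/t))=dt/t$. In particular all $a_i\neq0$, so the limit differential is nonzero near every node. Since every component of a stable fiber contains a node (or the fiber is smooth), the nonvanishing on every component follows, provided I also check that the thick-part component of the limit does not cancel the nodal part. I would do this via the continuity of $q$ on compacta of each component, using $a_i\ne0$ as the boundary value of a meromorphic quadratic differential on that component with a genuine double pole.

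The $L^1$ estimate \eqref{eq:cusp-L1} is then a direct calculation. We have $\beta(dt)=t^{-1}\cdot t\beta(dt/t)\cdot t$, which is $t$ times a family converging to a differential with double poles $a_i(dx_i/x_i)^2$. The $L^1$ norm of $a(dx/x)^2$ on the annulus $|t_i|/\varepsilon<|x|<\varepsilon$ is $2\pi|a|\log(\varepsilon^2/|t_i|)\asymp m_i\log(1/|t|)$, while the thick part contributes $O(1)$. Multiplying by $|t|$ gives $|t|\log(1/|t|)$.

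The main obstacle is making the dualization precise. One needs the leading-order WP asymptotics, including the off-diagonal and thick-part terms, with enough control that the error terms are $o(1)$ relative to $1/\log$. This is needed both to get exact proportionality $a_i=c\,m_i^{-2}$ and to justify boundedness for Riemann extension. Here I would invoke the Masur/Wolpert expansion $g_{\mathrm{WP}}=\sum_i\bigl(4\pi^3\,|d\ell_i^{1/2}|^2$-type terms$\bigr)+g_{\text{stratum}}+O(\ell^{3/2})$ in Wolpert's length-twist form, where the coefficients are explicit.
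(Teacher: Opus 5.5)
Your proposal follows essentially the same route as the paper: $\beta(dt/t)$ is the WP-minimal (metric-dual) lift of $t\partial_t$, computed in a normalized nodal frame from the Masur-type pinching asymptotics so that the nodal weights come out as $m_i\cdot m_i^{-3}$. It then uses Riemann extension from boundedness, nonvanishing from $a_i\neq0$ at the nodes, and the collar $L^1$ computation, exactly as the paper does.

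There is one bookkeeping slip. $\beta(dt/t)=t^{-1}\beta(dt)=q_t/t$, not $t\,q_t$. Also, the $(dx_i/x_i)^2$-coefficient of the dual covector carries an extra factor $t_i$, coming from $dt_i\leftrightarrow-\pi^{-1}t_i(dx_i/x_i)^2$. That factor is exactly what makes the powers of $|t|$ cancel in $q_t/t$.
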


\begin{proof}
If $D=\varnothing$, then $U=B$ and \eqref{eq:beta} is merely the original
interior splitting $\beta=\beta_U$; in this case no boundary behaviour
needs to be established. So we focus on the case $D\ne\emptyset$.

\emph{The nodal coefficients.}
Let $X_0:=f^{-1}(0)$ have nodes indexed by $i=1,\ldots,k$. The
\textbf{coefficient map}
\begin{align}\label{eq:evmap}
 \nu:f_*\bigl(\omega_{\mathcal X/B}^{\otimes2}(S)\bigr)_0
 \longrightarrow\C^k,
 \qquad
 \nu(q)=(a_1(q),\ldots,a_k(q)),
\end{align}
is linear and surjective \cite{WolpertNodal}, so that
\[
 0\longrightarrow \ker\nu\longrightarrow f_*\bigl(\omega_{\mathcal X/B}^{\otimes2}(S)\bigr)_0
 \xrightarrow{\ \nu\ }\C^k\longrightarrow0 .
\]
Choose sections $q_1,\ldots,q_k$ of $f_*\bigl(\omega_{\mathcal X/B}^{\otimes2}(S)\bigr)$ near $t=0$ with
$\nu(q_i)=e_i$, and complete them to a local frame
$(q_1,\ldots,q_{3g-3+n})$ by any basis of $\ker\nu$; this is a
\textbf{normalized nodal frame}. 

For $i=1,\cdots,k$, put $\ell_i=-\log|t_i|$ and $\ell=-\log|t|$, so that along the
base
\begin{equation}\label{eq:ell}
 \ell_i=m_i\ell+O(1),\qquad \ell\to\infty .
\end{equation}
Let $G$ be the Weil--Petersson matrix for the frame chosen above, i.e. $\|\sum b_jq_j\|^2=\overline b^{\,t}Gb$. The normal--tangential block
asymptotics of \cite{MasurWP,ObitsuWolpert,MelroseZhu} in this frame
give
\begin{equation}\label{eq:gram}
 G=\begin{pmatrix}
 C_0\operatorname{diag}(\ell_i^3)+O(\ell^2)&O(1)\\
 O(1)&G_0+o(1)
 \end{pmatrix},
 \qquad C_0>0,\quad G_0>0,
\end{equation}
understood in the joint degeneration \eqref{eq:ell}.

For each $t\ne0$, the lift $\beta(dt/t)$ is the unique section $q$ of
$f_*(\omega_{\mathcal X/B}^{\otimes2}(S))$ such that
\[
 \langle t\partial_t,q\rangle=1
 \qquad\text{and}\qquad
 \|q\|_{\mathrm{WP}}\ \text{is minimal}.
\]
In logarithmic plumbing coordinates the tangent vector is
\[
 t\partial_t=\sum_i(m_i+O(t))t_i\partial_{t_i}
                +O(t)\partial_{\mathrm{thick}} .
\]Here $\partial_{\mathrm{thick}}$ denotes a tangent direction in
the remaining, nondegenerating moduli parameters.

Also, we may assume $q=\sum\limits_{i=1}^{3g-3+n} b_i q_i.$ We use the following normalization of Wolpert's plumbing-pairing
formula \cite{WolpertNodal}: if the principal part at the $i$th node is
$a_i(dx_i/x_i)^2$, then the contribution of $t_i\partial_{t_i}$ to
the pairing is $-\pi a_i+o(1)$. Thus
\[
\langle t\partial_t,q\rangle=-\pi\sum_{i=1}^k m_ib_i+o(1)=1.
\]
Minimizing the WP norm $\bar b^{\,t}Gb$ subject to this constraint and using \eqref{eq:gram} gives
\begin{equation}\label{eq:lift-coords}
 b_i=-\frac{1}{\pi}\,\frac{m_i/\ell_i^3}{\sum_{j=1}^k m_j^2/\ell_j^3}\bigl(1+O(\ell^{-1})\bigr),\qquad 1\leq i\leq k.
\end{equation}
Finally, since
$\ell_i=m_i\ell+O(1)$, \eqref{eq:lift-coords} converges to
\[
 -\frac{1}{\pi}\,\frac{m_i^{-2}}{\sum_{j=1}^k m_j^{-1}},
\]
i.e.\ the limiting nodal coefficients satisfy $a_i=c\,m_i^{-2}$ for
a common $c\in\C^\times$, as claimed.

\emph{Extension and the $L^1$ estimate.} From \eqref{eq:lift-coords} we have, as $t\to0$, each $b_i$ tends to a nonzero constant for $1\le i\le k$, while the tangential coordinates satisfy
\[
 b_\alpha=O(1),
 \qquad k<\alpha\le 3g-3+n,
\]
since the off-diagonal blocks of $G^{-1}$ are $O(\ell^{-3})$ and
$m^tG^{-1}m\asymp\ell^{-3}$. Thus in the holomorphic frame
$(q_1,\ldots,q_{3g-3+n})$ all coordinate functions of
$\beta(dt/t)$ are bounded on the punctured disk; by Riemann's
removable singularity theorem they extend holomorphically to $t=0$,
so $\beta(dt/t)$ extends to a holomorphic section of
$f_*(\omega_{\mathcal X/B}^{\otimes2}(S))$.

The identity
$\kappa\circ\beta=\mathrm{id}$, which holds on
the dense open set $U$ and both sides of which are holomorphic
logarithmic bundle maps, then holds on all of $B$.

Since every irreducible component of the connected nodal
fiber $X_0$ contains at least one node, and since the limiting
double-pole coefficient is nonzero at every node, the limiting stable quadratic differential
is nonzero on every component of $X_0$.

On the $i$th plumbing annulus $\{x_i y_i=t_i\}$,
the area element $|dx_i/x_i|^2$ has area form $d\log|x_i|\,d\theta$, where $x_i=|x_i|e^{\sqrt{-1}\theta}$. The annulus is described by
\[
 \log\frac1\varepsilon\le \log\frac1{|x_i|}\le \ell_i-\log\frac1\varepsilon,
\]
and the principal part of $\beta(dt/t)$ there is
$a_i(t)\,(dx_i/x_i)^2$ with $a_i(t)\to a_i\neq0$. Hence
\begin{equation}\label{eq:annulus-L1}
 \int_{\mathrm{ann}_i}\Bigl|a_i\Bigl(\frac{dx_i}{x_i}\Bigr)^2\Bigr|
 =2\pi|a_i|\,\ell_i+O(1).
\end{equation}
The remaining
part is uniformly bounded by \eqref{eq:laurent-smooth} and contributes
$O(1)$.

At a marked section $z=0$ (that is, in local coordinates $(z,t)$
on the total space, the image of a marked section $S_j:B\to\mathcal X$
is given by $z=0$ in each nearby fiber), $\beta(dt/t)$ has a bounded simple-pole
coefficient, so locally $|q|\le C/|z|$, and the polar area element
cancels the pole:
\begin{equation}\label{eq:mark-L1}
 \int_{|z|<\varepsilon}|q|
 \;\le\; C\int_0^\varepsilon\frac{1}{r}\,r\,dr
 \;=\; C\varepsilon .
\end{equation}
Thus the marked punctures contribute no logarithmic divergence.

Summing \eqref{eq:annulus-L1} over the $k$ nodes and using
$\ell_i\asymp\ell$,
\[
 \int_{X_t}\bigl|\beta(dt/t)\bigr|
 =\sum_{i=1}^k 2\pi|a_i|\ell_i+O(1)
 \;\asymp\;\ell,
\]
which is \eqref{eq:cusp-L1}. Finally, $\beta$ is algebraic by GAGA.
\end{proof}

\section{A characterization of the Teichm\"uller curve}\label{sec:hodge}

We first prove the implication
\ref{it:log-split}$\Rightarrow$\ref{it:log-teich} of
Theorem~\ref{thm:log-characterization} under the abstract
logarithmic splitting hypothesis, without using WP total
geodesicity or WP-specific asymptotics. Together with
Proposition~\ref{prop:splitting}, this proves
Theorem~\ref{thm:structure}\ref{it:disk}. And then in the last subsection we prove the converse implication.

\subsection{The orientation cover and the sub-VHS}\label{sec:cover}
We use the quadratic differential line determined
by $\beta$ to construct a canonical orientation cover
and, in Lemma~\ref{lem:factor}, a rank-two subvariation
of Hodge structures. Finally, we show that the associated
period metric descends to $U$ and coincides with its
Poincar\'e metric. Comparison with the Teichm\"uller
metric completes the proof.

We construct the orientation-cover family from the
logarithmic splitting $\beta$. Choose a finite map
$\pi_1:B_1\to B$ of even degree. Since every line bundle of even
degree on a smooth projective complex curve has a square root, we may
choose $\mathcal L_1$ on $B_1$ with
\[
 \mathcal L_1^{\otimes2}\simeq
 \pi_1^*\Omega_B^1(\log D).
\]

Set
\[
 f_1:(\mathcal X_1;S_1)
 :=(\mathcal X;S)\times_B B_1\longrightarrow B_1.
\]
Let $b_1:\mathcal X_1\to\mathcal X$ be the projection, so that
$S_1=b_1^*S$. Since the fibers of $f$ are nodal and hence
Gorenstein, the relative dualizing sheaf commutes with arbitrary
base change \cite[Tag~0E6R]{Stacks}. Consequently,
\[
 \omega_{\mathcal X_1/B_1}\simeq b_1^*\omega_{\mathcal X/B},
 \qquad
 b_1^*\bigl(\omega_{\mathcal X/B}^{\otimes2}(S)\bigr)
 \simeq\omega_{\mathcal X_1/B_1}^{\otimes2}(S_1).
\]
As $\pi_1$ is a finite morphism between smooth curves, it is flat.
The flat base-change theorem \cite[Tag~02KH]{Stacks} therefore yields
\[
 \pi_1^*f_*\bigl(\omega_{\mathcal X/B}^{\otimes2}(S)\bigr)
 \simeq (f_1)_*\bigl(\omega_{\mathcal X_1/B_1}^{\otimes2}(S_1)\bigr).
\]
Using this isomorphism and
$\mathcal L_1^{\otimes2}\simeq\pi_1^*\Omega_B^1(\log D)$,
the pullbacks of $\beta$ and $\kappa$ give
\[
 \beta_1:\mathcal L_1^{\otimes2}
 \longrightarrow
 (f_1)_*\bigl(\omega_{\mathcal X_1/B_1}^{\otimes2}
 (S_1)\bigr),\qquad
 \kappa_1\beta_1=\operatorname{id}.
\]
Here $\kappa_1$ denotes the base change of $\kappa$, with target
$\mathcal L_1^{\otimes2}$.
By adjunction, $\beta_1$ corresponds to a section
\[
 q_1\in H^0\!\left(\mathcal X_1,
 \omega_{\mathcal X_1/B_1}^{\otimes2}(S_1)
 \otimes f_1^*\mathcal L_1^{-2}\right).
\]
The marked points require some care: $q_1$ is permitted
to have simple poles there. Put
\[
 \mathcal M_1:=
 \omega_{\mathcal X_1/B_1}(S_1)
 \otimes f_1^*\mathcal L_1^{-1}.
\]

Then $q_1$ is a section of
$\mathcal M_1^{\otimes2}(-S_1)$, hence also of
$\mathcal M_1^{\otimes2}$. It induces an
$\mathcal O_{\mathcal X_1}$-algebra structure on
\[
 \mathcal A_1
 :=\mathcal O_{\mathcal X_1}\oplus\mathcal M_1^{-1},
\]
where multiplication on
$\mathcal M_1^{-1}\otimes\mathcal M_1^{-1}
=\mathcal M_1^{-2}$
is given by the homomorphism
\[
 q_1:\mathcal M_1^{-2}
 \longrightarrow\mathcal O_{\mathcal X_1}.
\]
The associated flat double cover is
\[
 \varpi_1:\mathcal W_1
 :=\operatorname{Spec}_{\mathcal X_1}(\mathcal A_1)
 \longrightarrow\mathcal X_1;
\]
see \cite[\S~3.5]{EsnaultViehweg} for further discussion
of cyclic covers and their properties. The defining quadratic algebra has a canonical tautological section
\[
 \vartheta_1\in H^0(\mathcal W_1,\varpi_1^*\mathcal M_1),
 \qquad \vartheta_1^{\otimes2}=\varpi_1^*q_1.
\]
For a local generator $a_1$ of $\mathcal L_1$, this gives the
square-root form
\begin{equation}\label{eq:tautological-root}
 w_{1,a_1}:=\vartheta_1\cdot(f_1\circ\varpi_1)^*a_1,
 \qquad
 w_{1,a_1}^{\otimes2}=\varpi_1^*\beta_1(a_1^{\otimes2}),
\end{equation}
where $w_{1,a_1}$ is a section of
$\varpi_1^*\omega_{\mathcal X_1/B_1}(S_1)$.

Consider the geometric generic fiber of the cover
$\varpi_1:\mathcal W_1\to\mathcal X_1$.
If $q_1$ is not a square there, its normalization is a connected
orientation double cover. If $q_1$ is a square, its normalization
consists of two copies of the geometric generic curve, corresponding
to the two choices of square root.

We now choose the finite base change and the covering family according
to the above two cases. In either
case, once the finite map $\pi_2:B'\to B_1$ specified below has been
chosen, put
\[
 \pi=\pi_1\circ\pi_2,\qquad
 \mathcal L=\pi_2^*\mathcal L_1,\qquad
 f':(\mathcal X';S')=(\mathcal X_1;S_1)\times_{B_1}B'
 \longrightarrow B',
\]
and form the base-changed double cover and its normalization:
\[
 \mathcal Y_1=\mathcal W_1\times_{B_1}B',\qquad
 \mathcal Y_2=(\mathcal Y_1)^\nu.
\]

\emph{Case 1: The nonsquare case.}
Suppose that $q_1$ is not a square on the geometric generic fiber.
Choose $\pi_2:B'\to B_1$ so that the normalized generic double cover admits
a semistable model over $B'$ on which the deck involution determined
by $q_1$ extends. Take such an equivariant normal semistable model
\[
 g:\mathcal Y\longrightarrow B',
\]
denote its involution by $\sigma$, and set
\[
 \tau=2,\qquad E:=(g_*\omega_{\mathcal Y/B'})^-.
\]
The generic cover is connected, and $\mathcal Y$ is birational
to $\mathcal Y_2$ over $B'$.

\emph{Case 2: The square case.}
Suppose that $q_1$ is a square on the geometric generic fiber.
Choose $\pi_2:B'\to B_1$ so that a square root
of the geometric generic quadratic differential is defined
over $\C(B')$. Since $\mathcal X'$ is
normal, the normalization of the base-changed double cover is
\[
 \mathcal Y_2\simeq\mathcal X'\sqcup\mathcal X'.
\]
Choose the component corresponding to one square root and put
\[
 g:\mathcal Y:=\mathcal X'\longrightarrow B',\qquad \tau=1,\qquad
 E:=g_*\omega_{\mathcal Y/B'}.
\]
The chosen component maps naturally into $\mathcal Y_2$.

In either case, $\mathcal L^{\otimes2}\simeq
\pi^*\Omega_B^1(\log D)$. The base-change gives
\[
 \beta':\mathcal L^{\otimes2}\longrightarrow
 f'_*\omega_{\mathcal X'/B'}^{\otimes2}(S'),\qquad
 \kappa':f'_*\omega_{\mathcal X'/B'}^{\otimes2}(S')
 \longrightarrow\mathcal L^{\otimes2},\qquad
 \kappa'\beta'=\operatorname{id}.
\]
The two constructions
are summarized by the same commutative diagram:
\begin{equation}\label{eq:diagram}
\begin{tikzcd}[column sep=small,row sep=small]
\mathcal Y \arrow[d,"g"'] \arrow[r,dashed] &
\mathcal Y_2 \arrow[d] \arrow[r,"\nu"] &
\mathcal Y_1 \arrow[d] \arrow[r,"\varpi"] &
\mathcal X' \arrow[d,"f'"] \arrow[r] &
\mathcal X_1 \arrow[d,"f_1"] \arrow[r] &
\mathcal X \arrow[d,"f"] \\
B' \arrow[r,equal] & B' \arrow[r,equal] & B'
\arrow[r,equal] & B' \arrow[r,"\pi_2"'] &
B_1 \arrow[r,"\pi_1"'] & B.
\end{tikzcd}
\end{equation}
Here $\varpi$ is the base change of $\varpi_1$, and $\nu$ is the
normalization map. In either case, the composite defines the
induced rational map $p:\mathcal Y\dashrightarrow\mathcal X'$,
whose generic degree is $\tau$.

Let $D'\subset B'$ be the reduced divisor consisting of
$\pi^{-1}(D)$, the interior ramification points of $\pi$, the
parameters where $g$ has singular fibers, and those where the
zero or pole multiplicities of the quadratic differentials determined
by $\beta'$ change.

The family $g:\mathcal Y\to B'$ is projective and semistable,
with normal total space and smooth generic fiber. Fix a dense open subset
$V\subset B'\setminus D'$ on which $p$ restricts to a finite
morphism $p_V:\mathcal Y_V\to\mathcal X'_V$. The square-root
form in \eqref{eq:tautological-root}, after base change,
normalization, and the natural pullback of relative differentials,
gives for every local generator $a$ of $\mathcal L|_V$ a
holomorphic one-form $w_a$ on $\mathcal Y_V$ satisfying
\begin{align}\label{eq:wa}
 w_a^{\otimes2}=p_V^*\beta'(a^{\otimes2}).
\end{align}
In the nonsquare case, $\sigma^*w_a=-w_a$. 

Over the smooth locus $B'\setminus D'$, the weight-one
polarized variation of Hodge structure of $g$ is
$R^1g_*\C$; in the nonsquare case we take its
$(-1)$-eigensummand under $\sigma$. The Higgs bundle of
its Deligne canonical extension \cite{Deligne,Schmid} is
\[
 E\oplus E^\vee,\qquad
 \theta:E\longrightarrow E^\vee\otimes\Omega_{B'}^1(\log D').
\]
Here $E=E^{1,0}$ (identified with the direct image specified in the above covering construction), and the polarization identifies $E^{0,1}$ with $E^\vee$.

We call the points of $D'\setminus\pi^{-1}(D)$
the \textbf{auxiliary interior points}. The differential of
$\pi$ induces
\[
\iota:=d\pi:
\mathcal L^{\otimes2}
\longrightarrow\Omega_{B'}^1(\log D'),
\]
which vanishes precisely at the auxiliary interior
points. Consider the rank-two Higgs bundle
\begin{align}\label{eq:pullbackuni}
\left(
\mathcal L\oplus\mathcal L^{-1},
\begin{pmatrix}
0&0\\
\iota&0
\end{pmatrix}
\right).
\end{align}
Its projectivization is the pullback of the
uniformizing Higgs bundle of $U$. 

However, we cannot expect the Higgs bundle \eqref{eq:pullbackuni} to be maximal due to the existence of the auxiliary interior points. Instead, we will establish the \textbf{compatibility} of \eqref{eq:pullbackuni}
with the Higgs bundle $(E\oplus E^\vee,\theta)$ of the
weight-one VHS of $\mathcal Y/B'$. This compatibility
is the main ingredient.

\begin{lemma}\label{lem:factor}
With the notation above, the following hold.
\begin{enumerate}
\item\label{it:existence}
There exist a saturated injection
$\lambda:\mathcal L\hookrightarrow E$
and a map $r:E\to\mathcal L$ satisfying
$r\lambda=\operatorname{id}_{\mathcal L}$.

\item\label{it:higgs}
The Higgs bundle $(E\oplus E^\vee,\theta)$ contains
the saturated degree-zero rank-two Higgs subbundle
\[
\lambda(\mathcal L)\oplus
r^\vee(\mathcal L^{-1}).
\]
More precisely, for every local section $\eta$ of
$E$ and local section $a$ of $\mathcal L$, the Higgs pairing satisfies
\begin{equation}\label{eq:theta}
\theta(\lambda(a),\eta)
=\tau\,\iota\bigl(a\otimes r(\eta)\bigr).
\end{equation}
In particular, the induced Higgs field on this
rank-two subbundle is
\[
\begin{pmatrix}
0&0\\
\tau\iota&0
\end{pmatrix}.
\]
This subbundle underlies an irreducible complex
rank-two flat subvariation $\mathbb V$, whose
Hodge bundles are isomorphic to $\mathcal L$ and
$\mathcal L^{-1}$ and whose polarization has
signature $(1,1)$.
\end{enumerate}
\end{lemma}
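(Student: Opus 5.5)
We will construct $\lambda$ from the tautological square root, obtain $r$ by dualizing the Kodaira--Spencer map, and compute the Higgs field on the pulled-back cover.

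\emph{Construction of $\lambda$.} Over $V$, for a local generator $a$ of $\mathcal L$, the form $w_a$ of \eqref{eq:wa} is a holomorphic relative one-form on the fibers of $\mathcal Y_V$. In the nonsquare case it is $\sigma$-anti-invariant. It depends linearly on $a$, so we set $\lambda(a):=w_a$. This gives an injection $\mathcal L|_V\hookrightarrow E|_V$, nonzero because $\beta'(a^{\otimes 2})\neq 0$. We then extend it to all of $B'$ as a map into the Deligne extension $E$. Away from $\pi^{-1}(D)$ the fibers are smooth up to finite branching, and $w_a$ extends holomorphically. Near $\pi^{-1}(D)$ we use Proposition~\ref{prop:splitting}: the limiting quadratic differential $\beta(dt/t)$ is nonzero on every component and has only double poles with nonzero coefficients $a_i$. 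Hence its square root is a stable relative one-form on the semistable model, with simple poles and opposite residues at the nodes, that is, a section of $g_*\omega_{\mathcal Y/B'}$. This shows that $\lambda$ is defined everywhere. Saturation follows because the limiting form is nonzero on the special fiber, so $\lambda$ does not vanish at any point.

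\emph{Construction of $r$ and the Higgs identity.} We compute $\theta$ through the Kodaira--Spencer class of $g$. For $\eta\in E$ and $\xi=\lambda(a)$, the pairing $\theta(\xi,\eta)$ equals the cup product $\langle \mathrm{KS}_g, \xi\cdot\eta\rangle$, where $\xi\cdot\eta$ is a quadratic differential on $\mathcal Y$. In the nonsquare case this product is $\sigma$-invariant, since both factors are anti-invariant. It therefore descends to $X$ as a section $p_*(\xi\eta)/\tau$ of $\omega^{\otimes2}(S)$ twisted by $\mathcal L^{-1}$ after identifying via $a$; the simple poles at $P$ come from the branching. The Kodaira--Spencer class of $g$ is the pullback of that of $f'$ composed with $d\pi$. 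Integrating over the cover multiplies by $\tau$, which gives
\[
\theta(\lambda(a),\eta)=\tau\,\iota\bigl(\kappa'(a\cdot\eta/w_a\ \text{pushed down})\bigr).
\]
We then define
\[
r(\eta):=\kappa'\bigl(\varpi_*(w_a\eta)\bigr)\otimes a^{-1}\in\mathcal L,
\]
which is well defined independently of $a$. Then $r\lambda(a)=\kappa'(\beta'(a^{\otimes2}))a^{-1}=a$, since $\kappa'\beta'=\mathrm{id}$, and \eqref{eq:theta} holds by construction. A local check at the boundary shows that $r$ is holomorphic across $D'$. For this we use the logarithmic extension of $\kappa$ together with the fact that the residues are compatible with Deligne's extension.

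\emph{Sub-Higgs bundle and flatness.} By \eqref{eq:theta}, $\theta(\lambda\mathcal L)\subset r^\vee(\mathcal L^{-1})\otimes\Omega^1_{B'}(\log D')$, so $\lambda(\mathcal L)\oplus r^\vee(\mathcal L^{-1})$ is $\theta$-stable. Its degree is $\deg\mathcal L-\deg\mathcal L=0$. By Simpson's correspondence (as used by Viehweg--Zuo and M\"oller), a saturated degree-zero Higgs subbundle of the polystable Higgs bundle of a VHS is a direct summand underlying a sub-VHS $\mathbb V$. The Hodge bundles of $\mathbb V$ are $\mathcal L$ and $\mathcal L^{-1}$, and its signature is $(1,1)$. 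Irreducibility follows because $\iota\neq0$: a rank-one flat subsystem would have to be a Hodge subbundle killed by $\theta$, which is impossible.

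The main obstacle is the boundary analysis near $D'$. There we must show that $w_a$ and $r$ extend to the Deligne extension, and that the auxiliary interior points, where $\iota$ vanishes, do not break saturation. For this we would compare the plumbing asymptotics of Proposition~\ref{prop:splitting} with the semistable model and its nilpotent-orbit description.
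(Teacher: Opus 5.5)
Your treatment of part (ii) follows the paper: the Higgs identity on $V$, the degree-zero Higgs subbundle, Simpson polystability, and irreducibility from $\iota\neq0$. Part (i), however, has a genuine gap exactly where you locate the ``main obstacle''. You never extend $r$ across $D'$, and your extension of $\lambda$ is not valid in the generality of the lemma.

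The sentence ``a local check at the boundary shows that $r$ is holomorphic'' is not an argument. Without a global $r$ with $r\lambda=\operatorname{id}$ on all of $B'$, the line $r^\vee(\mathcal L^{-1})$ is not a subbundle of $E^\vee$. So the degree-zero computation and the Simpson step in (ii) have nothing to act on.

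For $\lambda$ you invoke Proposition~\ref{prop:splitting}, which is a WP-specific statement. Section~\ref{sec:hodge} is meant to work for an arbitrary logarithmic splitting $\beta$, since it is used to prove Theorem~\ref{thm:log-characterization}. For such a $\beta$, $\kappa\beta=\operatorname{id}$ forces only $\sum_i m_ia_i\neq0$, not that every nodal coefficient is nonzero. Even in the WP case, regularity of the root is asserted rather than checked on the exceptional chains and on the new nodes of the semistable model. These arise over marked points, over colliding branch points, and at auxiliary interior points, which you dismiss as ``smooth up to finite branching''.

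The paper handles all of this uniformly:
\begin{enumerate}
\item It views the root as a rational section $\alpha$ of $\omega_{\mathcal Y/B'}\otimes g^*\mathcal L^{-1}$ with $\alpha^{\otimes2}=p^\#q'$.
\item It shows by a DVR computation at each vertical prime divisor that $p^\#q'$ has no poles. Here $p^*z$ or $p^*x$ equals $s^mv$ with $v$ a unit, and $d_{\mathcal Y/B'}s=0$.
\item It concludes by normality of $\mathcal Y$ that $\alpha$ is regular, which extends $\lambda$.
\item It constructs $r$ globally from finite duality for the flat double cover $\operatorname{Spec}\mathcal A_1$ and from trace maps on a resolution of $\mathcal Y\dashrightarrow\mathcal Y_1$, using that $\mathcal Y$ has only rational double points. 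It then identifies this $r$ with $r_V$ on $V$.
\end{enumerate}

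There are also some smaller points.
\begin{itemize}
\item Your claim that the Kodaira--Spencer class of $g$ is the pullback of that of $f'$ ignores the unmarked branch points over odd-order zeros of $q$, which move with the parameter. The claim becomes correct only after the local check that $w_a\eta$ descends regularly there. In the paper's notation, $w_a\xi=\tfrac12 z^{(k-1)/2}c_1c_2(dz)^2$ for $k\geq1$, so the correction field $\delta=O(u^{-1})\partial_u$ contributes no residue.
\item If $\varpi_*$ denotes the trace, your $r$ satisfies $r\lambda=\tau\operatorname{id}$, so the factor $1/\tau$ is missing.
\item Once $\lambda$ is extended, your own Higgs identity gives much of the extension of $r$ for free. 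The field $\theta$ is holomorphic on the Deligne extension with logarithmic poles, and $\iota$ is an isomorphism at the points of $\pi^{-1}(D)$. Hence $a\otimes r(\eta)=\tau^{-1}\iota^{-1}\theta(\lambda(a),\eta)$ extends $r$ there. Only the auxiliary interior points, where $\iota$ vanishes, would still need a separate argument.
\end{itemize}
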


\begin{proof}
\emph{\ref{it:existence}.}
Work over the open set $V$ fixed above, and put $E_V=E|_V$.
For a local generator $a$ of $\mathcal L|_V$, let $w_a$ be the
holomorphic relative one-form \eqref{eq:wa} obtained from the tautological
root \eqref{eq:tautological-root}. These forms give a well-defined bundle map
\[
 \lambda_V:\mathcal L|_V\longrightarrow E_V,
 \qquad \lambda_V(a)=w_a.
\]

For a local section $\xi$ of $E_V$, define $r_V(\xi)$ by
\begin{equation}\label{eq:retraction}
 a\otimes r_V(\xi)
 =\kappa'\!\left(
  \frac1\tau\operatorname{Tr}_{p_V}(w_a\xi)\right).
\end{equation}
In the square case the trace is the identity. In the nonsquare
case both $w_a$ and $\xi$ are anti-invariant with respect to the involution, so their product
is invariant and descends as a rational quadratic differential.
It has at most simple poles along the marked sections. Indeed,
at a branch point over a zero of odd order $k\ge1$, or over a
marked simple pole ($k=-1$), take an \'etale-local coordinate
$z=u^2$ and write
\[
 \beta'(a^{\otimes2})=z^k c(z)(dz)^2,
 \qquad
 w_a=2u^{k+1}c_1(u^2)\,du,\qquad
 \xi=c_2(u^2)\,du,
\]
where $c_1^2=c$ and $c_1,c_2$ are regular. Then
\[
 w_a\xi=\tfrac12 z^{(k-1)/2}c_1(z)c_2(z)(dz)^2,
\]
which is regular for $k\ge1$ and has at most a simple pole
for $k=-1$. Away from ramification, the descent is regular
because $p_V$ is \'etale. Thus the normalized trace in
\eqref{eq:retraction} belongs to
$f'_*\omega_{\mathcal X'/B'}^{\otimes2}(S')|_V$.
Moreover, replacing $a$ by $ha$ multiplies this trace by $h$,
so \eqref{eq:retraction} is independent of the generator $a$
and defines an algebraic bundle map
$r_V:E_V\to\mathcal L|_V$.
Finally, the defining square-root identity yields
\[
 a\otimes r_V\lambda_V(a)
 =\kappa'\!\left(\frac1\tau
   \operatorname{Tr}_{p_V}(w_a^{\otimes2})\right)
 =\kappa'\beta'(a^{\otimes2})=a^{\otimes2}.
\]
Therefore $r_V\lambda_V=\operatorname{id}_{\mathcal L|_V}$. The extension of $\lambda_V$ and $r_V$ to $B'$ is proved in Lemma~\ref{lem:algebraic-splitting} in
Appendix~\ref{app:algebraic-splitting}. Their splitting identity extends by density; hence
$\lambda$ is a saturated split injection.

\emph{\ref{it:higgs}.}
Fix $b'\in V$, let $p_{b'}:Y_{b'}\to X'_{b'}$ be the fiberwise covering map, and choose a local generator $a$ of $\mathcal L$.
Put $w_a=\lambda_V(a)$. For a tangent vector
$v\in T_{b'}V$, let $\mu_v$ represent the Kodaira--Spencer
class of $g$ on $Y_{b'}$. The Higgs pairing is
\[
\theta_v(w_a,\xi)
=\int_{Y_{b'}}\mu_v\,w_a\xi,
\qquad \xi\in E_{b'}.
\]
Away from the ramification points of $p_{b'}$, this pairing
descends via the trace map. We check that moving branch
points contribute no additional terms.

At a ramification point lying over an odd-order zero
$z_0(s)$ of $\beta'(a^{\otimes2})$, of order $k\ge1$,
choose a local parameter $s$ on $V$ and relative coordinates
$z$ on $\mathcal X'$ and $u$ on $\mathcal Y$ such that
\[
z-z_0(s)=u^2.
\]
Then $w_a=O(u^{k+1})\,du$. Differentiating the covering
relation shows that the motion of $z_0(s)$ contributes
a meromorphic correction vector field
$\delta=O(u^{-1})\partial_u$. Since $\xi$ is holomorphic,
\[
\delta\lrcorner(w_a\xi)=O(u^k)\,du,
\qquad
\operatorname{Res}_{u=0}
\bigl(\delta\lrcorner(w_a\xi)\bigr)=0.
\]
Thus such moving branch points contribute no residues.
At a branch point lying over a marked simple pole,
a relative coordinate centered at the marked section
makes its branch value fixed. Its motion is already
included in the pointed Kodaira--Spencer class, so no
additional correction is needed.

Consequently, over $V$ the Higgs pairing agrees with
the pointed Kodaira--Spencer pairing after trace:
\[
\theta_v(w_a,\xi)=
\bigl[\iota\bigl(\kappa'(
\operatorname{Tr}_{p_{b'}}(w_a\xi))\bigr)\bigr](v).
\]
By \eqref{eq:retraction}, this equals
$\tau[\iota(a\otimes r_V(\xi))](v)$, proving
\eqref{eq:theta} on $V$. Since $\lambda$ and $r$
extend holomorphically, the identity extends across
$B'\setminus V$ as an identity of logarithmic
Higgs-bundle maps.

By \eqref{eq:theta} and
$r\lambda=\operatorname{id}$,
\[
 \lambda(\mathcal L)\oplus
 r^\vee(\mathcal L^{-1})
\]
is a saturated rank-two Higgs subbundle:
the first line maps into the second tensored
with $\Omega_{B'}^1(\log D')$, and the Higgs
field vanishes on the second. Its degree is zero.

The ambient polarized VHS has a polystable
tame logarithmic Higgs bundle of parabolic
degree zero \cite{Simpson90,Simpson92};
unipotent monodromy gives zero parabolic
weights. The equality case of Higgs
semistability makes this degree-zero saturated
subbundle a harmonic orthogonal direct summand,
hence a flat subvariation. Its construction
respects the Hodge grading, and its polarization
has signature $(1,1)$.

Semistability applied to the Higgs line
$r^\vee(\mathcal L^{-1})$ gives
$\deg\mathcal L\ge0$. If equality held,
this line would split off as a flat factor,
forcing the nilpotent Higgs field of the
rank-two summand to vanish, contrary to the
generic nonvanishing of $\iota$.
Thus $\deg\mathcal L>0$. Since the only
generic Higgs-invariant line of this nonzero
nilpotent rank-two system is
$r^\vee(\mathcal L^{-1})$, of negative degree,
the system is stable. Simpson's correspondence
therefore gives the irreducible complex
rank-two flat subvariation $\mathbb V$.
This proves \ref{it:higgs}.
\end{proof}

\subsection{Comparison with the Teichm\"uller metric}\label{sec:periodmetric}

On each smooth fiber $Y_{b'},\ b'\in B'$ we use
\[
 \|\omega\|^2
 =\frac{i}{2}\int_{Y_{b'}}\omega\wedge\overline\omega
 =\int_{Y_{b'}}|\omega|^2,
\]
and the dual norm on $E_{b'}^\vee$. 

\begin{lemma}\label{lem:metric}
The pullback of the Teichm\"uller metric to $U$
coincides with its curvature-$(-4)$ Poincar\'e metric.
\end{lemma}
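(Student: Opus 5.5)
We compare three Finsler/Hermitian metrics on $U$ (after pulling back to the open set $V\subset B'$): the period metric induced by the rank-two subvariation $\mathbb V$ of Lemma~\ref{lem:factor}, the Poincar\'e metric of $U$, and the Teichm\"uller metric restricted to $\mathcal C$. We first show that the period metric coincides with the Poincar\'e metric. Then we show that, for the tangent direction dual to $\beta$, the Teichm\"uller norm equals the period norm.

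\emph{Step 1: the period map of $\mathbb V$.} Since $\mathbb V$ is an irreducible rank-two complex VHS of signature $(1,1)$ with nonzero Higgs field $\tau\iota$, its period map sends the universal cover of $B'\setminus D'$ to the disk $\mathbb H$. For a local generator $a$ of $\mathcal L$, the norm of $\tau\iota(a\otimes\cdot)$ measured with the Hodge metric $\|\lambda(a)\|^2$ on $\mathcal L$ and the dual metric on $\mathcal L^{-1}$ is the pullback of the curvature-$(-4)$ metric on $\mathbb H$, suitably normalized. Here $\mathcal L\simeq \lambda(\mathcal L)$ carries the metric $\int_Y|w_a|^2=\tau\int_{X'}|\beta'(a^{\otimes 2})|$. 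We then show that this period map descends to $U$ and is a local isometry onto a quotient of $\mathbb H$. The key inputs are the following. The map $\iota=d\pi$ is the pullback of the identity of $\Omega^1_B(\log D)$, so the auxiliary interior points are only ramification of $\pi$ and disappear on $U$. The monodromy of $\mathbb V$ at points of $\pi^{-1}(D)$ is unipotent and nontrivial: by \eqref{eq:cusp-L1}, $\|\lambda(a)\|^2\asymp |t|^2\log(1/|t|)$ for $a$ corresponding to $dt$, which is the Poincar\'e cusp growth. Applying Schwarz--Pick (Ahlfors) in both directions, the period metric is bounded by the Poincar\'e metric of $U$. A metric of constant curvature $-4$ with cusp asymptotics at every puncture that is dominated by the complete Poincar\'e metric must equal it, by the maximum principle applied to the log of the ratio (Yau--Schwarz with equality at the cusps). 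Alternatively, degree counting via $\deg\mathcal L=\tfrac12\deg\Omega^1_B(\log D)\cdot\deg\pi$ (Arakelov equality) forces the Higgs field $\tau\iota$ to be maximal away from auxiliary points, which yields the same conclusion.

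\emph{Step 2: comparison with Teichm\"uller.} The Teichm\"uller norm of a tangent vector $v$ is $\sup_{\|q\|_{L^1}=1}|\kappa(q)(v)|$. Since $\kappa\beta=\mathrm{id}$, taking $q=\beta(dt)/\|\beta(dt)\|_{L^1}$ gives the lower bound $\|v\|_T\ge |dt(v)|/\|\beta(dt)\|_{L^1}$. The right-hand side, with $\|\beta(dt)\|_{L^1}=\tau^{-1}\|w\|^2$, is exactly the Hodge-theoretic period norm of Step 1. So the Teichm\"uller metric dominates the Poincar\'e metric. On the other hand, the Teichm\"uller metric equals the Kobayashi metric \cite{Royden}, and by the distance-decreasing property applied to the inclusion $U\to\M_{g,n}$ it is at most the Kobayashi metric of $U$, which is the Poincar\'e metric. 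Hence equality holds.

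The main obstacle is Step 1, specifically nailing down the constant normalization relating the Hodge norm of $\tau\iota$ to curvature $-4$. It also requires showing that the period metric is complete at cusps and nondegenerate at the auxiliary points, so that the rigidity argument applies. I would first try the Arakelov-equality/degree route, since Lemma~\ref{lem:factor} already gives $\deg\mathcal L>0$ and the computation of $\deg\mathcal L$ via $\mathcal L^{2}\simeq\pi^*\Omega^1_B(\log D)$ is explicit.
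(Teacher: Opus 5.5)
\textbf{There is a genuine gap in Step~1.} It sits exactly at the constant you flag, and neither degree counting nor a maximum principle can settle it.

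Take $a^{\otimes2}=\pi^*dt$. By \eqref{eq:theta} and $\|w_a\|^2=\tau\int|\beta(dt)|$, your quantity is
\[
\frac{|dt(v)|}{\|\beta(dt)\|_{L^1}}=\frac{|\theta_{\widetilde v}(w_a,w_a)|}{\|w_a\|^2}.
\]
So you measure the Higgs field with $h=\|\lambda(\cdot)\|^2$ on $\mathcal L$ and with $h^{-1}$ on $\mathcal L^{-1}$. The period map of $\mathbb V$ instead computes the invariant metric with the actual Hodge norm on $\mathbb V^{0,1}=r^\vee(\mathcal L^{-1})\subset E^\vee$, namely $\|\theta_{\widetilde v}(w_a)\|_{E^\vee}/\|w_a\|$.

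Since $a^{-1}(r(\lambda(a)))=1$, one only has
\[
K:=\|\lambda(a)\|\,\|r^\vee(a^{-1})\|_{E^\vee}\geq1.
\]
Equality holds iff $\ker r\perp\lambda(\mathcal L)$, that is, iff $\theta_{\widetilde v}(w_a)\in\C\,\overline{w_a}$, that is, iff $\mathbb V$ is real.

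Uniqueness of the harmonic metric forces the Hodge metric on $\mathbb V$ to be $K(h_0\oplus h_0^{-1})$ with $K$ constant, where $h_0(a,a)=\tau|c|/\rho_U$ for $a^{\otimes2}=c\,\pi^*dt$. Hence your metric equals $\rho_U/K$. It is complete and has the correct cusp shape, but its curvature is $-4K^2$.

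You cannot assume $K=1$. Lemma~\ref{lem:factor} only produces a complex subvariation, and the paper deduces the reality of $\mathbb V$ from Lemma~\ref{lem:metric}. None of your proposed remedies sees $K$:
\begin{itemize}
\item the Arakelov count is topological, and is automatic here since $\iota=d\pi$;
\item Ahlfors--Schwarz only gives $\rho_U/K\le\rho_U$;
\item the Yau-type rigidity presupposes curvature exactly $-4$.
\end{itemize}
So Steps~1 and~2 use different metrics on $\mathcal L^{-1}$, and Step~2 as written only yields $\|v\|_{\mathrm{Teich}}\ge\rho_U(v)/K$. Separately, by \eqref{eq:cusp-L1} one has $\|\lambda(a)\|^2\asymp|t|\log(1/|t|)$, not $|t|^2\log(1/|t|)$.

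\textbf{How to repair it.} Test the Teichm\"uller norm not only with $\beta(dt)$, which pairs $w_a$ with itself, but with $q_\eta:=\tau^{-1}\operatorname{Tr}_{p}(w_a\eta)$ for every $\eta\in E_{b'}$.
\begin{itemize}
\item By \eqref{eq:retraction} and \eqref{eq:theta}, $\kappa(q_\eta)(v)=\tau^{-1}\theta_{\widetilde v}(w_a,\eta)$.
\item Cauchy--Schwarz on $Y$ gives $\|q_\eta\|_{L^1}\le\tau^{-1}\|w_a\|\,\|\eta\|$.
\item Hence $\|v\|_{\mathrm{Teich}}\ge\|\theta_{\widetilde v}(w_a)\|_{E^\vee}/\|w_a\|$.
\end{itemize}
This ratio is invariant under rescaling the harmonic metric. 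Uniqueness of the adapted metric therefore identifies it with $\rho_U(v)$ without computing $K$, after checking zero parabolic weights at the cusps and at the auxiliary interior points.

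This is the paper's bound \eqref{eq:pairing-bound}. Your Royden/Kobayashi upper bound matches its Schwarz--Pick step. The fact $K=1$ then emerges only afterwards, as the equality case of Cauchy--Schwarz; that is how the paper proves $\mathbb V$ is real.
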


\begin{proof}
Write the Poincar\'e metric locally as
$\rho_U(t)|dt|$, with $\rho_U(v)$ denoting the
norm of any vector $v$. By Lemma~\ref{lem:factor}, the Higgs
model of $\mathbb V$ is
$\mathcal L\oplus\mathcal L^{-1}$ with lower-left
entry $\tau\iota$. Its uniformizing harmonic metric
is $h_0\oplus h_0^{-1}$, where
\[
 h_0(a,a)=\frac{\tau|c|}{\rho_U(t)},
 \qquad a^{\otimes2}=c\,\pi^*dt.
\]
In frames with $a^{\otimes2}=\pi^*(dt/t)$ at cusps,
$h_0(a,a)\asymp\log(1/|t|)$; in frames with
$a^{\otimes2}=\pi^*dt$ at auxiliary interior points,
it is bounded above and below. Thus the filtered
extensions have zero parabolic weights, matching
the canonical extension of $\mathbb V$.
The Hodge metric restricts harmonically to $\mathbb V$;
uniqueness of adapted metrics \cite{Simpson90}
identifies their projective metrics.

Fix $b'\in V$, put $t=\pi(b')$, choose a local
generator $a$ of $\mathcal L$, and write
$w_a=\lambda(a)|_{Y_{b'}}$. For $v\in T_tU$ and
its lift $\widetilde v\in T_{b'}B'$, the comparison
gives
\[
 \rho_U(v)
 =\frac{\|\theta_{\widetilde v}(w_a)\|}{\|w_a\|}.
\]
Here the numerator uses the dual Hodge norm.
For any Beltrami representative $\mu$ of $v$ on
$X_t\simeq X'_{b'}$, the preceding Higgs-pairing
calculation gives, for $\eta\in E_{b'}$,
\begin{equation}\label{eq:pairing-bound}
 |\theta_{\widetilde v}(w_a,\eta)|
=\frac{1}{\tau}\left|
   \int_{X'_{b'}}\mu\,
   \operatorname{Tr}_{p_{b'}}(w_a\eta)
   \right|
 \leq\|\mu\|_\infty\|w_a\|\|\eta\|.
\end{equation}
Taking the dual norm and the infimum over $\mu$
yields
\begin{equation}\label{eq:upper}
 \rho_U(v)\leq\|v\|_{\mathrm{Teich}}.
\end{equation}
Schwarz--Pick gives the reverse inequality:
\begin{equation}\label{eq:metric-equality}
 \|v\|_{\mathrm{Teich}}\leq\rho_U(v).
\end{equation}
The equality extends across the finite exceptional
set by continuity.
\end{proof}

\begin{proof}[Proof of
Theorem~\ref{thm:log-characterization},
\ref{it:log-split}$\Rightarrow$\ref{it:log-teich}]
By Lemma~\ref{lem:metric}, the lifted map
$\widetilde U\to\T_{g,n}$ is a holomorphic
infinitesimal isometry. The holomorphic-disk equality
theorem \cite[Theorem~5 and \S8]{EKK}, equivalently
\cite[Theorem~2.1]{Antonakoudis}, identifies its image
with a Teichm\"uller disk. Its moduli image is
$\mathcal C$, so $\mathcal C$ is a quadratic
Teichm\"uller curve.

We show that $\mathbb V$ is real, as needed in
Section~\ref{sec:rational}. Use the notation above
at a generic $b'$, with $v\ne0$. The metric equality in Lemma~\ref{lem:metric} gives
\[
 \|\theta_{\widetilde v}(w_a)\|
 =\|v\|_{\mathrm{Teich}}\,\|w_a\|.
\]
Thus we have the equality in the above Cauchy--Schwarz inequality, which implies that $\theta_{\widetilde v}(w_a)$ is a nonzero scalar
multiple of $\overline{w_a}$. By \eqref{eq:theta},
this is $r^\vee(\mathcal L^{-1})_{b'}$, viewed
inside $E^{0,1}_{b'}$. Thus
\[
 r^\vee(\mathcal L^{-1})_{b'}
 =\overline{\lambda(\mathcal L)_{b'}},
 \qquad
 \mathbb V_{b'}=\overline{\mathbb V_{b'}}.
\]
Flatness and connectedness of $B'\setminus D'$
give $\mathbb V=\overline{\mathbb V}$ globally.
Thus $\mathbb V$ is a real
rank-two local system.

Finally, put
$q_a=\beta'(a^{\otimes2})|_{X'_{b'}}$, so that
$p_{b'}^*q_a=w_a^{\otimes2}$.
Equality in \eqref{eq:pairing-bound} gives
\[
 \mu=\zeta\,\frac{\overline{q_a}}{|q_a|}
 \quad\text{almost everywhere},
 \qquad |\zeta|=\|v\|_{\mathrm{Teich}}>0.
\]
Hence $q_a$ spans the tautological quadratic line.
Consequently $\operatorname{im}\beta'$ is its
pullback to $V$, and $\operatorname{im}\beta$ is
the tautological quadratic line on $U$.
\end{proof}

As a direct consequence of this result and Proposition~\ref{prop:splitting}, we obtain Theorem~\ref{thm:structure}\ref{it:disk}.

\subsection{The logarithmic splitting on a quadratic
Teichm\"uller curve}\label{sec:converse}
Readers interested only in the proofs of Theorems~\ref{thm:finiteness}
and \ref{thm:structure} may skip this section, as it is not used in
the later arguments.

In this section, we prove the implication \ref{it:log-teich}$\Rightarrow$\ref{it:log-split} in Theorem~\ref{thm:log-characterization}. We first give the following extension lemma.

\begin{lemma}\label{lem:polylog-extension}
Let $f:(\mathcal X;S_1,\ldots,S_n)\to\Delta$ be a stable
pointed family, smooth over $\Delta^*$. Suppose that $q_t$ is
a holomorphic section of
$f_*(\omega_{\mathcal X/\Delta}^{\otimes2}(S))$
over $\Delta^*$ satisfying
\[
\int_{X_t}|q_t|
=O\left(\left(\log1/|t|\right)^N\right)
\]
for some $N\geq0$ and $|t|\ll 1$. Then $q_t$ extends holomorphically
over $0$ as a stable quadratic differential.
\end{lemma}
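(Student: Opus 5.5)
The plan is to work in a holomorphic frame of the locally free sheaf $f_*(\omega_{\mathcal X/\Delta}^{\otimes2}(S))$ near $0$ and show that the coefficient functions of $q_t$ have at worst a removable singularity. The key observation is that a pole of order $m\ge1$ forces $L^1$ mass of order $|t|^{-m}$ up to logarithms. That beats any power of $\log(1/|t|)$, so the coefficients extend by Riemann's theorem.

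Fix a normalized nodal frame $(q_1,\ldots,q_{3g-3+n})$ as in the proof of Proposition~\ref{prop:splitting}, and write $q_t=\sum_j b_j(t)q_j$ with $b_j$ holomorphic on $\Delta^*$. Suppose some $b_j$ is not holomorphic at $0$. The first step is to rule out an essential singularity. A pole of order $m\ge1$ is then handled by a rescaling argument.

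For the essential-singularity case I would argue via a coercive lower bound. The claim to establish is that there are $c>0$ and $M$ with
\[
\int_{X_t}\Bigl|\sum_j b_jq_j\Bigr|\;\ge\; c\,(\log1/|t|)^{-M}\max_j|b_j|
\]
for all $b\in\C^{3g-3+n}$ and small $t$.

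\emph{Thick part.} Choose compact subsets $K\subset X_t$ avoiding the plumbing annuli and the marked points; they vary smoothly with $t$. The restrictions of $q_{k+1},\ldots,q_{3g-3+n}$ together with the principal parts of $q_1,\ldots,q_k$ are uniformly linearly independent there. This independence comes from the limit fiber, where the restriction map from stable quadratic differentials to the normalization is injective.

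\emph{Collars.} On the $i$th annulus, \eqref{eq:annulus-L1} shows that the term $b_ia_i(dx_i/x_i)^2$ contributes mass comparable to $|b_i|\,\ell_i$, which is bounded below.

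Combining the two parts gives the bound, in fact with $M=0$. With the hypothesis this yields $|b_j(t)|=O((\log1/|t|)^N)$. A holomorphic function on the punctured disk with polylogarithmic growth has a removable singularity, since $|t|\,|b_j(t)|\to0$. Hence every $b_j$ extends holomorphically, and so does $q_t$, giving a section of the locally free sheaf, i.e.\ a stable quadratic differential on $X_0$.

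The main obstacle is the uniformity of the lower bound: the constant must not degenerate as $t\to0$, even though the frame elements $q_1,\ldots,q_k$ have growing mass on the collars. To handle it, I would argue by contradiction with a normalized sequence $b^{(n)}$, $\max|b^{(n)}_j|=1$, whose $L^1$ masses tend to $0$ along $t_n\to0$. Passing to a limit on compact subsets of $X_0$ minus nodes and marked points, the limit stable differential $\sum b_jq_j|_{X_0}$ has zero $L^1$ norm on every compact subset, so it vanishes. Since the frame is a basis of the fiber at $0$, this forces $b=0$, contradicting $\max|b_j|=1$. Local uniform convergence of $q_j|_{X_t}\to q_j|_{X_0}$ away from nodes, which comes from the plumbing description \eqref{eq:laurent-smooth}, justifies this limit. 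The marked points cause no trouble by \eqref{eq:mark-L1}.
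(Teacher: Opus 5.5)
Your proposal is correct and follows essentially the same route as the paper. You expand $q_t$ in the normalized nodal frame and bound $\max_j|b_j(t)|$ uniformly in $t$ by the $L^1$ mass on a thick compact part. Your normalized-sequence contradiction argument is exactly the justification of the paper's ``by continuity and finite dimensionality'' step. You then conclude, as the paper does, by Riemann's removable singularity theorem from $|t\,b_j(t)|\to0$.

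The announced pole-order rescaling, the essential-singularity case split, and the thick/collar decomposition are not needed. The compactness argument alone gives the bound with $M=0$. Taken by itself, the collar estimate would also not be valid, because it ignores possible cancellation between frame elements.
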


\begin{proof}
Use the normalized nodal frame
$q_1,\ldots,q_r$, where $r=3g-3+n$,
constructed from the coefficient map in
\eqref{eq:evmap}. Write
\[
q_t=\sum_{j=1}^r b_j(t)q_j(t)
\]
over $\Delta^*$.

Choose a compact subset $K_0$ of the smooth
unmarked locus of $X_0$ whose interior meets every
irreducible component, and let $K_t\subset X_t$
be its deformation to nearby fibers. Since a stable
quadratic differential vanishing on $K_0$ vanishes
identically, the $L^1$-norm on $K_0$ defines a norm
on the space of stable quadratic differentials.
By continuity and finite dimensionality, there is
a constant $C>0$ such that
\[
\sum_{j=1}^r |b_j(t)|
\leq C\int_{K_t}|q_t|
\leq C\int_{X_t}|q_t|
\]
for all sufficiently small $t\neq0$.

The assumption therefore gives $b_j(t)=O\left(\left(\log1/|t|\right)^N\right).$ Hence $t b_j(t)\to0$ as $t\to0$, and the removable
singularity theorem shows that every $b_j$
extends holomorphically across $0$.
Thus $q_t$ extends as a stable quadratic differential.
\end{proof}

\begin{proof}[Proof of
Theorem~\ref{thm:log-characterization},
\ref{it:log-teich}$\Rightarrow$\ref{it:log-split}]
Let $(X,P,q)$ generate the Teichm\"uller curve,
and let $p:Y\to X$ be its orientation cover, with
$p^*q=w^2$. Put $\tau=2$ in the nonsquare case
and $\tau=1$, $p=\operatorname{id}$, in the square case.

Write $w=\alpha+\sqrt{-1}\,\nu$, where $\alpha,\nu$
are real closed forms. In the standard upper-half-plane
parametrization of the Teichm\"uller disk, we have
\begin{equation*}
w_\zeta=\alpha+\zeta\nu,\qquad
p_\zeta^*q_\zeta=w_\zeta^2.
\end{equation*}
Put $A=\tau^{-1}\int_Y\alpha\wedge\nu>0$.
The standard affine description \cite{Veech,Moller}
gives
\begin{equation}\label{eq:tautological-area}
\int_{X_\zeta}|q_\zeta|
=A\operatorname{Im}\zeta.
\end{equation}
Moreover, the infinitesimal deformation class is represented by
\[
\mu_{\partial_\zeta}
=-\frac{1}{\zeta-\overline\zeta}
\frac{\overline{q_\zeta}}{|q_\zeta|}.
\]
Thus the Kodaira--Spencer pairing gives
\[
\kappa(q_\zeta)=c\,d\zeta,
\qquad c=\frac{\sqrt{-1}A}{2}\neq0.
\]
Consequently, on $\mathbb H$ we may define
\begin{equation}\label{eq:defbetaU}
\widetilde\beta(d\zeta)=c^{-1}q_\zeta.
\end{equation}

We show that this map descends to $U$.
Write $U=\Gamma\backslash\mathbb H$.
For $\gamma\in\Gamma$, let
$F_{\gamma,\zeta}:X_\zeta\to X_{\gamma\zeta}$
be the induced isomorphism of pointed curves.
Since it preserves the quadratic direction defining
the disk, we have
\[
F_{\gamma,\zeta}^*q_{\gamma\zeta}
=h_\gamma(\zeta)q_\zeta
\]
for some nonzero scalar $h_\gamma(\zeta)$.
Naturality of the Kodaira--Spencer map gives
\[
c\,h_\gamma(\zeta)d\zeta
=\gamma^*(c\,d\zeta)
=c\,\gamma'(\zeta)d\zeta.
\]
Hence
\begin{equation}\label{eq:quadratic-equivariance}
F_{\gamma,\zeta}^*q_{\gamma\zeta}
=\gamma'(\zeta)q_\zeta.
\end{equation}
Thus $\widetilde\beta$ is equivariant and descends
to a holomorphic right inverse $\beta_U$ over $U$.

It remains to extend $\beta_U$ across the cusps.
At a cusp, choose an adapted coordinate $\zeta$
such that the parabolic generator acts by
$\gamma(\zeta)=\zeta+T$, with $T>0$, and put
\begin{equation*}
t=\exp(2\pi\sqrt{-1}\,\zeta/T).
\end{equation*}
Since $\gamma'(\zeta)=1$,
\eqref{eq:quadratic-equivariance} shows that
$q_\zeta$ is single-valued on the punctured
$t$-disk. By \eqref{eq:tautological-area},
\begin{equation*}
\int_{X_t}|q_\zeta|
=\frac{AT}{2\pi}\log\frac1{|t|}.
\end{equation*}
Lemma~\ref{lem:polylog-extension} therefore extends
$q_\zeta$ holomorphically as a stable quadratic
differential.

Since $\kappa$ extends logarithmically, the identity
\[
\kappa(q_\zeta)
=\frac{AT}{4\pi}\frac{dt}{t}
\]
continues across $t=0$. Hence
\[
\beta\left(\frac{dt}{t}\right)
=\frac{4\pi}{AT}q_\zeta
\]
defines a holomorphic right inverse near the cusp.
These local extensions agree with $\beta_U$
and therefore glue to the required splitting
on $B$.
\end{proof}

\section{Rationality and equal-height property}
\label{sec:degeneration}
From this section onward, $\mathcal C$ is again assumed to be
WP totally geodesic and $\beta$ is the WP orthogonal splitting of
Section~\ref{sec:wp}. The nodal formula \eqref{eq:nodal} used below
belongs to that additional hypothesis.

We now use the Teichm\"uller disk obtained in
Theorem~\ref{thm:structure}\ref{it:disk}. Since every
Teichm\"uller curve is noncompact \cite{Veech,MasurCylinders},
the boundary divisor $D$ is nonempty and gives rise to a genuine degeneration.
Its monodromy will prove the absolute-period statement in
Lemma~\ref{lem:absolute-periods}, and the nodal calculation of
Proposition~\ref{prop:splitting} will then prove the equal-height
assertion of Theorem~\ref{thm:structure}\ref{it:heights}.

\subsection{The absolute periods}\label{sec:rational}

\begin{lemma}\label{lem:absolute-periods}
Let $\mathcal C\subset\M_{g,n}$ be an algebraic WP totally geodesic
curve, where $2g-2+n>0$, and let $(X,P,q)$ generate its
Teichm\"uller disk. Write $p^*q=w^2$ on the canonical
orientation cover $p:Y\to X$; in the square case take $Y=X$
and $p=\operatorname{id}$. Then the group of absolute periods
\[
 \left\{\int_\gamma w:\gamma\in H_1(Y,\Z)\right\}
\]
is a rank-two lattice in $\C$.
\end{lemma}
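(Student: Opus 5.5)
The plan is to show that in two transverse periodic directions of $q$, all cylinders of the orientation cover $Y$ have a common height. Once this holds, the absolute periods of $w$ are trapped inside a lattice.

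\emph{Step 1: cusp data of a periodic direction.} By Theorem~\ref{thm:structure}\ref{it:disk} and Veech's dichotomy \cite{Veech}, $\mathcal C$ is a Teichm\"uller curve, so its Veech group is a lattice. Hence $q$ has at least two non-parallel periodic directions. Each of them corresponds to a cusp of $U$, and therefore to a point of $D$ after the stable-reduction base change. Fix such a direction and rotate it to be horizontal. Let $C_1,\dots,C_k$ be the marked maximal cylinders, with circumferences $L_i$ and heights $h_i$. In the cusp coordinate $t=\exp(2\pi\sqrt{-1}\zeta/T)$, the degeneration $\zeta\to\sqrt{-1}\infty$ pinches exactly the core curves of the $C_i$. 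I will compute the plumbing parameters explicitly from the flat picture. The cylinder $C_i$ at parameter $\zeta$ is a flat annulus of modulus proportional to $\operatorname{Im}\zeta\cdot h_i/L_i$, and the standard collar coordinate $x_i=\exp(2\pi\sqrt{-1}z/L_i)$ gives $t_i\sim t^{m_i}$ with
\[
 m_i=\kappa_0\,\frac{h_i}{L_i}
\]
for a constant $\kappa_0$ depending only on $T$. The $m_i$ are positive integers; equivalently, the parabolic monodromy is the multitwist $\prod T_{\gamma_i}^{m_i}$. In the same coordinate, $q_\zeta=(dz)^2=-(L_i/2\pi)^2(dx_i/x_i)^2$ on the collar. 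Since $\operatorname{im}\beta$ is the tautological line (end of Section~\ref{sec:hodge}), $\beta(dt/t)$ is a fixed nonzero multiple of $q_\zeta$. Its limiting double-pole coefficients therefore satisfy $a_i=c'L_i^2$.

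\emph{Step 2: equal heights.} I now compare this with the WP nodal formula \eqref{eq:nodal}, $a_i=c\,m_i^{-2}$. Together with Step 1 this gives
\[
 L_i^2\propto L_i^2/h_i^2 ,
\]
so $h_1=\dots=h_k=:h$. This holds on $X$, with the cylinder boundaries including saddle connections through $P$. Cylinders of $Y$ are components of preimages of cylinders of $X$, so they have the same heights, possibly after a uniform factor depending only on the direction. Consequently all horizontal cylinders of $Y$ (with respect to $\Sigma$) have height $h$.

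\emph{Step 3: lattice.} Let $\gamma\in H_1(Y,\Z)$, and represent it by a closed curve transverse to the horizontal foliation away from $\Sigma$. Then $\operatorname{Im}\int_\gamma w$ is a signed sum of the heights of the horizontal cylinders crossed, so it lies in $h\Z$. Now apply the same reasoning to a second periodic direction $e^{\sqrt{-1}\vartheta}$ with $\vartheta\notin\pi\Z$. This gives $\operatorname{Im}\bigl(e^{-\sqrt{-1}\vartheta}\int_\gamma w\bigr)\in h'\Z$. These two real-linear functionals on $\C$ are independent, so the period group lies in the lattice they cut out and is discrete. It also spans $\C$ over $\R$: otherwise all periods of $w$ would be real multiples of one number, contradicting $\frac{\sqrt{-1}}{2}\int_Y w\wedge\overline w>0$ via the Riemann bilinear relations. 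Hence the period group is a rank-two lattice.

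\emph{Expected obstacle.} The delicate step is Step 1, namely matching the normalization of the plumbing coordinates and the integers $m_i$ in Definition~\ref{def:plumbing} with the flat cylinder moduli, uniformly over all nodes. I have to check the following points.
\begin{itemize}
\item The stable-reduction base change rescales every $m_i$ by the same factor.
\item The constant $\kappa_0$ is independent of $i$.
\item Collar coordinates on cylinders adjacent to marked points or to odd-order zeros give plumbing charts of the required type, so that $a_i$ is exactly $-(L_i/2\pi)^2$ up to a common factor.
\end{itemize}
I would carry this out using Wolpert's description of the flat collar as a plumbing annulus \cite{WolpertNodal}.
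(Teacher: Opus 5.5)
Your argument is correct, but it runs in the opposite order from the paper. The paper proves this lemma before the equal-height property, and independently of it, using Hodge theory at a single cusp:
\begin{itemize}
\item \eqref{eq:nodal} makes the residues of $w_0$ at the lifted nodes proportional to $1/m_i$;
\item Picard--Lefschetz then gives $Nw_0=(2\pi\sqrt{-1}\nu)v$ with $v$ a nonzero integral class;
\item $v$ is shown to lie in the flat summand $\mathbb V$, and irreducibility makes $\mathbb V$ defined over $\Q$, which gives the lattice.
\end{itemize}

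Your Steps~1--2 are essentially the paper's own proof of Theorem~\ref{thm:structure}\ref{it:heights}. That proof does not use this lemma, so nothing is circular. Your Step~3 is the height-function argument of Lemma~\ref{lem:relative-torus}. The difference is that your two transverse periodic directions come from the lattice Veech group rather than from a torus covering.

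Your route is more elementary. It avoids Picard--Lefschetz on the semistable model, the treatment of the extra nodes, and the projection onto $\mathbb V$. It also places the whole relative period group in $\{z:\operatorname{Im}z\in h\Z,\ \operatorname{Im}(e^{-\sqrt{-1}\vartheta}z)\in h'\Z\}$. This makes the absolute-lattice hypothesis of Lemma~\ref{lem:relative-torus} unnecessary in this application. One cusp even suffices: a Veech-group element carries a balanced direction to a nonparallel one and rescales all heights by a common factor.

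The paper's route, in exchange, uses only the residue ratios at one cusp inside the VHS framework. It never has to identify the pinched curves with cylinder cores, or the plumbing exponents with $lh_i/L_i$.

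Two small repairs are needed.
\begin{itemize}
\item The normalization worry in your Step~1 disappears if you read off $m_i$ from the affine multitwist and detect $a_i$ through the period of $\sqrt{q_t}$ around the vanishing cycle. That period tends to $2\pi\sqrt{-1}\sqrt{a_i}$ and equals $L_i$ up to a factor common to all $i$. The quadratic residue is coordinate-free, so no collar coordinate needs to be a plumbing chart.
\item In Step~3, a class with $\operatorname{Im}\int_\gamma w=0$ is not represented by any curve transverse to the horizontal foliation. Argue instead with the continuous map $Y\to\R/h\Z$ given by transverse height inside each cylinder; it is well defined because all heights equal $h$, and its differential is $\operatorname{Im}w$.
\end{itemize}
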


\begin{proof}
\emph{A nonzero integral class.} By Theorem~\ref{thm:structure}\ref{it:disk} and the fact that every Teichm\"uller curve is noncompact \cite{Veech,MasurCylinders}, the curve $\mathcal C$ has $D\ne\emptyset$. Choose a point $0\in D$ and perform the auxiliary
base changes and equivariant semistable reduction used
in Section~\ref{sec:cover}. 

After the finite base changes making the monodromy unipotent,
let $T$ denote the monodromy on $H^1(Y_t,\mathbb Z)$ around $0$,
and put $N:=\log T$. Let $w_0$ be the limiting value of the section $w_t$ in the Deligne canonical extension of the
Hodge bundle. We write $Q_{\mathrm{cup}}$ for the cup-product
pairing
\[Q_{\mathrm{cup}}(\alpha,\beta):=\int_{Y_t}\alpha\wedge\beta\]
on $H^1(Y_t,\mathbb C)$. We choose the
Poincar\'e-duality sign so that
$Q_{\mathrm{cup}}(w_0,\mathrm{PD}(\delta))=\int_\delta w_0$,
where $\delta\in H_1$ and $\mathrm{PD}$ denotes Poincar\'e duality.

Write the original plumbing parameters as
$t_i=t^{m_i}u_i(t)$, with $u_i(0)\ne0$; here $m_i$
is the total exponent. In \eqref{eq:nodal}, $c$ denotes the common
coefficient for the logarithmic generator $dt/t$ on
the final base, including the rescaling induced by the
base changes. Choose $\nu\in\C^\times$ with $\nu^2=c$.

Let $X_0$ be the stable fiber over $0$, and let
$\mathcal Y\to B'\to B$ be the composed family defined in Section~\ref{sec:cover}. Denote its central
fiber by $Y_0$.
Near each original node, the limiting quadratic
differential has a nonzero double-pole coefficient,
so its square root has nonzero residue. Thus for $t\ne0$, the cover
$p_t:Y_t\to X_t$ is unramified over each
plumbing annulus corresponding to an original node. 
Above the $i$th original nodal collar, the orientation
cover has $\tau$ lifts, indexed by $j=1,\ldots,\tau$.
With compatible orientations of their vanishing
cycles $\delta_{ij}$, the limiting residues are
\[
 \operatorname{Res}_{ij}(w_0)
 =\epsilon_{ij}\frac{\nu}{m_i},
 \qquad \epsilon_{ij}\in\{\pm1\}.
\]
The sign comes from the
ambiguity of the choice of square root $w_0=\pm\sqrt{q_0}$ at each
lift of the node when $\tau=2$.

Semistable reduction may replace an original node
by a chain of exceptional components. The smoothing
multiplicities along the chain sum to $m_i$, while
the vanishing cycles are homologous and their
residues agree up to the chosen orientations.
Thus the total monodromy contribution of each
lifted chain is determined by the original
smoothing exponent $m_i$ and residue
$\epsilon_{ij}\nu/m_i$.

The remaining nodes arise over smooth or marked
points of $X_0$. Choose corresponding small disks
$\mathbb B_t\subset X_t$ around these points on the nearby fibers.
Since $\beta(dt/t)$ extends holomorphically as a stable quadratic differential,
\[
 \int_{p_t^{-1}(\mathbb B_t)}|w_t|^2
  =\tau\int_{\mathbb B_t}|q_t|=O(1).
\]
A nonzero limiting residue at such a node would
force logarithmic divergence of this integral
along its plumbing collar. Hence all these
additional nodes have zero residues.

Consequently, the Picard--Lefschetz formula
\cite{Clemens} gives
\begin{equation}\label{eq:monodromy-vector}
 Nw_0=(2\pi \sqrt{-1}\nu)v,
\end{equation}
where
\[
 v:=\sum_{i=1}^k\sum_{j=1}^{\tau}
 \epsilon_{ij}\PD(\delta_{ij})
 \in H^1(Y,\Z),
\]
and $\PD(\delta_{ij})$ denotes the Poincar\'e
dual of $[\delta_{ij}]\in H_1(Y,\Z)$. Then
\[
 Q_{\mathrm{cup}}(w_0,v)
       =2\pi \sqrt{-1}\nu\,\tau\sum_i m_i^{-1}\ne0.
\]
Thus $v$ is a nonzero integral class.

\emph{Rationality of the tautological plane.}
Let $P_{\mathbb V}$ denote the flat projection onto the rank-two
flat summand $\mathbb V$. Since
both summands are monodromy invariant, this projection commutes with
the monodromy and hence with $N$. It extends to the canonical
extension. Since $P_{\mathbb V}w_0=w_0$
in the limiting fiber, applying $P_{\mathbb V}$ to
\eqref{eq:monodromy-vector} gives
\[
 P_{\mathbb V}v=\frac{P_{\mathbb V}Nw_0}{2\pi \sqrt{-1}\nu}
 =\frac{NP_{\mathbb V}w_0}{2\pi \sqrt{-1}\nu}=v,
\]
so $P_{\mathbb V}v=v$ in the limiting fiber. Moreover $Nv=0$, as the
vanishing cycles are disjoint; hence $v$ extends to a flat section of
the extended local system. Since $P_{\mathbb V}$ is flat, it preserves
flat sections, and a flat section is determined by its value at one
point; therefore $P_{\mathbb V}v=v$ on the whole punctured disk. Thus
$v$ lies in the actual flat summand $\mathbb V$.

Every vector in the monodromy orbit of $v$ is rational and lies in
$\mathbb V$. By irreducibility
(Lemma~\ref{lem:factor}\ref{it:higgs}) their complex span is
all of $\mathbb V$; since these spanning vectors are rational, their
$\Q$-span also has dimension two. Hence $\mathbb V$ is defined over
$\Q$.

\emph{The actual period group.}
By Theorem~\ref{thm:structure}\ref{it:disk}, the real plane
underlying $\mathbb V$ is spanned by $\operatorname{Re}w$ and
$\operatorname{Im}w$. Choose a rational basis and clear denominators.
The absolute periods of $w$ then lie in a rank-two lattice in $\C$.
Their actual additive group also has rank two: rank at most one
would make $\operatorname{Re}w$ and $\operatorname{Im}w$ dependent,
contradicting the fact that the Hodge norm of $w$ is positive. Thus this group, which we
denote by $\Lambda_{\mathrm{abs}}$, is itself a rank-two lattice.
\end{proof}

\subsection{Equal cylinder heights}

Let $(X,P,q)$ be the pointed quadratic differential
surface generating $\mathcal C$, as in
Theorem~\ref{thm:structure}\ref{it:disk}.
Let $\Gamma\subset\PSL$ be its projective Veech group,
consisting of the projective derivatives of
orientation-preserving affine diffeomorphisms
preserving the marked points.
Since the flat coordinates of $q$ have transition
maps $z\mapsto\pm z+c$, these derivatives are
well defined up to sign.
As $\mathcal C$ is a Teichm\"uller curve,
$\Gamma$ is a lattice
\cite{Veech,MollerFiniteness}.

\begin{proof}[Proof of Theorem~\ref{thm:structure}\ref{it:heights}]
Let $\vartheta$ be any periodic direction. By Veech's
theorem \cite{Veech}, it determines a cusp of the
Teichm\"uller curve. Let $0\in D$ be the corresponding
boundary point. After a finite base change ramified
only over $D$, choose a local parameter $t$ at $0$.
Throughout, $m_i$ denotes the total plumbing exponent
for this parameter, and $\beta$ and \eqref{eq:nodal}
are understood on the same base. By the proof of
Theorem~\ref{thm:structure}\ref{it:disk},
$\beta(dt/t)$ spans the tautological quadratic line.

\emph{Cusp monodromy and affine multitwists.}
A positively oriented small loop around $t=0$
induces the cusp monodromy
$\rho_{\mathrm{cusp}}\in\operatorname{Mod}_{g,n}$,
where $\operatorname{Mod}_{g,n}$ is the mapping class
group preserving the ordered marked points.
Under the Teichm\"uller uniformization,
\[
\rho_{\mathrm{cusp}}=[F_A^e]
\]
for some $e>0$, where $A\in\Gamma$ is parabolic
in direction $\vartheta$ and $F_A$ is an affine
representative on the original pointed quadratic
surface
\cite[Proposition~2.4]{Veech};
cf.~\cite[p.~70]{MollerFiniteness}.
After a sufficiently large finite base change,
we may assume that $F_A^e$ preserves each marked
maximal cylinder and its boundary data.

Let $\gamma_i$ be the core curves of these cylinders.
They are \emph{essential}, meaning that none bounds
a disk containing at most one marked point, and
\emph{pairwise nonisotopic} in $X_t\setminus P_t$.
Indeed, flat Gauss--Bonnet excludes peripheral
cores, while an isotopy annulus between distinct
cores containing no marked points cannot contain
zeros of $q$, contradicting the maximality of the
cylinders. Along the corresponding Teichm\"uller
ray, the cylinder moduli tend to infinity, so the extremal length
\[
\operatorname{Ext}_{X_t\setminus P_t}(\gamma_i)
\leq \frac{1}{\operatorname{mod}(C_i(t))}
\longrightarrow 0.
\]
Thus the $\gamma_i$ degenerate to nodes in the
stable pointed degeneration
(cf.~\cite[p.~70]{MollerFiniteness}).

If the derivative of $F_A^e$ is $\left(\begin{smallmatrix}1&l\\0&1\end{smallmatrix}\right)$,
with $l>0$, then on the $i$th cylinder of circumference $L_i$ and
height $h_i$, the top boundary is displaced by $lh_i$ relative to
the bottom. Thus the cusp monodromy is a positive multitwist along
the $\gamma_i$, with twist exponents $lh_i/L_i$.

On the other hand, the monodromy of the stable pointed family is the
positive multitwist along its nodal vanishing cycles, with exponents
given by the plumbing orders. Uniqueness of multitwist
decompositions identifies these cycles with the $\gamma_i$, after
reindexing. Consequently,
\begin{equation}\label{eq:twists}
 \rho_{\mathrm{cusp}}=\prod_i T_{\gamma_i}^{m_i},
 \qquad m_i=\frac{l h_i}{L_i}\in\mathbb Z_{>0}.
\end{equation}
In particular, the corresponding plumbing parameters are
$t_i=t^{m_i}u_i(t)$, with $u_i(0)\ne0$
\cite[p.~70]{MollerFiniteness}.

\emph{Comparison of nodal coefficients.}
Put $q_t=\beta(dt/t)$, and let $w_t$ be a square root of its pullback
to the orientation cover. On the $i$th plumbing collar,
\eqref{eq:nodal} gives
\[
 q_t=\bigl(c\,m_i^{-2}+o(1)\bigr)
 \left(\frac{dx_i}{x_i}\right)^2
\]
along the middle circle $|x_i|=|t_i|^{1/2}$.
Let $\widetilde\gamma_i$ be a lift of the corresponding cylinder
core, homologous to a lift of this middle circle in the orientation
cover. Then
\[
 \left|\int_{\widetilde\gamma_i}w_t\right|^2
 \longrightarrow\frac{4\pi^2|c|}{m_i^2}.
\]
Along the chosen Teichm\"uller ray, the holonomies of the parallel
cylinder cores are multiplied by a common factor; the scalar
normalization of $q_t$ contributes another common factor. Thus, for
any $i,j$,
\[
 \frac{L_i^2}{L_j^2}
 =\frac{\left|\int_{\widetilde\gamma_i}w_t\right|^2}
        {\left|\int_{\widetilde\gamma_j}w_t\right|^2}
 \longrightarrow\frac{m_j^2}{m_i^2}.
\]
Consequently, $m_iL_i$ is independent of $i$, and
\eqref{eq:twists} shows that all $h_i$ are equal.
This proves Theorem~\ref{thm:structure}\ref{it:heights}.
\end{proof}

\section{Relative periods, arithmeticity, and finiteness}\label{sec:flat}

We first construct the canonical torus covering from the relative
periods and use it to prove
Theorem~\ref{thm:structure}\ref{it:arithmetic}.
We then bound its degree in Theorem~\ref{thm:flat} and deduce the finiteness Theorem~\ref{thm:finiteness}.

In this section $(Y,\omega,\Sigma)$ denotes a compact translation
surface with distinguished points; points of
$\Sigma\setminus Z(\omega)$ are \textbf{regular markings}: they carry no cone singularity, but cylinders are required to avoid
them, so each such point borders the cylinder decomposition. The absolute and relative period groups are
\[
 \Lambda_{\mathrm{abs}}
   =\left\{\int_\gamma\omega:\gamma\in H_1(Y,\Z)\right\},
 \qquad
 \Lambda_{\mathrm{rel}}
   =\left\{\int_\gamma\omega:\gamma\in H_1(Y,\Sigma;\Z)\right\};
\]
cylinders are maximal in $Y\setminus\Sigma$. We call the marked
surface \textbf{balanced} if all maximal cylinders have the same
height in every periodic direction; this property is
$\GL$-invariant.

\subsection{The canonical torus covering}
\begin{lemma}\label{lem:relative-torus}
Suppose that $(Y,\omega,\Sigma)$ is balanced, $Z(\omega)\subset\Sigma\ne\emptyset$ and $\Lambda_{\mathrm{abs}}$ is a rank-two lattice. Then
$\Lambda_{\mathrm{rel}}$ is a rank-two lattice, and integration from
a point of $\Sigma$ defines a translation covering
\[
 Y\longrightarrow\C/\Lambda_{\mathrm{rel}}
\]
with all of $\Sigma$ over the origin. Its degree is the $\GL$-invariant
positive integer
\begin{equation}\label{eq:degree}
 N(Y,\omega,\Sigma)=\frac{\Area(Y,\omega)}{\covol\Lambda_{\mathrm{rel}}}.
\end{equation}
After normalizing $\Lambda_{\mathrm{rel}}$ to $\Z+i\Z$, the surface
is tiled by exactly $N(Y,\omega,\Sigma)$ unit squares, with every
point of $\Sigma$ at a vertex.
\end{lemma}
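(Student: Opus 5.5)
The plan is to show first that the relative periods lie in a lattice commensurable with $\Lambda_{\mathrm{abs}}$. Once that is known, the covering is the standard integration map and the degree formula is an area count. Since $\Lambda_{\mathrm{abs}}$ is a rank-two lattice, $(Y,\omega)$ is a torus cover ramified over a single point (Gutkin--Judge). In particular it is a Veech surface with every direction of a saddle connection periodic, and $\Lambda_{\mathrm{abs}}\otimes\Q$ determines a $\Q$-structure on $\C\simeq\Q^2\otimes\R$. The relative group $\Lambda_{\mathrm{rel}}$ is generated by $\Lambda_{\mathrm{abs}}$ together with the periods $\int_{s}\omega$ of finitely many paths $s$ joining a fixed base point $\sigma_0\in\Sigma$ to the other points of $\Sigma$. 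It therefore suffices to show that every saddle connection (for the marking $\Sigma$) has holonomy in $\Lambda_{\mathrm{abs}}\otimes\Q$. A finitely generated subgroup of $\Q\Lambda_{\mathrm{abs}}$ containing $\Lambda_{\mathrm{abs}}$ is then a rank-two lattice.

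\textbf{Key step (balanced condition gives rational relative periods).} Fix a saddle connection $s$. Its direction $\vartheta$ has rational slope with respect to $\Lambda_{\mathrm{abs}}$, because the torus cover makes every saddle-connection direction a rational direction, the marked points not changing the set of periodic directions. Decompose $Y$ into marked maximal cylinders $C_1,\dots,C_r$ in direction $\vartheta$. The circumferences $L_i$ are holonomies of core curves, hence lie in $\Lambda_{\mathrm{abs}}\cap\R e^{i\vartheta}$, which is a rank-one group. For the transverse direction, pick a direction $\vartheta'$ of some closed curve transverse to $\vartheta$, again rational. The balanced hypothesis says all $C_i$ have a common height $h$. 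A closed curve crossing the cylinders has transverse component a positive integer multiple of $h$. This shows $h$ is a rational multiple of the covolume divided by the length of a primitive vector of $\Lambda_{\mathrm{abs}}\cap\R e^{i\vartheta}$. Now $s$ lies in the boundary of some cylinder, and its endpoints are points of $\Sigma$ on a boundary circle of length $L_i$. The positions of the points of $\Sigma$ along boundary circles are tied together by the twist parameters. Here apply a parabolic element of the Veech group in direction $\vartheta$. Its multitwist acts with twist $lh/L_i$ on $C_i$, integral as in the proof of Theorem~\ref{thm:structure}\ref{it:heights}. Since the heights are equal, the moduli $h/L_i$ are commensurable. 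Comparing a closed curve with its image then forces the horizontal offsets between points of $\Sigma$ on consecutive boundaries to lie in $\Q L_i$. Chaining these offsets along the cylinder diagram expresses $\int_s\omega$ as a rational combination of absolute periods.

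This chaining is the main obstacle. Equal heights control only the vertical (transverse) coordinates of the points of $\Sigma$. The horizontal coordinates must come from the periodic-direction structure in a second, transverse rational direction. My first attempt is symmetric: apply the balanced hypothesis in two rational directions $\vartheta,\vartheta'$. In each direction all points of $\Sigma$ lie on boundaries of cylinders of common height, so their transverse coordinates relative to $\sigma_0$ lie in $\Z h_\vartheta$ and $\Z h_{\vartheta'}$ respectively. Both coordinates are then rational, giving $\int_{\sigma_0}^{\sigma}\omega\in\Q\Lambda_{\mathrm{abs}}$ directly and bypassing the twist argument. To make this precise, develop the vertical coordinate along a path from $\sigma_0$ to $\sigma$ that crosses whole cylinders: each crossing contributes $\pm h$, and motion within a boundary contributes $0$.

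\textbf{Conclusion.} With $\Lambda_{\mathrm{rel}}$ a lattice, the map $y\mapsto\int_{\sigma_0}^y\omega \bmod \Lambda_{\mathrm{rel}}$ is well defined. It is a local translation away from $Z(\omega)$, sends all of $\Sigma$ to $0$, and is branched only over $0$. Being a proper local homeomorphism off finitely many points, it is a finite covering. Its degree is given by comparing areas, which yields \eqref{eq:degree}. $\GL$-invariance holds because $g$ scales both the area and $\covol\Lambda_{\mathrm{rel}}$ by $\det g$. Normalizing $\Lambda_{\mathrm{rel}}=\Z+i\Z$, pull back the unit square of the torus to obtain a tiling by $N$ squares, with $\Sigma$ lying over the vertex $0$.
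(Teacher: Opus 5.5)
Your final ``symmetric'' argument is the paper's proof. The paper normalizes $\Lambda_{\mathrm{abs}}$ to $\Z+i\Z$ and works in the horizontal and vertical directions. It replaces your path count by circle-valued height functions $u_{\mathrm h},u_{\mathrm v}$ that vanish on the critical graphs, and it concludes $\Z+i\Z\subseteq\Lambda^{(1)}_{\mathrm{rel}}\subseteq h_{\mathrm v}\Z\oplus i\,h_{\mathrm h}\Z$ directly, so no detour through $\Q\Lambda_{\mathrm{abs}}$ or rationality of $h$ is needed. The covering map and degree count then follow exactly as in your conclusion.

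You should drop your opening claims and the twist step. Integration modulo $\Lambda_{\mathrm{abs}}$ may a priori branch over several points, so $(Y,\omega)$ is not yet known to be Veech, and saddle-connection directions need not be rational. The symmetric argument only needs that directions of vectors in $\Lambda_{\mathrm{abs}}$ are periodic, and that holds for any torus cover.
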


\begin{proof}
Choose $p_0\in\Sigma$ and $A\in\GL$ with
$A\Lambda_{\mathrm{abs}}=\Z+i\Z$. Set
$\omega_1=A_*\omega$ and
$\Lambda_{\mathrm{rel}}^{(1)}=A\Lambda_{\mathrm{rel}}$.
Integration modulo $\Z+i\Z$ gives a finite translation
covering
\[
 F:Y\longrightarrow \C/(\Z+i\Z),\qquad
 F(p)=\int_{p_0}^p\omega_1\pmod{\Z+i\Z}.
\]
The horizontal and vertical directions on the square
torus are completely periodic, and their lifts induce
the corresponding marked cylinder decompositions on
$Y$ (cf.~\cite[\S~2.3]{HubertLelievre}).
By balance, all horizontal cylinders have a common
height $h_{\mathrm h}>0$, and all vertical cylinders
have a common height $h_{\mathrm v}>0$.

Let $\mathcal G_{\mathrm h}$ be the horizontal critical
graph, and let $C_1,\ldots,C_r$ be the complementary
horizontal cylinders. For each $C_j$, choose a point
$b_j$ on one boundary component of its metric
completion. Define
\begin{align}\label{eq:uh}
u_{\mathrm h}:Y\longrightarrow
\R/h_{\mathrm h}\Z,
\qquad
u_{\mathrm h}(p)=
\begin{cases}
\displaystyle
\operatorname{Im}\int_{b_j}^{p}\omega_1
\pmod{h_{\mathrm h}},
& p\in C_j,\\[6pt]
0,&p\in\mathcal G_{\mathrm h}.
\end{cases}
\end{align}
The imaginary part of the integral is independent of
the path within the cylinder, since its core periods
are real. As each cylinder has height $h_{\mathrm h}$,
the two boundary components both map to zero.
Thus $u_{\mathrm h}$ is continuous on $Y$ and satisfies
\[
du_{\mathrm h}=\operatorname{Im}\omega_1
\quad\text{on }Y\setminus\mathcal G_{\mathrm h},
\qquad
u_{\mathrm h}|_\Sigma=0.
\]
Consequently, for every $\sigma\in\Sigma$,
\[
\operatorname{Im}\int_{p_0}^{\sigma}\omega_1
\in h_{\mathrm h}\Z.
\]

Similarly, let $\mathcal G_{\mathrm v}$ be the vertical
critical graph, with complementary cylinders
$C'_1,\ldots,C'_{r'}$. Choosing a point $b'_j$ on
one boundary component of each $C'_j$, define
\begin{align}\label{eq:uv}
u_{\mathrm v}:Y\longrightarrow
\R/h_{\mathrm v}\Z,
\qquad
u_{\mathrm v}(p)=
\begin{cases}
\displaystyle
\operatorname{Re}\int_{b'_j}^{p}\omega_1
\pmod{h_{\mathrm v}},
& p\in C'_j,\\[6pt]
0,&p\in\mathcal G_{\mathrm v}.
\end{cases}
\end{align}
The same argument gives
\[
du_{\mathrm v}=\operatorname{Re}\omega_1
\quad\text{on }Y\setminus\mathcal G_{\mathrm v},
\qquad
u_{\mathrm v}|_\Sigma=0,
\]
and hence
\[
\operatorname{Re}\int_{p_0}^{\sigma}\omega_1
\in h_{\mathrm v}\Z
\qquad(\sigma\in\Sigma).
\]
Therefore
\[
\Z+i\Z\subseteq\Lambda_{\mathrm{rel}}^{(1)}
\subseteq h_{\mathrm v}\Z\oplus i\,h_{\mathrm h}\Z,
\]
so $\Lambda_{\mathrm{rel}}$ is a rank-two lattice.

The relative-period construction
\cite[Lemma~2.1]{HubertLelievre} now gives a finite translation
covering $G:Y\to\C/\Lambda_{\mathrm{rel}}^{(1)}$ with
$G(\Sigma)=\{0\}$. As $G^*dz=\omega_1$,
\[
 \deg G=
 \frac{\Area(Y,\omega_1)}{\covol\Lambda_{\mathrm{rel}}^{(1)}}
 =\frac{\Area(Y,\omega)}{\covol\Lambda_{\mathrm{rel}}}.
\]
This ratio is $\GL$-invariant. Normalizing the relative lattice
to $\Z+i\Z$ yields exactly $\deg G$ unit squares, with all
points of $\Sigma$ at vertices.
\end{proof}

We say that a vector $u\in\Lambda_{\mathrm{rel}}$ is \textbf{primitive}
if $u/k\notin\Lambda_{\mathrm{rel}}$ for every integer $k\geq2$,
equivalently $u$ generates $\R u\cap\Lambda_{\mathrm{rel}}$. The
direction of $u$ is completely periodic by the torus covering, so it
decomposes $Y\setminus\Sigma$ into marked maximal cylinders; by
balance these have a common height.
\begin{lemma}\label{lem:height}
Under the hypotheses of Lemma~\ref{lem:relative-torus}, set
$A_{\mathrm{rel}}=\covol\Lambda_{\mathrm{rel}}$. For a primitive vector
$u\in\Lambda_{\mathrm{rel}}$, the
common cylinder height is
\begin{equation}\label{eq:exact-height}
 h=\frac{A_{\mathrm{rel}}}{|u|}.
\end{equation}
If cylinders in two nonparallel directions have heights $h_1,h_2$,
then
\begin{equation}\label{eq:determinant}
 A_{\mathrm{rel}}\geq h_1h_2.
\end{equation}
\end{lemma}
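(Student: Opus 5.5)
We normalize via Lemma~\ref{lem:relative-torus}. After applying an element of $\GL$ we may assume $\Lambda_{\mathrm{rel}}=\Z+i\Z$, so $A_{\mathrm{rel}}=1$. The ratio $A_{\mathrm{rel}}\,|u|^{-1}h^{-1}$ is $\GL$-invariant: both $\covol$ and area scale by $\det$, and a cylinder in direction $u$ with circumference $L$ and height $h$ has area $Lh$, which scales by $\det$. So it suffices to prove both formulas in this normalization. Now $Y$ covers the square torus $T=\C/(\Z+i\Z)$ via $G$, with $\Sigma=G^{-1}(0)\cap\ldots$, more precisely with $\Sigma\subset G^{-1}(0)$. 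A primitive $u\in\Z+i\Z$ is then a primitive integer vector $(a,b)$ with $\gcd(a,b)=1$.

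\textbf{Step 1: height bound from the torus.} In $T$, the direction $u$ decomposes $T\setminus\{0\}$ into a single cylinder. Its core has length $|u|$ and its height is $1/|u|$, since the area is $1$. The closed leaf through $0$ is the image of the segment from $0$ to $u$. Every marked maximal cylinder $C$ of $Y$ in direction $u$ avoids $G^{-1}$ of that leaf only if $G^{-1}(0)\subset\Sigma$, which need not hold. Instead we argue as follows. The boundary of $C$ consists of saddle connections through points of $\Sigma$, all mapping to $0$. Hence the two boundary components of $C$ map into the closed leaf $\ell_0$ through $0$ in $T$. A transverse segment $\delta$ across $C$ therefore maps to a segment in $T$ perpendicular to $u$, starting and ending on $\ell_0$. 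So $h$ is a positive multiple of the spacing $1/|u|$ between consecutive lifts of $\ell_0$ in $\C$, which gives $h\ge 1/|u|$. The spacing claim holds because the lifts of $\ell_0$ are the lines $\{z:\operatorname{Im}(\bar u z)\in\Z\}$, at distance $1/|u|$ apart by primitivity.

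\textbf{Step 2: equality via balance and a relative period.} To get $h\le 1/|u|$ we use that $\Lambda_{\mathrm{rel}}$ is generated by relative periods. Take $v\in\Z+i\Z$ with $\det(u,v)=1$. Then $v$ is a relative period, say $v=\int_\gamma\omega$ for some $\gamma\in H_1(Y,\Sigma;\Z)$. Its component transverse to $u$ is exactly $1/|u|$. On the other hand, every relative period has transverse component in $h\Z$. To see this, use the function $u_{\mathrm h}$ of \eqref{eq:uh}, built for direction $u$: by balance it is well defined modulo the common height $h$, it vanishes on $\Sigma$, and its differential is the transverse part of $\omega$. Hence $h$ divides $1/|u|$. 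Combined with Step~1, this gives $h=1/|u|$, which is \eqref{eq:exact-height}. In fact this argument alone already gives $h\mid 1/|u|$, and Step~1 gives the reverse inequality.

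\textbf{Step 3: the determinant inequality.} For two nonparallel periodic directions, each direction is completely periodic, but its direction vector need not a priori lie in $\Lambda_{\mathrm{rel}}$. By Lemma~\ref{lem:relative-torus}, however, periodic directions of $Y$ coincide with those of the torus covering, namely the rational directions. So each direction contains a primitive vector $u_k\in\Lambda_{\mathrm{rel}}$, and the corresponding heights are $h_k=A_{\mathrm{rel}}/|u_k|$. Then
\[h_1h_2=\frac{A_{\mathrm{rel}}^2}{|u_1||u_2|}\le\frac{A_{\mathrm{rel}}^2}{|\det(u_1,u_2)|}\le A_{\mathrm{rel}},\]
because $|\det(u_1,u_2)|$ is a nonzero integer multiple of $A_{\mathrm{rel}}$.

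The main obstacle is Step~2. One must check that the transverse-height function, defined as in \eqref{eq:uh} for an arbitrary direction $u$, is continuous when all cylinders share the height $h$. Its differential is smooth away from the critical graph, and the argument requires it to vanish on all of $\Sigma$, including the regular markings.
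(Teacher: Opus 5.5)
Your proof is correct and follows essentially the paper's route. The lower bound $h\in|u|^{-1}\Z_{>0}$ comes from boundary components lying over the leaf through $0$; this is the same computation as the paper's relative path $\sigma_1\to\xi_1\to\xi_2\to\sigma_2$, just phrased through the torus covering. The divisibility $1/|u|\in h\Z$ comes from the balanced transverse coordinate vanishing on $\Sigma$, and the determinant inequality comes from primitive vectors $u_1,u_2$ in the two directions. The normalization $\Lambda_{\mathrm{rel}}=\Z+i\Z$ is harmless (invariance also uses that the core holonomy is a real multiple of $u$). In Step~3, the quickest reason each cylinder direction contains a vector of $\Lambda_{\mathrm{rel}}$ is that the core holonomy lies in $\Lambda_{\mathrm{abs}}\subseteq\Lambda_{\mathrm{rel}}$.
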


\begin{proof}

Put $\delta:=A_{\mathrm{rel}}/|u|$ and for any $z\in\C$ consider the
transverse coordinate
\[
 y(z):=\frac{\det(u,z)}{|u|}.
\]
Since $u$ is primitive in $\Lambda_{\mathrm{rel}}$,
we have $y(\Lambda_{\mathrm{rel}})=\delta\Z$.

We first prove $h\in\delta\Z$. Let $C$ be a marked maximal cylinder in this direction.
Let $\partial_-C$ and $\partial_+C$ denote the lower and
upper boundary components of the metric completion of $C$,
respectively. Choose points
\[
\xi_1\in \partial_-C\setminus\Sigma,
\qquad
\xi_2\in \partial_+C\setminus\Sigma,
\]
so that the segment $\overline{\xi_1\xi_2}$ is transverse to the
core curves of $C$. Since each boundary component meets
$\Sigma$, choose points
\[
\sigma_1\in \partial_-C\cap\Sigma,
\qquad
\sigma_2\in \partial_+C\cap\Sigma.
\]
Joining three lines $\overline{\sigma_1\xi_1}$, $\overline{\xi_1\xi_2}$, and
$\overline{\xi_2\sigma_2}$ gives a path $\gamma$ from
$\sigma_1$ to $\sigma_2$; see
Figure~\ref{fig:cylinder-gamma}.
\begin{figure}[ht]
\centering
\begin{tikzpicture}[scale=1.0,>=Latex]

% rectangle for the planar model of the cylinder
\draw[thick] (0,0) rectangle (8,3);

% labels for dimensions
\draw[<->] (0,-0.45) -- (8,-0.45);
\node at (4,-0.75) {$L$};

\draw[<->] (8.45,0) -- (8.45,3);
\node[right] at (8.45,1.5) {$h$};

% boundary components
\node[left] at (0,3) {$\partial_+ C$};
\node[left] at (0,0) {$\partial_- C$};

% distinguished points Sigma on the boundary
\fill (1.1,0) circle (1.7pt);
\node[below] at (1.1,0) {$\sigma_1$};

\fill (6.9,3) circle (1.7pt);
\node[above] at (6.9,3) {$\sigma_2$};

% auxiliary points xi on the boundary where the vertical arc meets
\fill (4.1,0) circle (1.7pt);
\node[below] at (4.1,0) {$\xi_1$};

\fill (4.1,3) circle (1.7pt);
\node[above] at (4.1,3) {$\xi_2$};

% completed relative path gamma
\draw[red,very thick,->] (1.1,0) -- (4.1,0);
\draw[red,very thick,->] (4.1,0) -- (4.1,3);
\draw[red,very thick,->] (4.1,3) -- (6.9,3);

\node[red] at (4.5,1.7) {$\gamma$};

% indicate the periodic direction u
\draw[->,thick] (1.4,1.5) -- (3.4,1.5);
\node[above] at (2.4,1.5) {$u$};

% optional core curve
\draw[dashed] (0.6,1.0) -- (7.8,1.0);
\node[below] at (0.7,1.0) {$\gamma_{\mathrm{core}}$};

\end{tikzpicture}
\caption{A marked maximal cylinder and $\gamma$.}
\label{fig:cylinder-gamma}
\end{figure}

The two added boundary arcs $\overline{\sigma_1\xi_1}$ and
$\overline{\xi_2\sigma_2}$ are parallel to
$u$ and hence contribute no transverse displacement.
Thus, writing $v:=\int_\gamma\omega\in\Lambda_{\mathrm{rel}}$,
we have
\[
h|u|=|\det(u,v)|\in A_{\mathrm{rel}}\Z,
\]
which gives $h\in\delta\Z$.

Conversely, by balance, all marked maximal cylinders
in this direction have the same height $h$. As in the proof of Lemma~\ref{lem:relative-torus},
their transverse height coordinates modulo $h$ define
a continuous circle-valued map
\[
f:Y\longrightarrow\R/h\Z
\]
which vanishes on
$\Sigma$. If $u=|u|e^{i\vartheta}$, then
$df=\operatorname{Im}(e^{-i\vartheta}\omega)$
on the cylinder interiors. Consequently, for any
relative path $\gamma$ with endpoints
$p_-,p_+\in\Sigma$,
\[
\operatorname{Im}\left(
e^{-i\vartheta}\int_\gamma\omega
\right)
\equiv f(p_+)-f(p_-)\equiv0\pmod{h\Z}.
\]
The same holds for closed paths, so
\[
\operatorname{Im}
\left(e^{-i\vartheta}\Lambda_{\mathrm{rel}}\right)
\subseteq h\Z.
\]
Since $u$ is primitive in $\Lambda_{\mathrm{rel}}$,
the group on the left is $\delta\Z$.
Thus $\delta\in h\Z$. Together with
$h\in\delta\Z$ and $h,\delta>0$, this yields
$h=\delta$, proving \eqref{eq:exact-height}.

For two nonparallel directions, choose primitive
lattice vectors $u,v$ in those directions.
Their determinant is a nonzero integral multiple
of $A_{\mathrm{rel}}$, so
\[
 A_{\mathrm{rel}}
 \leq|\det(u,v)|
 \leq|u||v|
 =\frac{A_{\mathrm{rel}}^2}{h_1h_2}.
\]
Dividing by $A_{\mathrm{rel}}>0$ proves
\eqref{eq:determinant}.
\end{proof}

\subsection{Arithmeticity}\label{sec:arithmetic}

To apply the canonical torus covering to a pointed quadratic surface,
we first compare its cylinders with those on its orientation cover.

\begin{lemma}\label{lem:orientation-cylinders}
Let $q\in H^0(X,K_X^2(P))$ be nonsquare, with canonical orientation
cover $p:Y\to X$ and $p^*q=\omega^2$. Assume $\Sigma=Z(\omega)\cup p^{-1}(P)$. Each marked maximal cylinder on
$(Y,\omega,\Sigma)$ is exchanged
with a distinct cylinder by the deck involution and projects
isometrically to a marked maximal cylinder on $(X,P,q)$. In particular,
if $(X,P,q)$ is balanced, then so is $(Y,\omega,\Sigma)$.
\end{lemma}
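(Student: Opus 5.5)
Let $\sigma$ be the deck involution, so $\sigma^*\omega=-\omega$, $\sigma(\Sigma)=\Sigma$, and $p$ is a local isometry from $|\omega|^2$ to $|q|$ away from the branch points. We may assume the direction is horizontal. The branch points of $p$ are exactly the points over odd-order zeros of $q$ and over $P$. All of these lie in $\Sigma$, and they are the fixed points of $\sigma$.

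\textbf{Step 1: projection.} Fix a marked maximal cylinder $C\subset Y\setminus\Sigma$ in some direction. Since $\sigma$ sends $\omega$ to $-\omega$, it maps straight trajectories to straight trajectories of the same (unoriented) direction. Because $\sigma$ preserves $\Sigma$, it therefore permutes the marked maximal cylinders in that direction.

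On $C$ the map $p$ is unramified, since $C$ avoids $\Sigma$ and hence all branch points. So $p|_C$ is a local isometry onto an open subset of $X\setminus(P\cup Z(q))$ foliated by closed trajectories of $q$. Closed leaves of $C$ map to closed trajectories of $q$.

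\textbf{Step 2: $\sigma(C)\ne C$ and injectivity.} Suppose $\sigma(C)=C$. Then $\sigma$ is an isometry of the flat cylinder $(\R/L\Z)\times(0,h)$ with derivative $-1$. Any such map is a rotation by $\pi$, $(x,y)\mapsto(c-x,h-y)$. It has fixed points inside $C$, at $y=h/2$. This contradicts the fact that all fixed points of $\sigma$ lie in $\Sigma$. Hence $\sigma(C)$ is a distinct cylinder, so $C\cap\sigma(C)=\emptyset$.

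Since $p$ identifies precisely the $\sigma$-orbits, $p|_C$ is injective. Being also a local isometry, it is an isometry onto its image $p(C)$.

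\textbf{Step 3: maximality.} We show $p(C)$ is a marked maximal cylinder on $(X,P,q)$. The cylinder $p(C)$ avoids $P$ and the zeros of $q$, and its boundary lies in $p(\partial C)$. Every boundary component of $C$ contains points of $\Sigma$, and these points project into $Z(q)\cup P$. So both boundary components of $p(C)$ meet $Z(q)\cup P$, and $p(C)$ cannot be enlarged. Hence $p(C)$ is maximal.

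Conversely, every marked maximal cylinder $C'$ on $X$ has preimage $p^{-1}(C')$ which is unramified over $C'$. It therefore consists of marked maximal cylinders on $Y$, by the same boundary argument applied upstairs.

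\textbf{Step 4: balance.} A direction is periodic on $Y$ if and only if it is periodic on $X$, since $p^{-1}$ of the critical graph is the critical graph of $Y$. Suppose $(X,P,q)$ is balanced. Every marked cylinder on $Y$ in a periodic direction is isometric to a marked cylinder on $X$ in the same direction. Isometric flat cylinders have the same height, so all cylinders on $Y$ in that direction have the common height. Thus $(Y,\omega,\Sigma)$ is balanced.

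\textbf{Main obstacle.} The key step is Step 2, which excludes $\sigma$-invariant cylinders. It relies on the fact that $\sigma$ has no fixed points in $Y\setminus\Sigma$. This holds precisely because the simple poles, i.e.\ the points of $P$, are included in $\Sigma$ via $p^{-1}(P)$, and the odd-order zeros of $q$ are included in $Z(\omega)$.
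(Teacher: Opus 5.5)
Your proof is correct and follows the paper's own argument: you exclude $\sigma$-invariant cylinders because the half-turn $(x,y)\mapsto(c-x,h-y)$ would have fixed points in $C\subset Y\setminus\Sigma$, then use that $p$ is a local isometry away from $\Sigma$ to project each cylinder isometrically, lift cylinders conversely, and compare heights. Your Steps 2--4 spell out details the paper leaves implicit (injectivity of $p|_C$, maximality of $p(C)$, and the correspondence of periodic directions). The one slip is that not every point over $P$ is a branch point, only those over simple poles or odd-order zeros, but this does not affect the argument.
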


\begin{proof}
Let $\sigma$ be the deck involution, so that
$\sigma^*\omega=-\omega$. Every fixed point of
$\sigma$ belongs to $\Sigma$. If a marked maximal
cylinder $C$ were invariant under $\sigma$, then
in flat coordinates
\[
C\cong(\R/L\Z)\times(0,h),
\]
the involution would have the form
\[
(x,y)\longmapsto(-x+c,h-y),
\]
which has fixed points in $C$, a contradiction.
Thus $\sigma$ exchanges $C$ with a distinct
cylinder. Since $p$ is a local isometry away
from $\Sigma$, its restriction to $C$ is an
isometry onto a marked maximal cylinder
downstairs. Conversely, every marked maximal
cylinder downstairs lifts to such a pair.
The equality of heights follows.
\end{proof}

\begin{proof}[Proof of Theorem~\ref{thm:structure}\ref{it:arithmetic}]
Let $(X,P,q)$ generate the Teichm\"uller disk of $\mathcal C$,
as in Theorem~\ref{thm:structure}\ref{it:disk}.
Write $p:Y\to X$ for the canonical orientation cover and
$p^*q=w^2$; in the square case take $Y=X$ and
$p=\operatorname{id}$. Keep the distinguished set
$\Sigma=Z(w)\cup p^{-1}(P)$.

By Lemma~\ref{lem:absolute-periods}, the absolute periods of
$w$ form a rank-two lattice. In the nonsquare case,
Theorem~\ref{thm:structure}\ref{it:heights} and
Lemma~\ref{lem:orientation-cylinders} show that
$(Y,w,\Sigma)$ is balanced. In the square case, balance is
exactly the equal-height assertion on the original pointed surface.

The set $\Sigma$ is nonempty. Indeed, if $w$ has no zeros,
then $Y$ has genus one. Riemann--Hurwitz for $p$ gives $g\leq1$,
and stability forces $n\geq1$, so $p^{-1}(P)\ne\varnothing$. Lemma~\ref{lem:relative-torus} now gives a translation covering
\[
 F:Y\longrightarrow\C/\Lambda_{\mathrm{rel}}
\]
carrying every point of $\Sigma$ to the origin. Since
$F^*dz=w$, its ramification points are zeros of $w$,
all of which belong to $\Sigma$. Thus $F$ is ramified only over
the origin. Hence $\mathcal C$ is arithmetic. The associated
Veech group is also arithmetic by \cite{GutkinJudge}.
This proves Theorem~\ref{thm:structure}\ref{it:arithmetic}
and completes the proof of Theorem~\ref{thm:structure}.
\end{proof}

\begin{corollary}\label{coro:WPcurve}
Every closed algebraic WP totally geodesic curve
$\mathcal C\subset\M_{g,n}$ arises from a covering
construction of $\M_{0,4}$, possibly after a finite
base change.
\end{corollary}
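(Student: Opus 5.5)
We would derive Corollary~\ref{coro:WPcurve} directly from Theorem~\ref{thm:structure}\ref{it:arithmetic} and the torus covering of Lemma~\ref{lem:relative-torus}, by passing from a torus covering to a covering of the sphere with four marked points. Let $(X,P,q)$ generate $\mathcal C$, with orientation cover $p:Y\to X$ and $p^*q=w^2$. After applying an element of $\GL$, Theorem~\ref{thm:structure}\ref{it:arithmetic} gives a translation covering $F:Y\to E:=\C/\Lambda_{\mathrm{rel}}$, ramified only over $0$, with $F(\Sigma)=\{0\}$ and $F^*dz=w$. Let $\iota_E(z)=-z$ be the elliptic involution. The quotient $E/\iota_E\simeq\mathbb P^1$ is branched at the four $2$-torsion points, and $(dz)^2$ descends to a quadratic differential $q_0$ on $\mathbb P^1$ with simple poles at those four points. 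This is the quadratic differential generating all of $\M_{0,4}$ as a Teichm\"uller curve.

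\emph{Step 1: descend $F$ to $X$.} In the nonsquare case, the deck involution $\sigma$ satisfies $\sigma^*w=-w$. Since integration starts at a point of $\Sigma$ and $\sigma(\Sigma)=\Sigma$, the map $F\circ\sigma$ has derivative $-1$ and sends $\Sigma$ to $0$. Hence $F\circ\sigma=\iota_E\circ F$: both are translation-type maps with the same derivative that agree at a point of $\Sigma$. Thus $F$ descends to a holomorphic map $\bar F:X\to\mathbb P^1$ with $\bar F^*q_0=q$. In the square case, take $\bar F=\phi\circ F$ with $\phi:E\to\mathbb P^1$ the quotient. In both cases $P$ maps into the image of $0$, and the zeros of $q$ map into the four branch values. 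We also have to check that $\bar F$ is branched only over these four points, so that it is a covering of the $4$-pointed sphere. This follows because $F$ is unramified away from $0$ and $\phi$ is branched only at the $2$-torsion points.

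\emph{Step 2: the Teichm\"uller disks correspond.} Since $\bar F^*q_0=q$, the $\GL$-action commutes with pullback. Hence the Teichm\"uller disk of $(X,P,q)$ is the image of the disk of $(\mathbb P^1,\{4\text{ points}\},q_0)$ under the covering construction $Y'\mapsto$ pullback cover. The resulting map $\M_{0,4}\dashrightarrow\M_{g,n}$ is defined on a finite cover of $\M_{0,4}$, namely the one recording the monodromy of $\bar F$ together with the ordering of the marked points. Its image is therefore $\mathcal C$, up to the finite base change mentioned in the statement.

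The main obstacle is bookkeeping of the marked points. Marked points of $X$ must be preimages of the four special points, but not every preimage of a special point need be in $P$. Zeros of $q$ also lie over the $2$-torsion points. The covering construction must therefore forget some preimages, or mark only the prescribed subset, and we must verify that the pointed monodromy is invariant under the relevant finite-index subgroup. We would handle this by passing to the finite cover of $\M_{0,4}$ on which a level structure fixes the monodromy of $\bar F$. On that cover the subset $P$ of fibers over the special points is determined consistently, so the construction is well defined.
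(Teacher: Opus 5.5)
Your proposal is correct and follows essentially the same route as the paper: you use $\sigma^*w=-w$, $\sigma(\Sigma)=\Sigma$ and $F(\Sigma)=\{0\}$ to get $F\circ\sigma=-F$, descend the composite with the quotient $\epsilon:T\to T/\{\pm1\}\simeq\mathbb P^1$ to $h:X\to\mathbb P^1$ with $q=h^*q_0$, and check that $P$ and all branching lie over the four points $\epsilon(T[2])$. The only difference is the marked-point bookkeeping. The paper settles it with $\operatorname{ord}_x(q)=e_x-2$, which gives $h^{-1}(\epsilon(T[2]))=P\cup\operatorname{Ram}(h)$, so exactly the unmarked ramification points are forgotten, and it phrases the result directly as a covering construction $\T_{0,4}\to\T_{g,n}$ rather than through a level structure on $\M_{0,4}$.
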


\begin{proof}
Keep the notation of the proof of
Theorem~\ref{thm:structure}\ref{it:arithmetic}.
By Lemma~\ref{lem:relative-torus}, there is a
translation covering
\[
 F:Y\longrightarrow \C/\Lambda_{\mathrm{rel}},
 \qquad F^*dz=\omega,\qquad F(\Sigma)=\{0\},
\]
ramified only over the origin.
Let $T=\mathbb C/\Lambda_{\mathrm{rel}}$ and
\[
\epsilon:\C/\Lambda_{\mathrm{rel}}\longrightarrow T/\{\pm1\}\simeq\mathbb P^1
\]
be the quotient map.
Thus $\epsilon(T[2])$ consists of four points. The quadratic
differential $dz^2$ descends to a quadratic differential
$q_0$ on $\mathbb P^1$ with simple poles exactly
at $\epsilon(T[2])$.

When $q$ is nonsquare, the deck involution
satisfies $\sigma^*\omega=-\omega$. Since
$\sigma(\Sigma)=\Sigma$ and $F(\Sigma)=\{0\}$,
we have $F\circ\sigma=-F$. Consequently
$\epsilon\circ F$ descends through $p:Y\to X$
to a finite covering
\[
 h:X\longrightarrow\mathbb P^1,
 \qquad h\circ p=\epsilon\circ F.
\]
When $q$ is a square, simply take $h=\epsilon\circ F$.
In either case,
\[
 q=h^*q_0.
\]

The branch values of $h$ lie in $\epsilon(T[2])$, and
$h(P)\subseteq \epsilon(T[2])$. At a point $x\in h^{-1}(\epsilon(T[2]))$
with ramification index $e_x$, the pullback
formula gives $\operatorname{ord}_x(q)=e_x-2.$
Since $q$ has poles only at $P$, it follows that
\[
 h^{-1}(\epsilon(T[2]))=P\cup\operatorname{Ram}(h).
\]
Thus $h$ defines a covering construction from
$\mathcal T_{0,4}$ to $\mathcal T_{g,n}$, with
unmarked ramification points forgotten.
\end{proof}

\subsection{Bounding the degree}
In this subsection only, take $G\geq2$ and assume $\Sigma=Z(\omega)$. Let $\mathbf k=(k_1,\ldots,k_s)$ be a partition of $2G-2$;
$\mathcal H_1(\mathbf k)$ denotes the stratum of area-one Abelian
differentials whose zeros have orders $k_1,\ldots,k_s$.

We use the following consequence of \cite[Theorem~2.3]{EMM}: the
canonical ergodic $\SL$-invariant probability measures of closed
$\SL$-orbits in an Abelian-differential stratum $\mathcal H_1(\mathbf k)$ are sequentially compact in the weak-star topology, with limits
supported on affine invariant submanifolds of the same stratum; in
particular the limit has total mass one and no mass escapes through
the Deligne--Mumford boundary. Square-tiled surfaces have closed
orbits \cite{GutkinJudge}.

\begin{theorem}\label{thm:flat}
For each integer $G\geq2$, there are only finitely many $\GL$-orbits
of balanced translation surfaces of genus $G$ whose absolute periods
form a rank-two lattice. 
\end{theorem}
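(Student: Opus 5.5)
Since $G\ge2$ and there are finitely many partitions $\mathbf k$ of $2G-2$, I will fix one stratum $\mathcal H_1(\mathbf k)$ and argue by contradiction. Take $\Sigma=Z(\omega)$, which is nonempty, and normalize the area to one. Lemma~\ref{lem:relative-torus} then says every balanced surface with lattice absolute periods is square-tiled relative to $\Lambda_{\mathrm{rel}}$, so its $\SL$-orbit is closed. The orbit carries a canonical $\SL$-invariant probability measure $\mu$. The degree $N=N(Y,\omega,\Sigma)=1/A_{\mathrm{rel}}$ of \eqref{eq:degree} is an orbit invariant. For fixed $N$ there are only finitely many square-tiled surfaces with $N$ squares in the stratum, so only finitely many orbits. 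Hence it suffices to bound $N$. Suppose instead there are infinitely many such orbits $\mathcal O_j$ with $N_j\to\infty$, so $A_{\mathrm{rel},j}\to0$.

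The key geometric input is Lemma~\ref{lem:height}. On a surface in $\mathcal O_j$, every periodic direction has a common cylinder height $h=A_{\mathrm{rel},j}/|u|$, where $u$ is the primitive relative period in that direction. By \eqref{eq:determinant}, any two nonparallel directions have heights satisfying $h_1h_2\le A_{\mathrm{rel},j}\to0$. I would turn this into the statement that, at every point of $\mathcal O_j$, the surface has a short saddle connection, with length tending to $0$ uniformly. The idea is as follows. Pick a shortest relative period $u$, of length $\le C\sqrt{A_{\mathrm{rel},j}}$ by Minkowski's theorem applied to $\Lambda_{\mathrm{rel}}$. Its direction is completely periodic, and since $\Sigma=Z(\omega)$ every cylinder boundary consists of saddle connections whose holonomies are multiples of $u$. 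The cylinders in that direction have height $A_{\mathrm{rel}}/|u|$. Their total area is $1$, so the total circumference is $|u|/A_{\mathrm{rel}}$, split over boundary saddle connections. I would also use the direction of a second reduced basis vector to extract a saddle connection of length $O(\sqrt{A_{\mathrm{rel}}})$. If that fails, the cylinders in the $u$-direction have circumference at least $|u|$ and modulus roughly $A_{\mathrm{rel}}/|u|^2$, which I would combine with the count of cylinders (at most $G-1+s$) to force a short curve somewhere.

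From this I expect: for every $\varepsilon>0$ and $j$ large, $\mathcal O_j$ lies in the $\varepsilon$-thin part $\{\text{shortest saddle connection}<\varepsilon\}$ of $\mathcal H_1(\mathbf k)$. By the EMM compactness cited above, a subsequence $\mu_j$ converges weak-star to a probability measure $\mu_\infty$ supported on an affine invariant submanifold of $\mathcal H_1(\mathbf k)$. The thin parts shrink to empty as $\varepsilon\to0$, so $\mu_\infty$ would give mass zero to the complement of every thin part while being a probability measure on the stratum. This is the desired contradiction, and it bounds $N$.

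The main obstacle is the uniform short-saddle-connection claim. A bound only at the base point is useless. One needs the bound at every point of the orbit, or at least on a set of $\mu_j$-mass bounded below, and it must hold for saddle connections rather than merely relative periods. My first attempt is to apply the Minkowski argument to $g\cdot(Y,\omega)$ directly, using that $A_{\mathrm{rel}}$ and balance are $\SL$-invariant, so the bound is automatic at every orbit point. The remaining step is to realize a short relative period by a short saddle connection. I would do this via the cylinder decomposition in the shortest direction: there each boundary saddle connection has holonomy an integer multiple of $u$. I must then rule out the case where all boundary saddle connections are long multiples, and I would use the height formula \eqref{eq:exact-height} together with area one to do so.
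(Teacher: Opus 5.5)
Your reduction is sound: fix one stratum, use square-tiledness from Lemma~\ref{lem:relative-torus}, note there are finitely many surfaces with $N$ squares, so it suffices to bound $N$. Your endgame is also sound: if, for every $\varepsilon>0$, all but finitely many $\mathcal O_j$ lie in the $\varepsilon$-thin part, then the EMM limit gives zero mass to each open set $\{\text{shortest saddle connection}>\varepsilon\}$, contradicting that it is a probability measure on $\mathcal H_1(\mathbf k)$. The gap is exactly the step you flag, and the tools you propose cannot fill it.

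In the direction of a shortest $u\in\Lambda_{\mathrm{rel}}$, the saddle connections have holonomies $k_iu$ with $k_i\in\Z_{>0}$. There are $2G-2+s$ of them, and their total length is the total circumference $|u|/A_{\mathrm{rel}}=N|u|$. So area one and \eqref{eq:exact-height} give only $\sum_i k_i=N$. This is compatible with every $k_i$ being of order $N$, i.e.\ all these saddle connections having length of order $N|u|$ rather than $|u|$. Your fallback via moduli runs the wrong way. Since the core holonomies are nonzero multiples of $u$, the cylinders in this direction have modulus at most $A_{\mathrm{rel}}/|u|^2$. But producing a short core curve on an area-one surface needs a lower bound on the modulus of some cylinder. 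The second reduced vector has the same defect. So Minkowski plus cylinder counting does not exclude a balanced surface with small $A_{\mathrm{rel}}$ all of whose saddle connections are bounded below in length.

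The missing idea is a compactness argument rather than lattice geometry. By Masur's criterion, the $\varepsilon$-thick part $K_\varepsilon\subset\mathcal H_1(\mathbf k)$ is compact. Each of its points has cylinders in two nonparallel directions, and these persist with heights bounded below on a neighborhood. Covering $K_\varepsilon$ by finitely many such neighborhoods gives $c(\varepsilon)>0$ such that every surface in $K_\varepsilon$ has cylinders in two nonparallel directions of height at least $c(\varepsilon)$.

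On a surface with lattice relative periods these directions are periodic, since core holonomies lie in $\Lambda_{\mathrm{abs}}\subset\Lambda_{\mathrm{rel}}$. The maximal cylinders containing these cylinders are at least as tall, so balance and \eqref{eq:determinant} give $A_{\mathrm{rel}}\ge c(\varepsilon)^2$. Since $A_{\mathrm{rel}}$ and balance are $\SL$-invariant, $A_{\mathrm{rel},j}<c(\varepsilon)^2$ puts all of $\mathcal O_j$ in the $\varepsilon$-thin part. That is your claim, and your ending then goes through. In fact, any nondivergence statement that closed $\SL$-orbits meet a fixed thick part would already bound $N$.

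This is the same input the paper uses: Masur's two cylinder directions, persistence of cylinders, and \eqref{eq:determinant}. The paper applies it on a neighborhood of a point in the support of the EMM limit measure, which $\mu_j$ charges for large $j$, instead of on a thick part.
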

\begin{proof}

\emph{Bounding the canonical degree.}
Suppose that the degrees in \eqref{eq:degree} were unbounded,
and choose a sequence $(Y_j,\omega_j)$ with degrees
$N_j\to\infty$. Normalize their areas to one, so the
relative-lattice covolumes are $a_j=N_j^{-1}\to0$.
After passing to a subsequence, the surfaces lie in a fixed
connected component of an area-one Abelian stratum
$\mathcal H_1(\mathbf k)$.
By Lemma~\ref{lem:relative-torus}, they are square-tiled
up to $\GL$, so their $\SL$-orbits
\[
\mathcal U_j:=\SL\cdot(Y_j,\omega_j)
\]
are closed \cite{GutkinJudge}.
Let $\mu_j$ be the invariant probability measure on
$\mathcal U_j$. By \cite[Theorem~2.3]{EMM}, after passing
to a subsequence, $\mu_j$ converges weakly to an affine
probability measure $\mu$ on $\mathcal H_1(\mathbf k)$.

Choose $(Y_\infty,\omega_\infty)\in\operatorname{supp}\mu$.
By \cite{MasurCylinders}, it contains cylinders in two
nonparallel directions. Cylinders persist under sufficiently
small deformations, with their directions and heights varying
continuously \cite[\S5.2]{WrightSurvey}.
Thus there exist an open neighborhood $V$ of
$(Y_\infty,\omega_\infty)$ in $\mathcal H_1(\mathbf k)$
and a constant $c>0$ such that every surface in $V$
contains cylinders in two nonparallel directions,
each of height at least $c$.

Since $\mu(V)>0$, weak convergence gives
$\mu_j(V)>0$ for all sufficiently large $j$.
Choose $(\widehat Y_j,\widehat\omega_j)\in
\mathcal U_j\cap V$.
This surface remains balanced, and its relative-lattice
covolume is $a_j$, since both are $\SL$-invariant.
Let $H_{1,j},H_{2,j}\geq c$ be the heights of the
corresponding maximal cylinders.
By \eqref{eq:determinant},
\[
a_j\geq H_{1,j}H_{2,j}\geq c^2,
\]
contradicting $a_j\to0$.
Hence the canonical degree is bounded in terms of $G$.

\emph{Counting the coverings.}
Normalize each relative lattice to $\Z+i\Z$.
Lemma~\ref{lem:relative-torus} gives a connected square-tiled
surface of bounded degree $N$. Such a surface is encoded by
a transitive pair of permutations in $S_N\times S_N$,
specifying the identifications of the right and upper edges.
There are only finitely many such pairs for bounded $N$.
Hence there are only finitely many surfaces in this
normalization and, therefore, finitely many $\GL$-orbits.
\end{proof}

\subsection{Finiteness of arithmetic Teichm\"uller curves
arising from balanced surfaces}
We use the following lemma to relax the condition
$\Sigma=Z(\omega)$ in Theorem~\ref{thm:flat} and deduce finiteness Theorem~\ref{thm:marked-flat}.
\begin{lemma}\label{lem:mark-cover}
Let $(Y,\omega,\Sigma)$ have genus $G\geq1$, and let
$R:=\Sigma\setminus Z(\omega)$ have cardinality $r\geq1$. Then there is a connected regular translation covering
\[
 F:(Z,\eta)\longrightarrow(Y,\omega),\qquad \eta=F^*\omega,
\]
of degree six, with deck group $S_3$, branched exactly over $R$
and with ramification index three at every point above $R$.
Its genus $G_Z$ is
\begin{equation}\label{eq:mark-cover-genus}
 G_Z=6G-5+2r\geq3,
\end{equation}
and $F^{-1}(\Sigma)=Z(\eta)$. If the marked surface $(Y,\omega,\Sigma)$ is
balanced and has lattice absolute periods, then $(Z,\eta)$ is
balanced with respect to $Z(\eta)$ and has lattice absolute periods.
\end{lemma}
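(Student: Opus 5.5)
We construct $F$ from a surjection of the fundamental group of the punctured surface onto $S_3$. Write $R=\{x_1,\ldots,x_r\}$ and $Y^\circ=Y\setminus R$. Then $\pi_1(Y^\circ)$ has the presentation
\[
\Bigl\langle a_1,b_1,\ldots,a_G,b_G,c_1,\ldots,c_r \Bigm| \textstyle\prod_{k}[a_k,b_k]\,c_1\cdots c_r=1\Bigr\rangle,
\]
where $c_j$ is a small loop around $x_j$. We seek a surjection $\phi:\pi_1(Y^\circ)\to S_3$ with every $\phi(c_j)$ a $3$-cycle.

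\emph{Construction of $\phi$.} Set $\phi(c_j)=\rho$ for all $j$, where $\rho=(123)$, so that $\phi(c_1\cdots c_r)=\rho^{r}$. Since $G\ge1$, we need a commutator $[\phi(a_1),\phi(b_1)]$ equal to $\rho^{-r}$, and then set $\phi(a_k)=\phi(b_k)=1$ for $k\ge2$.
\begin{itemize}
\item[] If $r\equiv0\pmod 3$, the target is $1$. Take $\phi(a_1)=\phi(b_1)=(12)$. The image then contains $\rho$ and $(12)$, so $\phi$ is onto.
\item[] Otherwise the target is a $3$-cycle. It equals $[(12),(13)]$ or its inverse, so we may take $\{\phi(a_1),\phi(b_1)\}=\{(12),(13)\}$ in a suitable order. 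Again $\phi$ is onto.
\end{itemize}
The kernel of $\phi$ defines a connected regular cover of $Y^\circ$ with deck group $S_3$ and degree $6$. It extends to a branched cover $F:Z\to Y$. Each $x_j$ has local monodromy of order $3$, so the ramification index is $3$ at all points above $R$, with $6/3=2$ such points over each $x_j$. Away from $R$ the cover is unbranched, so $F$ is a translation covering for $\eta=F^*\omega$.

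\emph{Genus and zeros.} Riemann--Hurwitz gives
\[
2G_Z-2=6(2G-2)+2r\cdot 2,
\]
so $G_Z=6G-5+2r$. Since $G,r\ge1$, this is at least $3$. Now compute zero orders. Above a zero of $\omega$ of order $k$, the cover is unramified, so the preimages are zeros of $\eta$ of order $k$. Above $x_j\in R$, $\omega$ is nonvanishing and $F$ has local degree $3$, so $\eta$ has a zero of order $2$ at each preimage. Hence $F^{-1}(\Sigma)=Z(\eta)$.

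\emph{Lattice periods.} For a closed curve $\gamma$ on $Z$, we have $\int_\gamma\eta=\int_{F_*\gamma}\omega$. Here $F_*\gamma$ is a closed curve on $Y$, so absolute periods of $\eta$ lie in $\Lambda_{\mathrm{abs}}(\omega)$. The subgroup they form has finite index, since $F_*$ on $H_1$ has image containing $6H_1(Y)$ via transfer. It is therefore again a rank-two lattice.

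\emph{Balance, the main obstacle.} The key claim is that $F$ sends marked maximal cylinders of $(Z,\eta,Z(\eta))$ onto marked maximal cylinders of $(Y,\omega,\Sigma)$, preserving height. Fix a direction $\vartheta$.
\begin{itemize}
\item[] Periodicity: $\vartheta$ is periodic on $Z$ if and only if it is periodic on $Y$. A trajectory upstairs is closed or critical if and only if its image is, because $F$ has finite degree and maps saddle connections to saddle connections. This uses that $F^{-1}(\Sigma)=Z(\eta)$.
\item[] Cylinder images: let $C$ be a marked maximal cylinder upstairs. Then $F(C)$ avoids $\Sigma$, since $C$ avoids $F^{-1}(\Sigma)$. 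So $F(C)$ lies in a single cylinder $C'$ downstairs.
\item[] Height: $F^{-1}(C')$ is a finite unbranched cover of the annulus $C'$, namely a disjoint union of annuli of the same height. Its boundary maps into the critical graph of $C'$, which consists of saddle connections through $\Sigma$. Hence each component is a full maximal cylinder upstairs, of height equal to that of $C'$.
\end{itemize}
The care needed is exactly that boundaries correspond: preimages of the regular markings become genuine zeros, so maximality upstairs matches maximality downstairs. Since all cylinders of $Y$ in direction $\vartheta$ share one height, so do those of $Z$. This proves that $(Z,\eta)$ is balanced with respect to $Z(\eta)$.
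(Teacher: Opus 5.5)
Your proposal is correct and follows essentially the same route as the paper: the same surjection $\pi_1(Y\setminus R)\to S_3$ sending every $c_j$ to $(123)$, with $a_1,b_1$ chosen so that their commutator cancels $(123)^{-r}$; Riemann existence plus Riemann--Hurwitz for the genus; the zero count giving $F^{-1}(\Sigma)=Z(\eta)$; cylinders upstairs as components of preimages of cylinders downstairs for balance; and transfer for the lattice of absolute periods. The differences are only cosmetic: you send both $a_1$ and $b_1$ to $(12)$ when $3\mid r$, where the paper sends $b_1$ to the identity, and you spell out the cylinder-lifting and maximality step in more detail than the paper does.
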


\begin{proof}
Write the fundamental group of the punctured surface as
\[
 \pi_1(Y\setminus R)=
 \left\langle a_i,b_i,c_j\ \middle|\
       \prod_{i=1}^G[a_i,b_i]\prod_{j=1}^r c_j=1\right\rangle.
\]
Let $\varpi=(123)\in S_3$, and send every $c_j$ to $\varpi$.
We distinguish two cases:
\begin{enumerate}
\item[(1)] If $3\mid r$, send $a_1$ to $(12)$ and all
$b_i$ and remaining handle generators to the identity.

\item[(2)] If $3\nmid r$, send $a_1$ and $b_1$ to two
distinct transpositions whose commutator is
$\varpi^{-r}$, and all remaining handle generators
to the identity.
\end{enumerate}
In either case, we have a surjective homomorphism from $\pi_1(Y\setminus R)$ to $S_3$. By the Riemann existence theorem, the associated unramified covering of
$Y\setminus R$ extends to a holomorphic branched
covering $F:Z\to Y$.

Since the cover has degree six and each peripheral
monodromy has order three, every branch value has
exactly two preimages, each of ramification index three. Riemann--Hurwitz gives
$2G_Z-2=6(2G-2)+4r$, proving \eqref{eq:mark-cover-genus}.
Every regular marked point lifts to a zero of order two, all old
zeros lift to zeros, and there are no other zeros; hence
$F^{-1}(\Sigma)=Z(\eta)$.

A cylinder downstairs lifts to cylinders with the same height,
possibly with circumferences multiplied by an integer. Conversely,
every cylinder upstairs is a component of the preimage of a marked
maximal cylinder downstairs: its closed regular trajectories project
to closed regular trajectories, and its boundary cannot cross
$F^{-1}(\Sigma)=Z(\eta)$. Periodicity and equal heights therefore
pass to the cover. Finally, transfer of integral cycles gives
\[
 6\Lambda_{\mathrm{abs}}(Y,\omega)
 \subset\Lambda_{\mathrm{abs}}(Z,\eta)
 \subset\Lambda_{\mathrm{abs}}(Y,\omega).
\]
Thus the absolute periods upstairs are also a rank-two lattice.
\end{proof}

\begin{theorem}\label{thm:marked-flat}
Fix $G\geq1$ and $r_0\geq0$. There are only finitely many
$\GL$-orbits of balanced marked translation surfaces
$(Y,\omega,\Sigma)$ of genus $G$ with lattice absolute periods,
where $Z(\omega)\subseteq\Sigma\ne\emptyset$ and $\#(\Sigma\setminus Z(\omega))\leq r_0$. 
\end{theorem}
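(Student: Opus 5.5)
The plan is to reduce to Theorem~\ref{thm:flat} through the marking cover of Lemma~\ref{lem:mark-cover}, and then to pass from finiteness upstairs to finiteness downstairs using the bounded degree of the canonical torus covering. Split by $r=\#(\Sigma\setminus Z(\omega))\in\{0,\ldots,r_0\}$; it suffices to treat each $r$ separately.

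\emph{Case $r=0$.} Then $\Sigma=Z(\omega)\neq\emptyset$, so $G\geq2$, and Theorem~\ref{thm:flat} applies directly.

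\emph{Case $r\geq1$.} Apply Lemma~\ref{lem:mark-cover} to obtain the degree-six $S_3$-cover $F:(Z,\eta)\to(Y,\omega)$. The surface $(Z,\eta)$ has the following properties:
\begin{enumerate}
\item its genus is $G_Z=6G-5+2r\geq3$, which takes only finitely many values;
\item it is balanced with respect to $Z(\eta)=F^{-1}(\Sigma)$;
\item its absolute periods form a rank-two lattice.
\end{enumerate}
By Theorem~\ref{thm:flat}, as $(Y,\omega,\Sigma)$ varies, $(Z,\eta)$ ranges over finitely many $\GL$-orbits. To transfer finiteness back down, I would avoid arguing about how many quotients a given $(Z,\eta)$ has. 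Instead I would use the degree directly. The proof of Theorem~\ref{thm:flat} bounds the canonical degree $N(Z,\eta,Z(\eta))$ by a constant depending only on $G_Z$. Relative periods of $\eta$ relative to $F^{-1}(\Sigma)$ are images of relative periods of $\omega$ relative to $\Sigma$ under $F_*$. Conversely, by transfer, $6\Lambda_{\mathrm{rel}}(Y)\subset\Lambda_{\mathrm{rel}}(Z)\subset\Lambda_{\mathrm{rel}}(Y)$. Also $\Area(Z)=6\Area(Y)$. Both lattices are rank two by Lemma~\ref{lem:relative-torus}, so
\[
N(Y,\omega,\Sigma)=\frac{\Area(Y)}{\covol\Lambda_{\mathrm{rel}}(Y)}\leq\frac{\Area(Z)/6}{\covol\Lambda_{\mathrm{rel}}(Z)/36}=6\,N(Z,\eta,Z(\eta)),
\]
which is bounded in terms of $G$ and $r_0$.

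With the degree bounded, normalize $\Lambda_{\mathrm{rel}}(Y)$ to $\Z+i\Z$. By Lemma~\ref{lem:relative-torus}, $(Y,\omega,\Sigma)$ is then a square-tiled surface with at most $6N_{\max}$ squares and all of $\Sigma$ at vertices. Such surfaces are encoded by transitive pairs of permutations in $S_N\times S_N$, together with a choice of $\Sigma$ among the finitely many vertices. Hence there are finitely many of them, and therefore finitely many $\GL$-orbits.

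The main obstacle is making sure the degree bound from Theorem~\ref{thm:flat} is genuinely available as a bound on $N$, rather than merely as an orbit count. One also has to watch for non-connectedness or non-primitive situations in the lattice comparison. If the statement of Theorem~\ref{thm:flat} is used only as an orbit count, I would alternatively note the following: finitely many orbits give finitely many values of $N(Z)$, since $N$ is $\GL$-invariant, and the inequality above still bounds $N(Y)$. Either way, the inclusion $6\Lambda_{\mathrm{rel}}(Y)\subset\Lambda_{\mathrm{rel}}(Z)$ is the key check. It holds because each relative cycle on $Y$ lifts, by summing over its six preimages, to a relative cycle on $Z$ whose period is six times the original.
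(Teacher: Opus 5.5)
Your proof is correct. It follows the paper up to the point where Theorem~\ref{thm:flat} gives finiteness for the covers $(Z,\eta)$; the two arguments differ only in how that finiteness is pushed back down to $(Y,\omega,\Sigma)$.

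The paper fixes a representative of each of the finitely many $\GL$-orbits of $(Z,\eta)$. It then notes that $(Y,\omega)$ is the quotient by an $S_3$ subgroup of the finite group $\Aut(Z)$, and that $\Sigma$ is recovered as the image of $Z(\eta)$. So only finitely many quotients can occur.

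You instead transfer the degree bound. From $\Lambda_{\mathrm{rel}}(Z)\subset\Lambda_{\mathrm{rel}}(Y)$ and $6\Lambda_{\mathrm{rel}}(Y)\subset\Lambda_{\mathrm{rel}}(Z)$ you get $N(Y,\omega,\Sigma)\le 6N(Z,\eta,Z(\eta))$. You then apply Lemma~\ref{lem:relative-torus} directly to $(Y,\omega,\Sigma)$ and count square-tiled surfaces with boundedly many squares together with a choice of marked vertices.

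What each route buys:
\begin{enumerate}
\item Your route uses the $S_3$-cover only to obtain the degree bound, then reruns the permutation count of Theorem~\ref{thm:flat} with markings. It therefore never needs to discuss deck groups or automorphism groups, or how these interact with the choice of orbit representative.
\item The paper's route is shorter and needs only the orbit-count form of Theorem~\ref{thm:flat}. As you note yourself, that form also suffices for your argument, since $N$ is $\GL$-invariant.
\end{enumerate}

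A small sharpening of your lattice comparison: the transfer over all six preimages is not needed. Any class in $H_1(Y,\Sigma;\Z)$ is represented by paths with endpoints in $\Sigma$. A single lift of such a path joins points of $F^{-1}(\Sigma)=Z(\eta)$ and has the same period. Hence $\Lambda_{\mathrm{rel}}(Z)=\Lambda_{\mathrm{rel}}(Y)$, and in fact $N(Y,\omega,\Sigma)=N(Z,\eta,Z(\eta))/6$.
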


\begin{proof}
If $\Sigma=Z(\omega)\ne\emptyset$, then $G\geq2$ and
Theorem~\ref{thm:flat} applies. Otherwise,
Lemma~\ref{lem:mark-cover} gives a translation covering
$(Z,\eta)\to(Y,\omega)$ with
$3\leq G_Z\leq6G-5+2r_0$.
By Theorem~\ref{thm:flat}, there are finitely many
possibilities for $(Z,\eta)$ up to $\GL$.
Since $\Aut(Z)$ is finite, only finitely many
deck groups and hence quotients occur.
\end{proof}

\subsection{Finiteness conclusion}

\begin{proof}[Proof of Theorem~\ref{thm:finiteness}]
Let $\mathcal C\subset\M_{g,n}$ be an algebraic WP totally
geodesic curve, generated by a pointed quadratic differential
$(X,P,q)$ as in Theorem~\ref{thm:structure}.
Let $p:Y\to X$ be its orientation cover, with
$p^*q=w^2$ and distinguished set $\Sigma$ as in
\eqref{eq:distinguished}; in the square case take
$p=\operatorname{id}$.
The proof of
Theorem~\ref{thm:structure}\ref{it:arithmetic}
shows that $(Y,w,\Sigma)$ is balanced, has lattice
absolute periods, and satisfies $\Sigma\ne\varnothing$.
Moreover, $\Sigma$ contains at most $2n$ regular points.

Write $G_Y\geq1$ for the genus of $Y$.
In the nonsquare case, the cover branches over the
odd-order zeros and simple poles of $q$, of which there
are at most $4g-4+2n$. Riemann--Hurwitz therefore gives
\[
G_Y\leq4g-3+n.
\]
In the square case, $G_Y=g$.
Thus Theorem~\ref{thm:marked-flat}, with $r_0=2n$,
gives finitely many possibilities for
$(Y,w,\Sigma)$ up to $\GL$.

Fix a representative of each such orbit.
In the nonsquare case, transporting the orientation
involution to this representative by a real affine map
preserves its derivative $-I$. It is therefore a
holomorphic involution $\sigma$ satisfying
\[
\sigma^*w=-w,
\qquad
\sigma(\Sigma)=\Sigma.
\]
Since $\Sigma\ne\varnothing$, the group
$\Aut(Y,\Sigma)$ is finite, including when $G_Y=1$.
Hence only finitely many such involutions occur.

For each admissible involution, the quotient
$X=Y/\langle\sigma\rangle$ and the descended quadratic
differential $q$ are determined. The ordered markings
are chosen from the finite set $p(\Sigma)$, subject to
$\Sigma=Z(w)\cup p^{-1}(P)$, so there are only
finitely many choices.
In the square case, $X=Y$ and $q=w^2$ are already
determined, and the same finite marking argument applies.
We obtain finitely many pointed quadratic $\GL$-orbits,
hence finitely many possible moduli images $\mathcal C$.
\end{proof}

\appendix
\section{Algebraic extension of the square-root splitting}
\label{app:algebraic-splitting}

We prove here the extension statement used in
Lemma~\ref{lem:factor}\ref{it:existence}. 

\begin{lemma}
\label{lem:algebraic-splitting}
In the setting of the proof of Lemma~\ref{lem:factor}, the maps
$\lambda_V:\mathcal L|_V\to E|_V$ and
$r_V:E|_V\to\mathcal L|_V$ defined there extend uniquely to
morphisms of vector bundles
\[
 \lambda:\mathcal L\longrightarrow E,\qquad
 r:E\longrightarrow\mathcal L
\]
on $B'$. They satisfy $r\lambda=\operatorname{id}_{\mathcal L}$.
\end{lemma}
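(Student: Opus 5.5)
Since $B'$ is a smooth curve and $E$, $\mathcal L$ are vector bundles, a morphism defined on the dense open set $V$ extends uniquely to all of $B'$ once it is known to be bounded near each point of $B'\setminus V$. Such a morphism is given in local frames by a matrix of meromorphic functions. Boundedness rules out poles, and uniqueness is automatic because $V$ is dense and the target is torsion free. So I will check, at each point $b_0\in B'\setminus V$, that the matrix coefficients of $\lambda_V$ and $r_V$ have no poles. I will use the Hodge norm on $E$ and the harmonic metric $h_0$ on $\mathcal L$ only as auxiliary tools. The identity $r\lambda=\operatorname{id}$ then holds on $V$, hence everywhere by density.

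\emph{Extension of $\lambda$.} Let $a$ be a local generator of $\mathcal L$ near $b_0$. The section $q_a:=\beta'(a^{\otimes 2})$ is a holomorphic section of $f'_*\omega^{\otimes2}_{\mathcal X'/B'}(S')$, since $\beta$ is logarithmically holomorphic by Proposition~\ref{prop:splitting}. Hence $\int_{X'_{b}}|q_a|$ grows at most like $\log(1/|t|)$; this is the estimate \eqref{eq:cusp-L1} after base change. Near auxiliary interior points it stays bounded. Consequently
\[
\|w_a\|^2=\int_{Y_b}|w_a|^2=\tau\int_{X'_b}|q_a|=O(\log(1/|t|)).
\]
Sections of the Deligne extension $E$ are exactly the holomorphic sections on the punctured disk whose Hodge norm grows at most polylogarithmically (Schmid's norm estimates, unipotent monodromy). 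Therefore $w_a$ extends as a section of $E$. Equivalently, $w_a$ is a relative one-form on $\mathcal Y$ with at worst log poles at the nodes, and it lies in $g_*\omega_{\mathcal Y/B'}$. Algebraically, the tautological root $\vartheta_1$ in \eqref{eq:tautological-root} is a regular section of $\varpi^*\mathcal M$. Its pullback to the normal model $\mathcal Y$ is a rational section of $\omega_{\mathcal Y/B'}$ whose square is regular, and normality of $\mathcal Y$ then forces it to be regular. This gives a second, norm-free route that I would present as the main argument.

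\emph{Extension of $r$.} Let $\xi$ be a local frame element of $E$ near $b_0$, viewed as a section of $g_*\omega_{\mathcal Y/B'}$. The product $w_a\xi$ is a section of $\omega^{\otimes 2}_{\mathcal Y/B'}$, anti-invariant times anti-invariant in the nonsquare case. Its normalized trace $\tau^{-1}\operatorname{Tr}_p(w_a\xi)$ is a rational section of $f'_*\omega^{\otimes2}_{\mathcal X'/B'}(S')$. The branch-point computation in Lemma~\ref{lem:factor} shows it is regular along the generic fiber. The remaining issue is regularity in codimension one over $b_0$. Here I would use the $L^1$ bound
\[
\int_{X'_b}|\operatorname{Tr}(w_a\xi)|\le \|w_a\|\,\|\xi\|,
\]
which by the estimates above grows at most polylogarithmically in $\log(1/|t|)$. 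Lemma~\ref{lem:polylog-extension} then says that the trace extends holomorphically as a stable quadratic differential. Applying the logarithmic map $\kappa'$, which is holomorphic on all of $B'$ by Definition~\ref{def:kappa}, gives a holomorphic section of $\mathcal L^{\otimes2}$. Dividing by $a$ gives $r(\xi)$ holomorphic. At auxiliary interior points the same argument works with bounded norms.

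\emph{Main obstacle.} The hardest step is the precise comparison between the Deligne canonical extension $E$ and the direct image $g_*\omega_{\mathcal Y/B'}$ near the cusps, together with the growth bound $\|\xi\|=O((\log 1/|t|)^{1/2})$ for frame sections. For semistable $g$ with unipotent monodromy, $g_*\omega_{\mathcal Y/B'}$ is the upper canonical extension, and Schmid/Cattani–Kaplan–Schmid norm estimates give the polylog bound. I would cite these directly. One further point needs care: $p$ is only rational, so the trace of $w_a\xi$ must be computed on a common resolution and then pushed forward. Since everything is compared through $L^1$ norms on smooth fibers over $V$, the indeterminacy locus in the central fibers does not affect the estimates.
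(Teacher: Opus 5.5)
Your proof is correct, but it takes a transcendental route where the paper's is algebraic.

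The paper extends $\lambda$ by treating the square root $\alpha$ as a rational section of $\omega_{\mathcal Y/B'}\otimes g^*\mathcal L^{-1}$ on the normal surface $\mathcal Y$. It then checks that $\alpha^{\otimes2}=p^\#q'$ has no poles along any prime divisor. Horizontal divisors are handled by the canonical-cover construction. For vertical divisors, the valuative criterion makes $p$ a morphism near $\eta_C$, and there are explicit computations at marked sections and nodes with $s$ as uniformizer. For $r$, the paper builds a regular map $T:g_*\omega_{\mathcal Y/B'}\to(f'\circ\varpi)_*\omega_{\mathcal Y_1/B'}$ through an equivariant resolution of $\mathcal Y\dashrightarrow\mathcal Y_1$, using that $\operatorname{Tr}_\rho$ is an isomorphism at rational double points. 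It projects onto the anti-invariant summand of the finite-duality decomposition, identifies the result with the normalized trace on $V$, and composes with $\kappa'$.

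You obtain both extensions from growth bounds instead. For $\lambda$, you combine $\|w_a\|^2=\tau\int|q_a|=O(\log 1/|t|)$ with the moderate-growth characterization of the Deligne extension, which applies since $g$ is semistable. For $r$, you use Cauchy--Schwarz together with Lemma~\ref{lem:polylog-extension}. What this buys you: it treats cusps, auxiliary points and the non-finite locus of $p$ uniformly. It also avoids the indeterminacy of $p$, the resolution and the trace formalism entirely. The cost is importing Schmid's norm estimates in both directions: polylog upper bounds for frame sections of $E$, and the fact that polylog growth excludes poles. The paper's argument never leaves coherent duality.

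Some minor corrections:
\begin{enumerate}
\item You only need the upper bound $\int_{X'_b}|q_a|=O(\log 1/|t|)$, which holds for every holomorphic section of $f'_*\omega^{\otimes2}_{\mathcal X'/B'}(S')$ by the collar computation. Do not cite \eqref{eq:cusp-L1}: it rests on the WP asymptotics of Proposition~\ref{prop:splitting}, whereas this lemma serves the abstract implication \ref{it:log-split}$\Rightarrow$\ref{it:log-teich} of Theorem~\ref{thm:log-characterization}.
\item Your ``norm-free'' second route asserts that the square of the pulled-back root is regular on $\mathcal Y$. That is exactly the paper's vertical-divisor computation, so keep the norm route as your main argument.
\item Drop $h_0$. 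It comes from Lemma~\ref{lem:metric} and hence depends on this lemma, though you never actually use it.
\item At auxiliary points where $g$ itself degenerates, frame sections of $E$ need not have bounded norm. Your polylog bound already covers this case.
\end{enumerate}
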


\begin{proof}
Keep the notation of Section~\ref{sec:cover}; in particular,
$\kappa'\beta'=\operatorname{id}_{\mathcal L^{\otimes2}}$.
In this proof, write $q'$ for the pullback of $q_1$ to
$\mathcal X'$, equivalently the section induced by $\beta'$:
\[
 q'\in H^0\!\left(\mathcal X',
 \omega_{\mathcal X'/B'}^{\otimes2}(S')\otimes
 f'^*\mathcal L^{-2}\right).
\]
The finite flat double cover
$\varpi:\mathcal Y_1\to\mathcal X'$ is the base change
of the cover defined in Section~\ref{sec:cover} by
$(\mathcal M_1,q_1)$; see \eqref{eq:diagram}.

\emph{Extension of the square root.}
The section $\lambda_V$ constructed in Lemma~\ref{lem:factor} is induced
by the square root of $q'$ on $V$. It therefore determines a rational section
\[
 \alpha\in H^0_{\mathrm{rat}}\!\left(\mathcal Y,
 \omega_{\mathcal Y/B'}\otimes g^*\mathcal L^{-1}\right),
 \qquad \alpha^{\otimes2}=p^\#q',
\]
where $p^\#q'$ denotes the rational relative quadratic
differential obtained by pulling back $q'$ along the
induced morphism of generic fibers
$p_\eta:\mathcal Y_\eta\to\mathcal X'_\eta$.
By the canonical-cover construction
\cite[\S2.1, Proposition~2.4]{BCGGM}, this root $\alpha$ is regular on
the normalized smooth generic covering curve, including above
marked simple poles. Hence $\alpha$ has no horizontal poles.

Let $C$ be a vertical prime divisor of $\mathcal Y$, with generic
point $\eta_C$. Then $A:=\mathcal O_{\mathcal Y,\eta_C}$ is a DVR,
and the local parameter $s$ of $B'$ is its uniformizer because $g$
is semistable. By properness of $f'$ and
\cite[Lemma~29.43.5, Tag~0BX7]{Stacks}, the rational map
$p:\mathcal Y\dashrightarrow\mathcal X'$ is a morphism near
$\eta_C$. Thus regular functions near $p(\eta_C)$ pull back to
$A$. If $p(\eta_C)$ lies in the relative smooth unmarked locus,
regularity of $p^\#q'$ follows from the usual pullback of relative
differentials. There remain only two local computations.

At a marked section $z=0$, trivialize $\mathcal L$ and write
$q'=c(dz)^2/z$, where $c$ is regular. In $A$ write
$p^*z=s^m v$ with $m\geq0$ and $v\in A^\times$. Since
$d_{\mathcal Y/B'}s=0$,
\[
 p^\#q'=(p^*c)\,s^m
 \frac{(d_{\mathcal Y/B'}v)^2}{v}
 \quad\text{is regular at }\eta_C.
\]
At a node of $f'$, take $xy=s^e u(s)$, $u(0)\ne0$, and
$q'=c(dx/x)^2$ with $c$ regular, using the standard local
generator of the relative dualizing sheaf. Writing
$p^*x=s^m v$ with $v\in A^\times$ gives
\[
 p^\#q'=(p^*c)
 \left(\frac{d_{\mathcal Y/B'}v}{v}\right)^2
 \quad\text{is regular at }\eta_C.
\]
Thus $p^\#q'$ has no poles along any prime divisor of
$\mathcal Y$. The equality $\alpha^{\otimes2}=p^\#q'$ shows
that the same holds for $\alpha$. Since $\mathcal Y$ is normal
and $\omega_{\mathcal Y/B'}\otimes g^*\mathcal L^{-1}$ is
invertible, the codimension-one criterion
\cite[Tag~031T]{Stacks} makes $\alpha$ a global regular
section. By adjunction it defines
\[
 \lambda:\mathcal L\longrightarrow g_*\omega_{\mathcal Y/B'}.
\]
In the nonsquare case, its generic anti-invariance under the deck
involution implies that $\lambda$ takes values in $E$; in the
square case $E=g_*\omega_{\mathcal Y/B'}$ by definition.

\emph{The nonsquare case.}
One may refer to \eqref{eq:diagram} for the covering convention. Finite duality
\cite[Remark~48.20.12, Tag~0AU5]{Stacks} and the defining quadratic algebra $\mathcal A_1$ give
\[
 (f'\circ\varpi)_*\omega_{\mathcal Y_1/B'}\simeq
 f'_*\omega_{\mathcal X'/B'}\ \oplus\
 \bigl(f'_*\omega_{\mathcal X'/B'}^{\otimes2}(S')
 \otimes\mathcal L^{-1}\bigr).
\]
Let $\operatorname{pr}_2$ denote projection onto the second
summand. Resolve the graph of the rational map
$\mathcal Y\dashrightarrow\mathcal Y_1$ equivariantly
to obtain proper birational morphisms
\[
 \mathcal Y\xleftarrow{\ \rho\ }\widetilde{\mathcal Y}
 \xrightarrow{\ \psi\ }\mathcal Y_1.
\]
Since the normal semistable surface $\mathcal Y$ has at
worst rational double points, the birational trace
\[
 \operatorname{Tr}_{\rho}:
 \rho_*\omega_{\widetilde{\mathcal Y}/B'}
 \xrightarrow{\ \sim\ }\omega_{\mathcal Y/B'}
\]
is an isomorphism
\cite[Proposition~4.6]{Cais}.
On the other hand, proper duality gives a trace map
\[
 \operatorname{Tr}_{\psi}:
 \psi_*\omega_{\widetilde{\mathcal Y}/B'}
 \longrightarrow\omega_{\mathcal Y_1/B'};
\]
see \cite[Example~48.20.11, Tag~0AU5]{Stacks}. Since $g\circ\rho=(f'\circ\varpi)\circ\psi$,
pushing these maps forward to $B'$ yields
\[
g_*\omega_{\mathcal Y/B'}
\xleftarrow[\sim]{\ g_*\operatorname{Tr}_{\rho}\ }
(g\circ\rho)_*\omega_{\widetilde{\mathcal Y}/B'}=(f'\circ\varpi\circ\psi)_*
\omega_{\widetilde{\mathcal Y}/B'}\xrightarrow{\ (f'\circ\varpi)_*
\operatorname{Tr}_{\psi}\ }
(f'\circ\varpi)_*\omega_{\mathcal Y_1/B'}.
\]
We therefore obtain a regular morphism
\[
 T:g_*\omega_{\mathcal Y/B'}
 \longrightarrow
 (f'\circ\varpi)_*\omega_{\mathcal Y_1/B'}.
\]
By the equivariance of the resolution and the
naturality of the trace maps, $T$ is
equivariant under the deck involution.
Consequently,
\begin{align}\label{eq:P}
 P:=\frac12\operatorname{pr}_2\circ T\big|_E:
 E\longrightarrow
 f'_*\omega_{\mathcal X'/B'}^{\otimes2}(S')
 \otimes\mathcal L^{-1}
\end{align}
is regular on all of $B'$.

On $V$, we identify $P|_V$ with the normalized trace in
\eqref{eq:retraction}. It suffices to work over the dense
open subset of $\mathcal X'_V$ away from $S'$ and the zeros
of $q'$, where $p_V$ is \'etale.
Choose a local generator $a$ of $\mathcal L|_V$ and a
relative coordinate $z$. With pullbacks understood, write
\[
 \beta'(a^{\otimes2})=c(dz)^2,\qquad
 w_a=\alpha\,g^*a=u\,dz,\qquad u^2=c.
\]
Here $c$ and $u$ are units. For $\xi\in E_V$, anti-invariance
therefore gives $\xi=u\,b\,dz$, with $b$ regular on the
chosen chart of $\mathcal X'_V$.
Under the finite-duality identification above, using the
frame $dz\otimes a^{-1}$, we obtain
\[
 \begin{aligned}
 \operatorname{pr}_2(T(\xi))
 &=\operatorname{Tr}_{p_V}(u\xi)
       \otimes(dz\otimes a^{-1})\\
 &=2cb(dz)^2\otimes a^{-1}\\
 &=\operatorname{Tr}_{p_V}(w_a\xi)\otimes a^{-1}.
 \end{aligned}
\]
By \eqref{eq:P}, this yields
\[
 P(\xi)=\frac12\operatorname{Tr}_{p_V}(w_a\xi)
 \otimes a^{-1}.
\]
Both sides define regular bundle maps on $V$, the right-hand
side by the descent calculation in Section~\ref{sec:cover}.
Thus the identity holds throughout $V$. Consequently,
\[
 r:=(\kappa'\otimes\operatorname{id}_{\mathcal L^{-1}})
 \circ P:E\longrightarrow\mathcal L
\]
satisfies
\[
 a\otimes r(\xi)
 =\kappa'\left(\frac12\operatorname{Tr}_{p_V}(w_a\xi)\right),
\]
so $r|_V=r_V$ by \eqref{eq:retraction}, since $\tau=2$.

\emph{The square case.}
Here $\mathcal Y=\mathcal X'$, $g=f'$, $p=\operatorname{id}$
and $E=f'_*\omega_{\mathcal X'/B'}$.
The extended root $\alpha$ is a regular section of
$\omega_{\mathcal X'/B'}\otimes f'^*\mathcal L^{-1}$.
By the projection formula, multiplication by $\alpha$,
followed by the natural inclusion
$\omega_{\mathcal X'/B'}^{\otimes2}
\hookrightarrow\omega_{\mathcal X'/B'}^{\otimes2}(S')$,
defines
\[
 P:E\longrightarrow
 f'_*\omega_{\mathcal X'/B'}^{\otimes2}(S')
 \otimes\mathcal L^{-1},\qquad P(\xi)=\alpha\xi.
\]
Define $r=(\kappa'\otimes\operatorname{id}_{\mathcal L^{-1}})
\circ P$. Since $\alpha\,f'^*a=w_a$ on $V$, this also
restricts to $r_V$, with $\tau=1$.

Both $\lambda$ and $r$ are now regular on $B'$.
Their restrictions satisfy
$r_V\lambda_V=\operatorname{id}_{\mathcal L|_V}$;
hence $r\lambda=\operatorname{id}_{\mathcal L}$ on $B'$
by density.
\end{proof}

\end{document}